\tikzset{
  LabelStyle/.style = {font = \tiny\bfseries },
  VertexStyle/.append style = { inner sep=5pt,
                                font = \tiny\bfseries},
  EdgeStyle/.append style = {->} }
\newcommand{\overundercup}[2]{{\underset{#1}{\overset{#2}{\cup}}}}
\newcommand{\oversyncprod}[1]{{\overset{#1}{\boxbackslash}}}
\newcommand{\overundersyncprod}[2]{{\underset{ #1}{\oversyncprod #2}}}
\newtheorem{theorem}{Theorem}
\theoremstyle{definition}
\newtheorem{claim}{Claim}
\begin{document}
\title
{\bf\Large Three new decompositions of graphs \\based on a vertex-removing synchronised graph product} 
\date{}
\author{\small  Antoon H. Boode \\[2mm]
\small Faculty of EEMCS, University of Twente, P.O. Box 217,\\
\small 7500 AE Enschede, The Netherlands}
\maketitle
\begin{abstract}
\noindent
Recently, we have introduced and modified two graph-decomposition theorems based on a new graph product, motivated by applications in the context of synchronising periodic real-time processes. 
This vertex-removing synchronised product (VRSP), is based on modifications of the well-known Cartesian product, and is closely related to the synchronised product due to W\"ohrle and Thomas. 
Here, we recall the definition of the VRSP and the two modified graph-decompositions and introduce three new graph-decomposition theorems. 
The first new theorem decomposes a graph with respect to the semicomplete bipartite subgraphs of the graph.
For the second new theorem, we introduce a matrix graph, which is used to decompose a graph in a manner similar to the decomposition of graphs using the Cartesian product.
In the third new theorem, we combine these two types of decomposition.
Ultimately, the goal of these graph-decomposition theorems is to come to a prime-graph decomposition.
\medskip

\noindent {\bf Keywords:} vertex-removing synchronised product, product graph, graph decomposition, synchronising processes

\noindent {\bf Mathematics Subject Classification:} 05C76, 05C51, 05C20, 94C15

\end{abstract}

\section{Introduction}\label{sec:intro}
Recently, we have introduced~\cite{dam} and modified~\cite{ejgta1} two graph-decomposition theorems based on a new graph product, motivated by applications in the context of synchronising periodic real-time processes, in particular in the field of robotics. 
More on the background, definitions and applications can be found in two conference contributions \cite{boode2014cpa, boode2013cpa}, two journal papers \cite{dam,ejgta} and the thesis of the 
author~\cite{boodethesis}. 
We repeat some of the background, definitions and theorems here for convenience, and for supplying the motivation for the research that led to the third, fourth and fifth decomposition theorem 
that we state and prove in Section~\ref{sec:decomp}. 

The decomposition of graphs is well known in the literature.
For example, a decomposition can be based on the partition of a graph into edge disjoint subgraphs. 
In our case, the decomposition is based on the contraction of a subset of the vertices of the graph, in such a manner that if $V'\subset V(G)$ is contracted giving $G'$ and $V''\subset V(G)$ is contracted giving $G''$ we have that the vertex-removing synchronised product (VRSP) of $G'$ and $G''$ is isomorphic to $G$.

The rest of the paper is organised as follows.
In the next sections, we first recall the formal graph definitions (in Section~\ref{sec:term}), the 
definition of the VRSP as well as the graph-decomposition theorems, together with other relevant terminology and notation (in Section~\ref{Terminology_products}), and the notions of graph isomorphism and contraction to labelled acyclic directed multigraphs (in Section~\ref{Terminology_morphisms}).
%
Finally, we prove (in Section~\ref{sec:decomp}) the third, fourth and fifth decomposition theorem.

\section{Terminology and notation}\label{sec:term}
We use the textbook of 
Bondy and Murty \cite{GraphTheory} for terminology and notation we do not specify here. 
Throughout, unless we specify explicitly that we consider other types of graphs, all graphs we consider are {\em labelled acyclic directed multigraphs\/}, i.e., they may have multiple arcs. Such graphs consist of a {\em vertex set\/} $V$ (representing the states of a process), an {\em arc set\/}  $A$ (representing the actions, i.e., transitions from one state to another), a set of {\em labels\/} $L$ (in our applications in fact a set of label pairs, each representing a type of action and the worst case duration of its execution), and two mappings.
The first mapping $\mu: A\rightarrow V\times V$ is an incidence function that identifies the {\em tail\/} and {\em head\/} of each arc $a\in A$. 
In particular, $\mu(a)=(u,v)$ means that the arc $a$ is directed from $u\in V$ to $v\in V$, where $tail(a)=u$ and $head(a)=v$. We also call $u$ and $v$ the {\em ends\/} of $a$. 
The second mapping $\lambda :A\rightarrow L$ assigns a label pair $\lambda(a)=(\ell(a),t(a))$ to each arc $a\in A$, where $\ell(a)$ is a string representing the (name of an) action and $t(a)$ is the {\em weight\/} of the arc $a$.
This weight $t(a)$ is a real positive number representing the worst case execution time of the action represented by $\ell(a)$.

Let $G$ denote a graph according to the above definition.
An arc $a\in A(G)$ is called an {\em in-arc\/} of $v\in V(G)$ if $head(a)=v$, and an {\em out-arc\/} of $v$ if $tail(a)=v$. The {\em in-degree\/} of $v$, denoted by $d^-(v)$, is the number of in-arcs of $v$ in $G$; the {\em out-degree\/} of $v$, denoted by $d^+(v)$, is the number of out-arcs of $v$ in $G$.
The subset of $V(G)$ consisting of vertices $v$ with $d^-(v)=0$ is called the {\em source\/} of $G$, and is denoted by $S'(G)$. The subset of $V(G)$ consisting of vertices $v$ with $d^+(v)=0$ is called the {\em sink\/} of $G$, and is denoted by $S''(G)$.

For disjoint nonempty sets $X,Y\subseteq V(G)$, $[X,Y]$ denotes the set of arcs of $G$ with one end in $X$ and one end in $Y$. If the head of the arc $a\in [X,Y]$ is in $Y$, we call $a$ a {\em forward arc\/} (of $[X,Y]$); otherwise, we call it a {\em backward arc\/}. 

The acyclicity of $G$ implies a natural ordering of the vertices into disjoint sets, as follows.
We define $S^0(G)$ to denote the set of vertices with in-degree 0 in $G$ (so $S^0(G)=S'(G)$), $S^1(G)$ the set of vertices with in-degree 0 in the graph obtained from $G$ by deleting the vertices of $S^0(G)$ and all arcs with tails in $S^0(G)$, and so on, until the final set $S^{t}(G)$ contains the remaining vertices with in-degree 0 and out-degree 0 in the remaining graph. Note that these sets are well-defined since $G$ is acyclic, and also note that $S^{t}(G)\neq S''(G)$, in general.  
If a vertex $v\in V(G)$ is in the set $S^j(G)$ in the above ordering, we say that $v$ is {\em at $level\, j$\/} in $G$.
This ordering implies that each arc $a\in A(G)$ can only have $tail(a)\in S^{j_1}(G)$ and  $head(a)\in S^{j_2}(G)$ if $j_1<j_2$.

A graph $G$ is called {\em weakly connected\/} if all pairs of distinct vertices $u$ and $v$ of $G$ are connected through a 
sequence of distinct vertices $u=v_0v_1\ldots  v_k=v$ and arcs $a_1a_2\ldots a_k$ of $G$ with $\mu(a_i) = (v_{i-1}, v_i)$ or $(v_{i},v_{i-1})$ for $i=1,2,\ldots ,k$. 
We are mainly interested in weakly connected graphs, or in the weakly connected components of a graph $G$. 
If $X\subseteq V(G)$, then the {\em subgraph of $G$ induced by $X$\/}, denoted as $G[X]$, is the graph on vertex set $X$ containing all the arcs of $G$ which have both their ends in $X$ (together with $L$, $\mu$ and $\lambda$ restricted to this subset of the arcs). If $X\subseteq V$ induces a weakly connected subgraph of $G$, but there is no set $Y\subseteq V$ such that $G[Y]$ is weakly connected and $X$ is a proper subset of $Y$, then $G[X]$ is called a {\em weakly connected component\/} of $G$. Also, the set of arcs of $G[X]$ is denoted as $A[X]$. 
If $X\subseteq A(G)$, then the {\em subgraph of $G$ arc-induced by $X$\/}, denoted as $G\{X\}$, is the graph on arc set $X$ containing all the vertices of $G$ which are an end of an arc in $X$ (together with $L$, $\mu$ and $\lambda$ restricted to this subset of the arcs). 
If $X\subseteq A$ arc-induces a weakly connected subgraph of $G$, but there is no set $Y\subseteq A$ such that $G\{Y\}$ is weakly connected and $X$ is a proper subset of $Y$, then $G\{X\}$ is called a {\em weakly connected component\/} of $G$.

%
In the sequel, throughout we omit the words weakly connected, so a component should always be understood as a weakly connected component. In contrast to the notation in the textbook of Bondy and Murty \cite{GraphTheory},
we use $\omega(G)$ to denote the number of components of a graph $G$.

We denote the components of $G$ by $G_i$, where $i$ ranges from 1 to $\omega(G)$.
In that case, we use $V_i$, $A_i$ and $L_i$ as shorthand notation for $V(G_i)$, $A(G_i)$ and $L(G_i)$, respectively.
The mappings $\mu$ and $\lambda$ have natural counterparts restricted to the subsets $A_i\subset A(G)$ that we do not specify explicitly. 
We use $G=\sum\limits_{i=1}^{\omega(G)} G_i$ to indicate that $G$ is the disjoint union of its components, implicitly defining its components as $G_1$ up to $G_{\omega(G)}$. In particular, $G=G_1$ if and only if $G$ is weakly connected itself.
Furthermore, we use $\overundercup{i=1}{\omega(G)} G_i$ to denote the graph with vertex set $\overundercup{i=1}{\omega(G)} V_i$, arc set $\overundercup{i=1}{\omega(G)} A_i$ with the mappings $\mu_i(a_i)=(u_i,v_i)$ and $\lambda(a_i)=(\ell(a_i),t(a_i))$ for each arc $a_i\in A_i$.

A subgraph $B$ of $G$ according to the above definition is called \emph{bi-partite} if there exists a partition of non-empty sets $V_1$ and $V_2$ of $V(B)$ into two partite sets (i.e., $V(B) = V_1 \cup V_2$, $V_1 \cap V_2 = \emptyset$) such that every arc of $B$ has its head vertex and tail vertex in different partite sets. Such a graph is called a \emph{bipartite subgraph}, and we denote such a bipartite subgraph of $G$ by $B(V_1, V_2)$. 
A bipartite graph $B(V_1, V_2)$ is called complete if, for every pair $x \in V_1$, $y \in V_2$, there is an arc $a$ met $\mu(a)=(x,y)$ or $\mu(a)=(y,x)$ in $B(V_1, V_2)$.
We call $B(V_1, V_2)$ a trivial bipartite graph if $|V_1|=|V_2|=1$.
A bipartite subgraph $B(V_1,V_2)$ is semicomplete if, for every pair $x \in V_1$, $y \in V_2$, an arc $xy$ is in $B(V_1,V_2)$ or an arc $yx$ is in $B(V_1,V_2)$, or for every pair $x \in V_1$, $y \in V_2$, there is no arc $xy$ in $B(V_1,V_2)$ and there is no arc $yx$ in $B(V_1,V_2)$.

If necessary, we divide $V$ into mutually disjoint subsets with a cardinality that is a prime number. 
We denote the union of mutually disjoint subsets $V_1,\ldots, V_{n}$ of $V$ with the same cardinality $p_i$ as $(V^{p_i})^{n}$. Hence, $|(V^{p_i})^{n}|=n\cdot p_i$.

In the next two sections, we recall some of the definitions that appeared in~\cite{dam}.

\section{Graph products}\label{Terminology_products}
Instead of defining products for general pairs of graphs, 
for notational reasons we find it convenient to define those products for two components $G_i$ and $G_j$ of a disconnected  graph $G$. We start with the next analogue of the Cartesian product.

The {\em Cartesian product\/} $G_i\Box G_j$ of $G_i$ and $G_j$ is defined as the graph on vertex set $V_{i,j}=V_i\times V_j$, and arc set $A_{i,j}$ consisting of two types of labelled arcs. 
For each arc $a\in A_i$ with $\mu(a)=(v_i,w_i)$, an {\em arc of type $i$\/} is introduced between tail $(v_i,v_j)\in V_{i,j}$ and head $(w_i,w_j)\in V_{i,j}$ whenever $v_j=w_j$; such an arc receives the label $\lambda(a)$. 
This implicitly defines parts of the mappings $\mu$ and $\lambda$ for $G_i \Box G_j$.
Similarly, for each arc $a\in A_j$ with $\mu(a)=(v_j,w_j)$, an {\em arc of type $j$\/} is introduced between tail $(v_i,v_j)\in V_{i,j}$ and head $(w_i,w_j)\in V_{i,j}$ whenever $v_i=w_i$; such an arc receives the label $\lambda(a)$. 
This completes the definition of $A_{i,j}$ and the  mappings $\mu$ and $\lambda$ for $G_i \Box G_j$.
So, arcs of type $i$ and $j$ correspond to arcs of $G_i$ and $G_j$, respectively, and have the associated labels.
For $k\ge 3$, the Cartesian product $G_1\Box G_2\Box \cdots\Box G_k$ is defined recursively as $((G_1\Box G_2)\Box \cdots )\Box G_k$. This Cartesian product is commutative and associative, as can be verified easily and is a well-known fact for the undirected analogue. 

Since we are particularly interested in synchronising arcs, we modify the Cartesian product $G_i\Box G_j$ according to the existence of synchronising arcs, i.e., pairs of arcs with the same label pair, with one arc in $G_i$ and one arc in $G_j$. 

The first step in this modification consists of ignoring (in fact deleting) the synchronising arcs while forming arcs in the product, but additionally combining pairs of synchronising arcs of $G_i$ and $G_j$ into one arc, yielding the {\em intermediate product\/} which we denote by $G_i \boxtimes G_j$. 
An example of the intermediate product is given in Figure~\ref{BiPartiteExampleDecomposition2}.

To be more precise, $G_i \boxtimes G_j$ is obtained from $G_i\Box G_j$ by first ignoring all except for the so-called {\em asynchronous\/} arcs, i.e., by only maintaining all arcs $a\in A_{i,j}$ for which $\mu(a)=((v_i,v_j),(w_i,w_j))$, whenever $v_j=w_j$ and $\lambda(a) \notin L_j$, as well as all arcs $a\in A_{i,j}$ for which $\mu(a)=((v_i,v_j),(w_i,w_j))$, whenever $v_i=w_i$ and $\lambda(a)\notin L_i$. 
Additionally, we add arcs that replace synchronising pairs $a_i\in A_i$ and $a_j\in A_j$ with $\lambda(a_i)=\lambda(a_j)$. If $\mu(a_i)=(v_i,w_i)$ and $\mu(a_j)=(v_j,w_j)$, such a pair is replaced by an arc $a_{i,j}$ with  $\mu(a_{i,j})=((v_i,v_j),(w_i,w_j))$ and $\lambda(a_{i,j})=\lambda(a_i)$. 
We call such arcs  of $G_i \boxtimes G_j$ {\em synchronous\/} arcs. 
The second step in this modification consists of removing (from $G_i \boxtimes G_j$) 
the 
vertices $(v_i,v_j)\in V_{i,j}$ together with the arcs $a$ with $tail(a)=(v_i,v_j)$ and the arcs $b$ with $head(b)=(v_i,v_j)$
for which $(v_i,v_j)$ has $in-degree > 0$ in $G_i\Box G_j$ but $in-degree = 0$ in $G_i\boxtimes G_j$. 
The removal of these 
vertices is then repeated in the newly obtained graph, and so on, until there are no more vertices with $in-degree = 0$ in the current graph with $in-degree > 0$ in $G_i\Box G_j$ and there are no more vertices with $out-degree = 0$ in the current graph with $out-degree > 0$ in $G_i\Box G_j$ . This finds its motivation in the fact that in our applications, the states that are represented by such vertices can never be reached, so are irrelevant. 

The resulting graph is called the {\em vertex-removing synchronised product\/} (VRSP for short) of $G_i$ and $G_j$, and denoted as $G_i \boxbackslash G_j$. 
For $k\ge 3$, the {VRSP} $G_1 \boxbackslash G_2 \boxbackslash \cdots \boxbackslash G_k$ is defined recursively as $((G_1 \boxbackslash G_2) \boxbackslash \cdots ) \boxbackslash G_k$. The VRSP is commutative, but not associative in general, in contrast to the Cartesian product. These properties are not relevant for the decomposition results that follow. 
However, for these results it is relevant to introduce counterparts of graph isomorphism and graph contraction that apply to our types of graphs. We define these counterparts in the next section.

\section{Graph isomorphism and graph contraction}\label{Terminology_morphisms}
The isomorphism we introduce in this section is an analogue of a known concept for unlabelled graphs, but involves statements on the labels.

We assume that two different arcs with the same tail and head have different labels; otherwise, we replace such multiple arcs by one arc with that label, because these arcs represent exactly the same action at the same stage of a  process.

Formally, an {\em isomorphism\/} from a graph $G$ to a graph $H$ consists of two bijections $\phi : V(G)\rightarrow V(H)$ and $\rho : A(G)\rightarrow A(H)$ such that for all $a \in A(G)$, one has $\mu(a) = (u, v)$ if and only if $\mu(\rho(a))=(\phi(u),\phi(v))$ and $\lambda(a)=\lambda(\rho(a))$. 
Since we assume that two different arcs with the same tail and head have different labels, however, the bijection $\rho$ is superfluous. The reason is that, if $(\phi,\rho)$ is an isomorphism, then $\rho$ is completely determined by $\phi$ and the labels. 
In fact, if $(\phi,\rho)$ is an isomorphism and $\mu(a)=(u,v)$ for an arc $a\in A(G)$, then $\rho(a)$ is the unique arc $b\in A(H)$ with $\mu(b)=(\phi(u),\phi(v))$ and  label $\lambda(b)=\lambda(a)$. 
Thus, we may define an isomorphism from $G$ to $H$ as a bijection $\phi : V(G)\rightarrow V(H)$ such that there exists an arc $a \in A(G)$ with $\mu(a)=(u,v)$ if and only if there exists an arc $b\in A(H)$ with $\mu(b)=(\phi(u),\phi(v))$ and $\lambda(b)=\lambda(a)$.
An isomorphism from $G$ to $H$ is denoted as $G\cong H$.

Next, we define what we mean by contraction.  
Let $X$ be a nonempty proper subset of $V(G)$, and let $Y=V(G)\setminus X$. 
By {\em contracting $X$\/} we mean replacing $X$ by a new vertex $\tilde{x}$, deleting all arcs with both ends in $X$, replacing each arc $a\in A(G)$ with $\mu(a)=(u,v)$ for $u\in X$ and $v\in Y$ by an arc $c$ with $\mu(c)=(\tilde{x},v)$ and $\lambda (c)=\lambda(a)$, and replacing each arc $b\in A(G)$ with $\mu(b)=(u,v)$ for $u\in Y$ and $v\in X$ by an arc $d$ with $\mu(d)=(u,\tilde{x})$ and $\lambda (d)=\lambda(b)$. We denote the resulting graph as $G/X$, and say that $G/X$ is the {\em contraction of $G$ with respect to $X$\/}. 
If we have a series of contractions of $G$ with respect to $X_1,\ldots,X_n$, $G/X_1/\ldots/X_n$, we denote the resulting graph as $G/_{i=1}^nX_i$.
When $X_i \cap X_j\neq \emptyset, i<j,$ then due to the contraction with respect to $X_i$ the vertices of $X_i$ are replaced by $\tilde{x}_i$ and therefore the vertices $X_i \cap X_j$ of $X_j$ are also replaced by $\tilde{x}_i$.
Hence, $X_{j}$ is a subset of the vertex set of the graph constructed by $G/X_1/\ldots/X_{j-1}$.



Finally, we recall the two decomposition theorems that were introduced in~\cite{dam} and modified in~\cite{ejgta1} (Note that if we would allow $X_2$ to be empty then in the case that $X_2$ is empty Theorem~\ref{theorem_2} is identical to Theorem~\ref{theorem_1}.).
\begin{theorem}[\cite{ejgta1}]\label{theorem_1}
Let $G$ be a graph, let $X$ be a nonempty proper subset of $V(G)$, and let $Y=V(G)\setminus X$. Suppose that each largest subset of arcs with the same label of $[X,Y]$ arc-induces a complete bipartite subgraph of $G$ and that the arcs of $G/X$ and $G/Y$ corresponding to the arcs of $[X,Y]$ are the only synchronising arcs of $G/X$ and $G/Y$. 
If $S'(G)\subseteq X$ and $[X,Y]$ has no backward arcs, then $G\cong G/Y\boxbackslash G/X$.
\end{theorem}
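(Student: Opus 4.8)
The plan is to exhibit an explicit isomorphism and then verify that the vertex-removing stage of the VRSP leaves exactly its image intact. I write $G/Y$ on vertex set $X\cup\{\tilde x_Y\}$ and $G/X$ on vertex set $Y\cup\{\tilde x_X\}$, where I abbreviate the contracted vertices by $\tilde y$ and $\tilde x$. Since $[X,Y]$ has no backward arcs, $\tilde y$ is a sink of $G/Y$ and $\tilde x$ is a source of $G/X$, and every arc of $[X,Y]$ induces one arc $u\to\tilde y$ in $G/Y$ and one arc $\tilde x\to v$ in $G/X$ carrying the same label. I define $\phi:V(G)\to V(G/Y\Box G/X)$ by $\phi(x)=(x,\tilde x)$ for $x\in X$ and $\phi(y)=(\tilde y,y)$ for $y\in Y$; I call these vertices \emph{good} and the remaining product vertices, namely the pairs $(x,y)$ with $x\in X,\,y\in Y$ together with $(\tilde y,\tilde x)$, \emph{bad}. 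First I would record, using the hypothesis that the only synchronising arcs of $G/Y$ and $G/X$ are those coming from $[X,Y]$, that in the intermediate product $G/Y\boxtimes G/X$ the surviving asynchronous arcs are exactly the arcs of $G[X]$ (acting on the first coordinate) and of $G[Y]$ (acting on the second coordinate), whereas the synchronous arcs arise exactly from synchronising pairs $u\to\tilde y$, $\tilde x\to v$.

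The structural heart of the argument is the claim that in $G/Y\boxtimes G/X$ there is no arc between a good and a bad vertex. Indeed, every arc of the Cartesian product joining a good vertex to a bad one (for instance $(x,\tilde x)\to(\tilde y,\tilde x)$ or $(x,\tilde x)\to(x,v)$) is precisely one of the synchronising arcs coming from $[X,Y]$, and such arcs are deleted on passing to the intermediate product, being consumed into synchronous arcs between two good vertices. Consequently the good vertices with their incident synchronous and asynchronous arcs form a subgraph $H$ that is a union of components of $G/Y\boxtimes G/X$, disjoint from the bad part. This is the decisive simplification: the iterative vertex removal then acts on the good part and the bad part independently.

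Next I would identify $H$ with $G$. The within-$X$ arcs send $u\to v$ in $G[X]$ to $(u,\tilde x)\to(v,\tilde x)$, the within-$Y$ arcs send $u\to v$ in $G[Y]$ to $(\tilde y,u)\to(\tilde y,v)$, and each synchronous arc $(u,\tilde x)\to(\tilde y,v)$ must be matched with an arc $u\to v$ of $[X,Y]$. Here the complete-bipartite hypothesis is essential: a synchronous arc $(u,\tilde x)\to(\tilde y,v)$ exists if and only if some $[X,Y]$-arc out of $u$ and some $[X,Y]$-arc into $v$ share a label $\lambda$, and completeness of the $\lambda$-labelled bipartite subgraph then forces the arc $u\to v$ with label $\lambda$ to lie in $[X,Y]$. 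This rules out spurious synchronous arcs produced by cross-pairing of labels and yields a label-preserving bijection between $A(H)$ and $A(G)$, so that $\phi$ is an isomorphism from $G$ onto $H$.

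It remains to show that the removal process deletes every bad vertex and no good one, so that $G/Y\boxbackslash G/X=H$. For the good vertices I would check that, for both in- and out-degree, the degree in $H$ is positive exactly when the corresponding degree in $G/Y\Box G/X$ is positive (using $S'(G)\subseteq X$ to handle the sources, which all lie in $X$); since the good part is disconnected from the bad part, no removal of a bad vertex ever lowers a good vertex's degree, hence no good vertex is ever deleted. For the bad vertices I would argue by induction on the level in $G/Y\Box G/X$: the vertex $(\tilde y,\tilde x)$ has in-degree $0$ in $G/Y\boxtimes G/X$ but positive in-degree in $G/Y\Box G/X$, so it is deleted at once; and each $(x,y)$ receives intermediate arcs only from lower-level bad vertices $(x',y)$ and $(x,y')$, so once these have been removed it has current in-degree $0$ while its Cartesian in-degree is positive (because $S'(G)\subseteq X$ forces $y$ to possess an in-arc), whence it too is deleted. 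I expect the main obstacle to be exactly this removal bookkeeping—first establishing the good/bad disconnection in the intermediate product, and then pushing the level induction through—rather than the identification of $H$ with $G$, which is essentially forced once the complete-bipartite condition is invoked.
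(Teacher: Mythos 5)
The paper does not actually prove Theorem~\ref{theorem_1}; it is recalled from~\cite{ejgta1}. Your argument follows exactly the template the paper uses for its own Theorems~\ref{theorem_5}--\ref{theorem_7}: define $\phi$, let $Z$ be its image (your ``good'' vertices), show the subgraph of the intermediate product induced by $Z$ is isomorphic to $G$, and then show every other vertex disappears. Your two key steps are sound: the good/bad disconnection holds because every Cartesian-product arc joining a good vertex to a bad one arises from a synchronising arc (an arc $u\to\tilde y$ of $G/Y$ or $\tilde x\to v$ of $G/X$) and is therefore discarded when forming $\boxtimes$; and the complete-bipartite hypothesis is used in exactly the right place, to match each synchronous arc $(u,\tilde x)\to(\tilde y,v)$ with a genuine arc $u\to v$ of $[X,Y]$ carrying that label. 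The level induction that kills the bad vertices $(x,y)$ is the same device the paper uses in the proof of Theorem~\ref{theorem_7}.

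One step you state is, as written, false: that for good vertices ``for both in- and out-degree, the degree in $H$ is positive exactly when the corresponding degree in $G/Y\Box G/X$ is positive.'' If $x\in X$ is a sink of $G$, then $(x,\tilde x)$ has out-degree $0$ in $G/Y\boxtimes G/X$ (its only Cartesian out-arcs $(x,\tilde x)\to(x,v)$ come from the synchronising arcs $\tilde x\to v$ and are deleted) but out-degree $|[X,Y]|>0$ in $G/Y\Box G/X$. Under the in-degree-based removal rule --- which is the one the paper actually applies in its own proofs and in the caption of Figure~\ref{BiPartiteExampleDecomposition1} --- this does not matter and your proof goes through; but under a literal reading of the ``until'' clause in the VRSP definition, which also mentions out-degree, such a vertex would be removed and the biconditional you invoke would not save it. You should either restrict your degree check to in-degrees, or address explicitly why sinks of $G$ lying in $X$ survive the removal stage.
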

\begin{theorem}[\cite{ejgta1}]\label{theorem_2}
Let $G$ be a graph, and let $X_1$, $X_2$ and $Y=V(G)\setminus (X_1\cup X_2)$ be three disjoint nonempty subsets of $V(G)$. Suppose that each largest subset of arcs with the same label of $[X_1,Y]$ arc-induces a complete bipartite subgraph of $G$, each largest subset of arcs with the same label of $[Y,X_2]$ arc-induces a complete bipartite subgraph of $G$, the arcs of $[X_1,X_2]$ have no labels in common with any arc in $[X_1,Y]\cup[Y,X_2]$, and the arcs of $G/X_1/X_2$ and $G/Y$ corresponding to the arcs of $[X_1,Y]\cup [Y,X_2]\cup [X_1,X_2]$ are the only synchronising arcs of $G/X_1/X_2$ and $G/Y$. 
If $S'(G)\subseteq X_1$, and $[X_1,Y]$, $[Y,X_2]$ and $[X_1,X_2]$ have no backward arcs, then $G\cong G/Y\boxbackslash G/X_1/X_2$.
\end{theorem}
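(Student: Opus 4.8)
The plan is to reduce Theorem~\ref{theorem_2} to Theorem~\ref{theorem_1}, rather than re-proving the VRSP-isomorphism from scratch. The two statements are structurally parallel: Theorem~\ref{theorem_1} splits $V(G)$ into a single ``source side'' $X$ and a ``sink side'' $Y$, whereas Theorem~\ref{theorem_2} splits off two disjoint pieces $X_1$ and $X_2$ and places $Y$ between them. My first move is therefore to set $X = X_1 \cup X_2$ and show that the hypotheses of Theorem~\ref{theorem_1} hold for this $X$ and the given $Y$. The conclusion I want, $G \cong G/Y \boxbackslash G/X_1/X_2$, is exactly $G \cong G/Y \boxbackslash G/X$ once I verify that the iterated contraction $G/X_1/X_2$ agrees with $G/X$ for $X = X_1 \cup X_2$. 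By the definition of contraction and the remark following it (contracting $X_1$ then $X_2$, with $X_1 \cap X_2 = \emptyset$, collapses all of $X_1$ to $\tilde{x}_1$ and all of $X_2$ to $\tilde{x}_2$), $G/X_1/X_2$ is \emph{not} literally $G/X$: the latter would collapse $X_1 \cup X_2$ to a single vertex. So I cannot simply invoke Theorem~\ref{theorem_1}; the genuine content is that the extra internal structure between $X_1$ and $X_2$ behaves correctly under the product.

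Consequently I would prove the statement directly, mirroring the construction of the VRSP. First I would establish the candidate bijection $\phi$ on vertices. Each vertex of $G$ lies in exactly one of $X_1$, $Y$, $X_2$; I would map $v \in Y$ to the vertex $(\tilde{x}, v)$ of $G/Y \boxbackslash G/X_1/X_2$ that pairs the contracted vertex $\tilde{x}$ of $G/Y$ with the image of $v$ in $G/X_1/X_2$, and map $v \in X_1 \cup X_2$ analogously using the image of $v$ in $G/Y$ paired with the appropriate contracted vertex of $G/X_1/X_2$. The three hypotheses on the bipartite structure ($[X_1,Y]$ and $[Y,X_2]$ each arc-inducing complete bipartite subgraphs per label) are exactly what guarantees that the synchronising-pair replacement in the intermediate product $\boxtimes$ produces one arc for each original arc of $[X_1,Y] \cup [Y,X_2]$, with no spurious or missing arcs; the condition that $[X_1,X_2]$ shares no labels with $[X_1,Y] \cup [Y,X_2]$ ensures the $X_1$--$X_2$ arcs are carried along as \emph{asynchronous} arcs of type corresponding to $G/Y$ and are not accidentally fused. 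The ``only synchronising arcs'' hypothesis rules out any extra synchronous arcs appearing in the product that have no preimage in $G$.

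The key verification is then a case analysis over arc types, checking that $\phi$ is arc-preserving in both directions in the sense of the label-sensitive isomorphism defined in Section~\ref{Terminology_morphisms}: an arc $a$ with $\mu(a)=(u,v)$ exists in $G$ iff the corresponding arc exists in the product with the matching label. I would split arcs of $G$ into those with both ends in $Y$, both ends in $X_1 \cup X_2$, and those crossing between $Y$ and $X_1 \cup X_2$ (further split into $[X_1,Y]$, $[Y,X_2]$, and $[X_1,X_2]$), and track where each lands after contraction and product formation. The orientation hypotheses ($S'(G) \subseteq X_1$ and no backward arcs in $[X_1,Y]$, $[Y,X_2]$, $[X_1,X_2]$) are what I would use to show the vertex-removal phase of the VRSP deletes \emph{nothing} relevant: every vertex of the intermediate product that corresponds to a genuine vertex of $G$ retains positive in-degree (except true sources) and positive out-degree (except true sinks), so no reachable state is pruned.

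The main obstacle I expect is precisely the vertex-removal step, since this is where the VRSP departs from a clean Cartesian-product-style argument and where associativity fails. I must argue that the vertices surviving the iterated removal in $G/Y \boxbackslash G/X_1/X_2$ are exactly the images of $V(G)$ under $\phi$ --- no more, no fewer. This requires showing that the acyclic level ordering of $G$ (via the sets $S^j(G)$) is respected by the product construction, so that a vertex removed for having in-degree $0$ in the intermediate product but positive in-degree in $G/Y \Box G/X_1/X_2$ never corresponds to a genuine vertex of $G$; the no-backward-arc and source-containment conditions are the levers that make this work, propagating the source constraint $S'(G) \subseteq X_1$ through the contractions. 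Handling the interaction of the two contractions $G/X_1/X_2$ with the single contraction $G/Y$ inside the product --- particularly confirming that the $[X_1,X_2]$ arcs reappear with correct heads and tails after both the contraction of $Y$ and the product's synchronisation phase --- is the step most prone to hidden edge cases, and is where I would concentrate the detailed checking.
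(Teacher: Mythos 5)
First, a point of reference: the paper does not actually prove Theorem~\ref{theorem_2}; it is recalled from \cite{ejgta1} without proof, so the only in-paper material to compare against is the template used for Theorems~\ref{theorem_5}--\ref{theorem_7}. That template is indeed the direct-isomorphism strategy you adopt (define $\phi$, show the subgraph of the intermediate product induced by the image of $\phi$ is isomorphic to $G$, then show the vertex-removal step deletes exactly the vertices outside that image). Your opening observation that a reduction to Theorem~\ref{theorem_1} via $X=X_1\cup X_2$ fails --- because $G/X_1/X_2$ retains two contracted vertices whereas $G/X$ would have one --- is correct, and your bijection $\phi$ is the right one.

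However, your treatment of the $[X_1,X_2]$ arcs contains a genuine error, not just loose wording. You claim these arcs are carried along as \emph{asynchronous} arcs of type corresponding to $G/Y$ and are ``not accidentally fused.'' They cannot be asynchronous: an arc $a$ with $\mu(a)=(u,v)$, $u\in X_1$, $v\in X_2$, survives in $G/Y$ as an arc from $u$ to $v$ and appears in $G/X_1/X_2$ as an arc from $\tilde{x}_1$ to $\tilde{x}_2$ with the same label, so $\lambda(a)$ lies in the label sets of \emph{both} factors. By the definition of $\boxtimes$, the single-coordinate (asynchronous) copies of such an arc are then discarded, and only the fused synchronous arc is retained; the hypothesis of Theorem~\ref{theorem_2} even lists the arcs corresponding to $[X_1,X_2]$ explicitly among the synchronising arcs. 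If your asynchronous mechanism were the one at work, the image of $a$ would run from $(u,\tilde{x}_1)$ to $(v,\tilde{x}_1)$, whose head is not $\phi(v)=(v,\tilde{x}_2)$, and the isomorphism would break. The correct statement is that the pair consisting of $u\to v$ in $G/Y$ and $\tilde{x}_1\to\tilde{x}_2$ in $G/X_1/X_2$ is fused into the single arc from $(u,\tilde{x}_1)$ to $(v,\tilde{x}_2)$, i.e.\ from $\phi(u)$ to $\phi(v)$; the label-disjointness of $[X_1,X_2]$ from $[X_1,Y]\cup[Y,X_2]$ serves only to prevent an $[X_1,X_2]$-arc of one factor from fusing with a crossing arc of the other factor and creating a spurious arc with no preimage in $G$. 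The remainder of your outline --- the case analysis on arc location and the use of $S'(G)\subseteq X_1$ together with the no-backward-arc conditions to control the vertex-removal phase --- is the right plan and matches how the analogous arguments are carried out for Theorems~\ref{theorem_5}--\ref{theorem_7}.
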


\section{The third, fourth and fifth graph-decomposition theorem.}\label{sec:decomp}
We assume that the graphs we want to decompose are connected; if not, we can apply our decomposition results to the components separately. 
We continue with presenting and proving our third decomposition theorem, given in Theorem~\ref{theorem_5}, of which an illustrative example is given in Figure~\ref{BiPartiteExampleDecomposition1}. 
In the third decomposition theorem we are going to decompose a graph $G$ that contains semicomplete bipartite subgraphs.
We continue with the decomposition of a graph $G$ where each subgraph of $G$ arc-induced by a set of all arcs with the same label in $G$ is a semicomplete bipartite subgraph $B(X_{i},Y_{j})$ of $G$.
The decomposition of $G$ consists of decomposing each semicomplete bipartite subgraph $B(X_i,Y_i)$ of $G=\overundercup{i=1}{n}B(X_i,Y_i)$ in such a manner that each $B(X_i,Y_i)$ is decomposed into two semicomplete bipartite graphs. 
We give a simple example of this decomposition in Figure~\ref{BiPartiteExampleDecomposition}, where we have nine semicomplete bipartite subgraph $B(X_i,Y_i)$ of which eight subgraphs are trivial bipartite subgraphs.
Because with respect to the VRSP a trivial bipartite subgraph $B(X,Y)$ is idempotent, $B(X,Y)\cong B(X,Y)\boxbackslash B(X,Y)$, we do not contract these subgraphs in the example depicted in Figure~\ref{BiPartiteExampleDecomposition}.
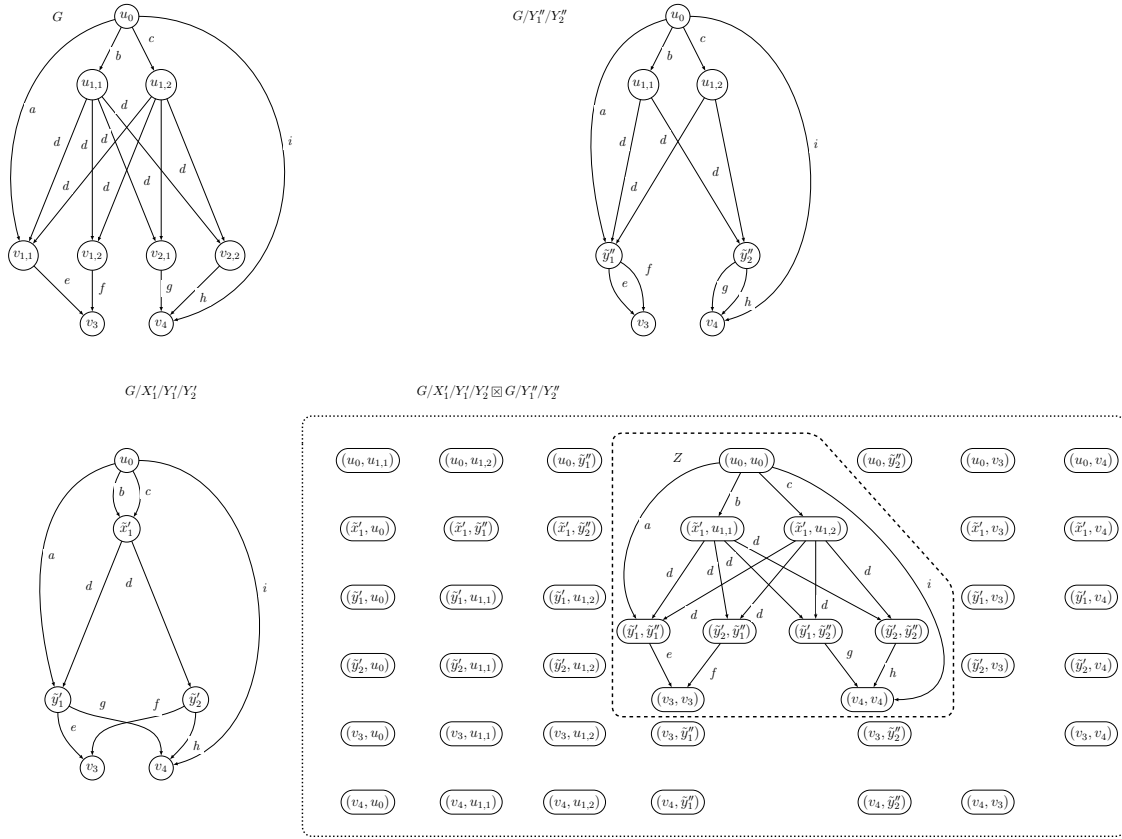
\begin{figure}[H]
\begin{center}
\resizebox{1.0\textwidth}{!}{
\begin{tikzpicture}[->,>=latex,shorten >=0pt,auto,node distance=2.5cm,
  main node/.style={circle,fill=blue!10,draw, font=\sffamily\Large\bfseries}]
  \tikzset{VertexStyle/.append style={
  font=\itshape\large, shape = circle,inner sep = 2pt, outer sep = 0pt,minimum size = 20 pt,draw}}
  \tikzset{EdgeStyle/.append style={thin}}
  \tikzset{LabelStyle/.append style={font = \itshape}}
  \SetVertexMath
  \def\x{0.0}
  \def\y{2.0}
\node at (\x-10,\y-0) {$G$};
\node at (\x+4.0,\y-0) {$G/Y''_1/Y''_2$};
\node at (\x-7.0,\y-11) {$G/X'_1/Y'_1/Y'_2$};
\node at (\x+2.5,\y-11) {$G/X'_1/Y'_1/Y'_2\boxtimes G/Y''_1/Y''_2$};
\node at (\x+8,\y-12.9) {$Z$};
  \def\x{-10.0}
  \def\y{-2.0}
  \Vertex[x=\x+2, y=\y+4.0,L={u_{0}}]{u_1}
  \Vertex[x=\x+1, y=\y+2.0,L={u_{1,1}}]{u_2}
  \Vertex[x=\x+3, y=\y+2.0,L={u_{1,2}}]{u_3}
  \Vertex[x=\x-1, y=\y-3,L={v_{1,1}}]{v_1}
  \Vertex[x=\x+1, y=\y-3,L={v_{1,2}}]{v_2}
  \Vertex[x=\x+3, y=\y-3,L={v_{2,1}}]{v_3}
  \Vertex[x=\x+5, y=\y-3,L={v_{2,2}}]{v_4}
  \Vertex[x=\x+1, y=\y-5,L={v_{3}}]{v_5}
  \Vertex[x=\x+3, y=\y-5,L={v_{4}}]{v_6}

  \Edge[label = a, labelstyle={xshift=0pt, yshift=2pt}, style={in = 105, out = 195,min distance=2cm}](u_1)(v_1) 
  \Edge[label = i, labelstyle={xshift=0pt, yshift=2pt}, style={in = 15, out = 0,min distance=5cm}](u_1)(v_6) 
  \Edge[label = b, labelstyle={xshift=0pt, yshift=2pt}](u_1)(u_2) 
  \Edge[label = c, labelstyle={xshift=0pt, yshift=2pt}](u_1)(u_3) 
  \Edge[label = d, labelstyle={xshift=-8pt, yshift=32pt}](u_2)(v_1) 
  \Edge[label = d, labelstyle={xshift=-14pt, yshift=22pt}](u_2)(v_2) 
  \Edge[label = d, labelstyle={xshift=-26pt, yshift=19pt}](u_2)(v_3) 
  \Edge[label = d, labelstyle={xshift=-38pt, yshift=47pt}](u_2)(v_4) 
 
  \Edge[label = d, labelstyle={xshift=-29pt, yshift=-5pt}](u_3)(v_1) 
  \Edge[label = d, labelstyle={xshift=-24pt, yshift=-6pt}](u_3)(v_2) 
  \Edge[label = d, labelstyle={xshift=-19pt, yshift=-8pt}](u_3)(v_3) 
  \Edge[label = d, labelstyle={xshift=-18pt, yshift=-6pt}](u_3)(v_4) 

  \Edge[label = e, labelstyle={xshift=0pt, yshift=2pt}](v_1)(v_5)
  \Edge[label = f, labelstyle={xshift=2pt, yshift=2pt}](v_2)(v_5) 
  \Edge[label = g, labelstyle={xshift=0pt, yshift=2pt}](v_3)(v_6) 
  \Edge[label = h, labelstyle={xshift=0pt, yshift=2pt}](v_4)(v_6)

   \def\x{6.0}
  \def\y{-2.0}
  \Vertex[x=\x+2, y=\y+4.0,L={u_{0}}]{u_1}
  \Vertex[x=\x+1, y=\y+2.0,L={u_{1,1}}]{u_2}
  \Vertex[x=\x+3, y=\y+2.0,L={u_{1,2}}]{u_3}
  \Vertex[x=\x-0, y=\y-3,L={\tilde{y}''_{1}}]{v_1}
  \Vertex[x=\x+4, y=\y-3,L={\tilde{y}''_{2}}]{v_3}
  \Vertex[x=\x+1, y=\y-5,L={v_{3}}]{v_5}
  \Vertex[x=\x+3, y=\y-5,L={v_{4}}]{v_6}

  \Edge[label = a, labelstyle={xshift=0pt, yshift=2pt}, style={in = 105, out = 195,min distance=2cm}](u_1)(v_1) 
  \Edge[label = i, labelstyle={xshift=0pt, yshift=2pt}, style={in = 15, out = 0,min distance=4cm}](u_1)(v_6) 
  \Edge[label = b, labelstyle={xshift=0pt, yshift=2pt}](u_1)(u_2) 
  \Edge[label = c, labelstyle={xshift=0pt, yshift=2pt}](u_1)(u_3) 
  \Edge[label = d, labelstyle={xshift=-10pt, yshift=32pt}](u_2)(v_1) 
  \Edge[label = d, labelstyle={xshift=-34pt, yshift=19pt}](u_2)(v_3) 
 
  \Edge[label = d, labelstyle={xshift=-29pt, yshift=-5pt}](u_3)(v_1) 
  \Edge[label = d, labelstyle={xshift=-19pt, yshift=-8pt}](u_3)(v_3) 

  \Edge[label = e, labelstyle={xshift=0pt, yshift=2pt}, style={in = 135, out = -90,min distance=0cm}](v_1)(v_5)
  \Edge[label = f, labelstyle={xshift=2pt, yshift=2pt}, style={in = 90, out = -30,min distance=0cm}](v_1)(v_5) 
  \Edge[label = g, labelstyle={xshift=0pt, yshift=2pt}, style={in = 90, out = 210,min distance=0cm}](v_3)(v_6) 
  \Edge[label = h, labelstyle={xshift=0pt, yshift=2pt}, style={in = 45, out = -90,min distance=0cm}](v_3)(v_6)

    \def\x{-10.0}
  \def\y{-15.0}
  \Vertex[x=\x+2, y=\y+4.0,L={u_{0}}]{u_1}
  \Vertex[x=\x+2, y=\y+2.0,L={\tilde{x}'_{1}}]{u_2}
  \Vertex[x=\x-0, y=\y-3,L={\tilde{y}'_{1}}]{v_1}
  \Vertex[x=\x+4, y=\y-3,L={\tilde{y}'_{2}}]{v_3}
  \Vertex[x=\x+1, y=\y-5,L={v_{3}}]{v_5}
  \Vertex[x=\x+3, y=\y-5,L={v_{4}}]{v_6}

  \Edge[label = a, labelstyle={xshift=0pt, yshift=2pt}, style={in = 105, out = 195,min distance=2cm}](u_1)(v_1) 
  \Edge[label = i, labelstyle={xshift=0pt, yshift=2pt}, style={in = 15, out = 0,min distance=4cm}](u_1)(v_6) 
  \Edge[label = b, labelstyle={xshift=0pt, yshift=2pt}, style={in = 120, out = -120,min distance=0cm}](u_1)(u_2) 
  \Edge[label = c, labelstyle={xshift=0pt, yshift=2pt}, style={in = 60, out = -60,min distance=0cm}](u_1)(u_2) 
  \Edge[label = d, labelstyle={xshift=-10pt, yshift=32pt}](u_2)(v_1) 
  \Edge[label = d, labelstyle={xshift=-34pt, yshift=19pt}](u_2)(v_3) 

  \Edge[label = e, labelstyle={xshift=0pt, yshift=2pt}, style={in = 135, out = -90,min distance=0cm}](v_1)(v_5)
  \Edge[label = g, labelstyle={xshift=-28pt, yshift=8pt}, style={in = 90, out = -30,min distance=0cm}](v_1)(v_6) 
  \Edge[label = f, labelstyle={xshift=20pt, yshift=24pt}, style={in = 90, out = 210,min distance=0cm}](v_3)(v_5) 
  \Edge[label = h, labelstyle={xshift=0pt, yshift=2pt}, style={in = 45, out = -90,min distance=0cm}](v_3)(v_6)

  \tikzset{VertexStyle/.append style={
  font=\itshape\large, shape = rounded rectangle, inner sep = 2pt, outer sep = 0pt,minimum size = 20 pt,draw}}

  \def\x{10.0}
  \def\y{-13.0}
  \Vertex[x=\x+0, y=\y+2.0,L={(u_0,u_0)}]{u0u0}
  \Vertex[x=\x+3-14, y=\y+2.0,L={(u_0,u_{1,1})}]{u0u11}
  \Vertex[x=\x+6-14, y=\y+2.0,L={(u_0,u_{1,2})}]{u0u12}
  \Vertex[x=\x+9-14, y=\y+2.0,L={(u_0,\tilde{y}''_{1})}]{u0y1}
  \Vertex[x=\x+12-8, y=\y+2.0,L={(u_0,\tilde{y}''_{2})}]{u0y2}
  \Vertex[x=\x+15-8, y=\y+2.0,L={(u_0,v_{3})}]{u0v3}
  \Vertex[x=\x+18-8, y=\y+2.0,L={(u_0,v_{4})}]{u0v4}
  \def\x{6.0}
  \def\y{-15.0}
  \Vertex[x=\x+0-7, y=\y+2.0,L={(\tilde{x}'_{1},u_0)}]{x1u0}
  \Vertex[x=\x+3, y=\y+2.0,L={(\tilde{x}'_{1},u_{1,1})}]{x1u11}
  \Vertex[x=\x+6, y=\y+2.0,L={(\tilde{x}'_{1},u_{1,2})}]{x1u12}
  \Vertex[x=\x+9-13, y=\y+2.0,L={(\tilde{x}'_{1},\tilde{y}''_{1})}]{x1y1}
  \Vertex[x=\x+12-13, y=\y+2.0,L={(\tilde{x}'_{1},\tilde{y}''_{2})}]{x1y2}
  \Vertex[x=\x+15-4, y=\y+2.0,L={(\tilde{x}'_{1},v_{3})}]{x1v3}
  \Vertex[x=\x+18-4, y=\y+2.0,L={(\tilde{x}'_{1},v_{4})}]{x1v4}
  \def\x{-1.0}
  \def\y{-17.0}
  \Vertex[x=\x+0, y=\y+2.0,L={(\tilde{y}'_{1},u_0)}]{y1u0}
  \Vertex[x=\x+3, y=\y+2.0,L={(\tilde{y}'_{1},u_{1,1})}]{y1u11}
  \Vertex[x=\x+6, y=\y+2.0,L={(\tilde{y}'_{1},u_{1,2})}]{y1u12}
  \Vertex[x=\x+8.0, y=\y+1.0,L={(\tilde{y}'_{1},\tilde{y}''_{1})}]{y1y1}
  \Vertex[x=\x+13.0, y=\y+1.0,L={(\tilde{y}'_{1},\tilde{y}''_{2})}]{y1y2}
  \Vertex[x=\x+18+0, y=\y+2.0,L={(\tilde{y}'_{1},v_{3})}]{y1v3}
  \Vertex[x=\x+21+0, y=\y+2.0,L={(\tilde{y}'_{1},v_{4})}]{y1v4}
  \def\x{0.5}
  \def\y{-19.0}
  \Vertex[x=\x+-1.5, y=\y+2.0,L={(\tilde{y}'_{2},u_0)}]{y2u0}
  \Vertex[x=\x+1.5, y=\y+2.0,L={(\tilde{y}'_{2},u_{1,1})}]{y2u11}
  \Vertex[x=\x+4.5, y=\y+2.0,L={(\tilde{y}'_{2},u_{1,2})}]{y2u12}
  \Vertex[x=\x+9, y=\y+3.0,L={(\tilde{y}'_{2},\tilde{y}''_{1})}]{y2y1}
  \Vertex[x=\x+14, y=\y+3.0,L={(\tilde{y}'_{2},\tilde{y}''_{2})}]{y2y2}
  \Vertex[x=\x+16.5+0, y=\y+2.0,L={(\tilde{y}'_{2},v_{3})}]{y2v3}
  \Vertex[x=\x+19.5+0, y=\y+2.0,L={(\tilde{y}'_{2},v_{4})}]{y2v4}
  \def\x{-1.0}
  \def\y{-21.0}
  \Vertex[x=\x+0, y=\y+2.0,L={(v_3,u_0)}]{v3u0}
  \Vertex[x=\x+3, y=\y+2.0,L={(v_3,u_{1,1})}]{v3u11}
  \Vertex[x=\x+6, y=\y+2.0,L={(v_3,u_{1,2})}]{v3u12}
  \Vertex[x=\x+9, y=\y+2.0,L={(v_3,\tilde{y}''_{1})}]{v3y1}
  \Vertex[x=\x+15, y=\y+2.0,L={(v_3,\tilde{y}''_{2})}]{v3y2}
  \Vertex[x=\x+9, y=\y+3,L={(v_3,v_{3})}]{v3v3}
  \Vertex[x=\x+21, y=\y+2,L={(v_3,v_{4})}]{v3v4}
  \def\x{-1.0}
  \def\y{-23.0}
  \Vertex[x=\x+0, y=\y+2.0,L={(v_4,u_0)}]{v4u0}
  \Vertex[x=\x+3, y=\y+2.0,L={(v_4,u_{1,1})}]{v4u11}
  \Vertex[x=\x+6, y=\y+2.0,L={(v_4,u_{1,2})}]{v4u12}
  \Vertex[x=\x+9, y=\y+2.0,L={(v_4,\tilde{y}''_{1})}]{v4y1}
  \Vertex[x=\x+15, y=\y+2.0,L={(v_4,\tilde{y}''_{2})}]{v4y2}
  \Vertex[x=\x+18, y=\y+2.0,L={(v_4,v_{3})}]{v4v3}
  \Vertex[x=\x+14.5, y=\y+5.0,L={(v_4,v_{4})}]{v4v4}

  \Edge[label = a, labelstyle={xshift=0pt, yshift=2pt}, style={in = 120, out = 185,min distance=2cm}](u0u0)(y1y1) 
  \Edge[label = b, labelstyle={xshift=0pt, yshift=2pt}](u0u0)(x1u11) 
  \Edge[label = i, labelstyle={xshift=0pt, yshift=2pt}, style={in = 0, out = -15,min distance=3.5cm}](u0u0)(v4v4) 
  \Edge[label = c, labelstyle={xshift=0pt, yshift=2pt}](u0u0)(x1u12) 
  \Edge[label = d, labelstyle={xshift=-14pt, yshift=14pt}](x1u11)(y1y1) 
  \Edge[label = d, labelstyle={xshift=-16pt, yshift=0pt}](x1u11)(y2y1) 
  \Edge[label = d, labelstyle={xshift=-36pt, yshift=-21pt}](x1u12)(y1y1) 
  \Edge[label = d, labelstyle={xshift=-18pt, yshift=-20pt}](x1u12)(y2y1) 
  \Edge[label = d, labelstyle={xshift=-50pt, yshift=26pt}](x1u11)(y2y2) 
  \Edge[label = d, labelstyle={xshift=-36pt, yshift=8pt}](x1u11)(y1y2) 
  \Edge[label = d, labelstyle={xshift=0pt, yshift=0pt}](x1u12)(y2y2) 
  \Edge[label = d, labelstyle={xshift=0pt, yshift=-20pt}](x1u12)(y1y2) 
  \Edge[label = e, labelstyle={xshift=0pt, yshift=2pt}](y1y1)(v3v3)
  \Edge[label = f, labelstyle={xshift=2pt, yshift=2pt}](y2y1)(v3v3) 
  \Edge[label = g, labelstyle={xshift=0pt, yshift=2pt}](y1y2)(v4v4) 
  \Edge[label = h, labelstyle={xshift=0pt, yshift=2pt}](y2y2)(v4v4)

  \def\x{6}
  \def\y{-12.0}
\draw[circle, -,dashed, very thick,rounded corners=8pt] (\x+0.1,\y+1.0)--(\x+0.1,\y+1.8)-- (\x+6.0,\y+1.8)-- (\x+10.0,\y-2.7)--(\x+10.0,\y-6.5)-- (\x+1.5,\y-6.5)-- (\x+0.1,\y-6.5) -- (\x+0.1,\y-0.5)--(\x+0.1,\y+1.0);

  \def\x{-3}
  \def\y{-11.5}
  \draw[circle, -,dotted, very thick,rounded corners=8pt] (\x+0.1,\y+1.0)--(\x+0.1,\y+1.8)-- (\x+24.0,\y+1.8)-- (\x+24.0,\y-10.5)-- (\x+1.5,\y-10.5)-- (\x+0.1,\y-10.5) -- (\x+0.1,\y-0.5)--(\x+0.1,\y+1.0);
%
%
\end{tikzpicture}
}
\end{center}
\caption{Decomposition of $G\cong G/X'_1/Y'_1/Y'_2\boxbackslash G/Y''_1/Y''_2$, $X'_1=\{u_{1,1},u_{1,2}\},Y'_1=\{v_{1,1},v_{2,1}\},Y'_2=\{v_{1,2},v_{2,2}\},Y''_1=\{v_{1,1},v_{1,2}\},Y''_2=\{v_{2,1},v_{2,2}\}$. The set $Z$ from the proof of Theorem~\ref{theorem_5} and the graph isomorphic to $G$ induced by $Z$ in $G/X'_1/Y'_1/Y'_2\boxtimes G/Y''_1/Y''_2$ is indicated within the dotted region.}
  \label{BiPartiteExampleDecomposition}
\end{figure}

To decompose a graph with respect to the decomposition of a non-trivial semicomplete bipartite subgraph of $G$ where each arc has the same label we have to decompose each of these non-trivial bipartite semicomplete subgraphs of $G$.
This is obvious, because if one of these subgraphs is not decomposed, say $B(X_1,X_2)$, the VRSP of the two decompositions $H$ and $I$ of $G$ will contain $B(X_1,X_2)\boxbackslash B(X_1,X_2)$. 
This subgraph has $|X_1|^2+|X_2|^2$ vertices in $H \boxbackslash I$ and therefore, $G\ncong H \boxbackslash  I$ for $|X_1|>1$ or $|X_2|>1$.
As mentioned before, if a subgraph induced by the set of all arcs with the same label in $G$ is a trivial bipartite subgraph $B(X_1,X_2)$, then this subgraph does not have to be decomposed because $B(X_1,X_2)\boxbackslash B(X_1,X_2)\cong B(X_1,X_2)$.
But in the proof of Theorem~\ref{theorem_5}, we decompose all semicomplete bipartite subgraphs of $G$.

For reasons we will clarify in Theorems~\ref{theorem_5} and~\ref{theorem_6}, we introduce the \emph{matrix graph}, the \emph{bipartite matrix graph} and the \emph{Cartesian matrix graph}.


We define $M$ as a two-dimensional index set with pairs of indices that are numbered in the following manner: $M=\{(i,j)\mid i\in I= \{1,\ldots,m\}, j\in J= \{1,\ldots,n\}\}$.
A graph $G$ of which the vertices are numbered according to the index set $M$ has sets of rows $R_i=\{v_{(i,j)}\mid j\in J\},i\in I$, and sets of columns $C_j=\{v_{(i,j)}\mid i\in I\},j\in J$.
For brevity, in the sequel we denote the vertices $v_{(i,j)}$ as $v_{i,j}$.

For a subgraph $G[X]$ of a graph $G$, we call $X$ a grid of vertices when the vertices of $X$ are numbered in the following manner. 
The vertices $v_{i,j}\in X$ are numbered such that $i\in I_X\subseteq I$ and $j\in J_X\subseteq J$, $|X|=|I_X|\cdot|J_X|=m_1\cdot n_1$, $1\leq m_1\leq m,1\leq n_1\leq n$.
Hence, $X=\{v_{i,j}\mid i\in I_{X}\subseteq I,j\in J_{X}\subseteq J\}$, $|I_{X}|=m_1,|J_{X}|=n_1$, with rows $X'_{i}\subseteq R_i,X'_{i}=\{v_{i,j}|j\in J_{X}\},i\in I_{X},$ and with columns $X''_{j}\subseteq C_j$, $X''_{j}=\{v_{i,j}|i\in I_{X}\},j\in J_{X}$.
In the example given in Figure~\ref{BiPartiteExampleDecomposition1}, each of the sets $X_1,\ldots,X_4$ is a grid.


A \emph{matrix graph} $G$ is a graph $G$ for which the vertices are numbered according to a subset $M'$ of the index set $M$.

A \emph{bipartite matrix graph} $G$ is a matrix graph $G$ consisting solely of $x$ bipartite subgraphs where each bipartite subgraph has arcs with identical labels and each pair of such bipartite subgraphs do not share a label.
Therefore, we require, firstly, that the bipartite matrix graph $G$ is a matrix graph consisting of $x$ bipartite subgraphs $B(X_i,X_j)$ and $z$ not necessarily disjunct sets $X_k$ where $X_k=X_i$ or $X_k=X_j$ and $z\leq 2x$.
Secondly, all subgraphs of $G$ arc-induced by a set of all arcs with identical labels are semicomplete bipartite subgraphs $B(X_i,X_j)$ of $G$, all $X_i$ and $X_j$ are grids of vertices, $i,j\in\chi=\{1,\ldots,z\},i\neq j$, and $[X_i,X_j]$ contains only forward arcs or $[X_i,X_j]$ contains only backward arcs.
Thirdly, we require that whenever a row $X'_{k,x}$ of the set $X_k$ and a row $X'_{l,y}$ of the set $X_l$ share a vertex $v_{i,j}$ then $X'_{k,x}\subseteq R_i$ and $X'_{l,y}\subseteq R_i$, $k,l\in\chi$.
Fourthly, let $R'_i\subseteq V(G)\subseteq R_i$. Then for any division of $R'_i$ into the sets $R'_{i_1}$ and $R'_{i_2}$, $R'_i=R'_{i_1}\cup R'_{i_2}$, there is always a row $X'_{k,x}\subseteq R_{i_1}$ and a row $X'_{l,y}\subseteq R'_{i_2}$ with $X'_{k,x}\cap X'_{l,y}\neq \emptyset$.
Fifthly, we require that whenever a column $X''_{k,x}$ of the set $X_k$ and a column $X''_{l,y}$ of the set $X_l$ share a vertex $v_{i,j}$ then $X''_{k,x}\subseteq C_j$ and $X''_{l,y}\subseteq C_j$, $k,l\in\chi$.
Sixthly, let $C'_j\subseteq V(G)\subseteq C_j$. Then for any division of $C'_j$ into the sets $C'_{j_1}$ and $C'_{j_2}$, $C'_j=C'_{j_1}\cup C'_{j_2}$, there is always a column $X''_{k,x}\subseteq C'_{j_1}$ and a column $X''_{l,y}\subseteq C'_{j_2}$ with $X''_{k,x}\cap X''_{l,y}\neq \emptyset$.
We call a graph $G$ that fulfils these six requirements a \emph{bipartite matrix graph}.

The purpose of the bipartite matrix graph is that after the decomposition of any subgraph $B(X_i,X_j)$ of the bipartite matrix graph $G$, into graphs $B(X'_i,X'_j)$ and $B(X''_i,X''_j)$ with $B(X_i,X_j)\cong B(X'_i,X'_j)\boxbackslash B(X''_i,X''_j)$ by Theorem~\ref{theorem_5}, we have that all vertices $v_{i,x}\in V(B(X_i,X_j))$ are replaced by the vertex $\tilde{x}_i\in V(B(X'_i,X'_j))$ and all vertices $v_{x,j}\in V(B(X_i,X_j))$ are replaced by the vertex $\tilde{x}_j\in V(B(X''_i,X''_j))$.
With the third and fourth requirement, we assure that all vertices in the rows of $R_i$ must have the same first index and vertices not in the rows of $R_i$ have a different first index.
With the fifth and sixth requirement, we assure that all vertices in the columns of $C_j$ must have the same second index and vertices not in the columns of $C_j$ have a different second index.

A \emph{Cartesian matrix graph} $G$ is a matrix graph with rows $R_i, i\in I_i\subseteq I$ and columns $C_j, j\in J_j\subseteq J$, for which $G[R_x]\cong G[R_y],x,y\in I_i$, $G[C_{x}]\cong  G[C_{y}],x,y\in J_j$, and the arcs of $G[R_i]$ and the arcs of $G[C_j]$ have no labels in common, if $a$ is an arc of $A(G)$ with $\mu(a)=uv$ then $u,v\in R_i$ or $u,v\in C_j$.

In Figure~\ref{BiPartiteExampleDecomposition1}, we have depicted the vertex sets $X_i$ of the bipartite matrix graph $G$ comprising the bipartite semicomplete subgraphs $B(X_i,X_4)$ for $i=1,\ldots,3,$ where the labels of the arcs of $B(X_i,X_4)$ are the same and the labels of the arcs of $B(X_i,X_4)$ and $B(X_j,X_4), i\neq j$, are different.
All vertex sets $X_i$ are grids.
The arcs connected to the dotted box, dashdotted box and straight boxes are connected to the vertices these boxes contain. 
For example, the straight arcs with label $d$ connected to the boxes of vertex set $X_1$ represent the arc set $\{u_{2,2}u_{7,7},u_{2,4}u_{7,7},u_{2,5}u_{7,7},u_{5,2}u_{7,7},u_{5,4}u_{7,7},u_{5,5}u_{7,7},u_{6,2}u_{7,7},u_{6,4}u_{7,7}$, $u_{6,5}u_{7,7}\}$ of arcs with label $c$.  
Furthermore, due to the contraction of the second row of $X_2$ the vertices $u_{2,1},\ldots,u_{2,4}$ are replaced by $\tilde{x}_2$, which gives a new first row of $X_1$ consisting of the vertices $\tilde{x}_2$ and $u_{2,5}$. 
Later on, by contraction of the first row of $X_1$, the vertices $\tilde{x}_2$ and $u_{2,5}$ are replaced by $\tilde{x}_2$.
In Figure~\ref{BiPartiteExampleDecomposition2}, we have depicted the graph $G/_{i=1}^3X'_{1,i}/_{i=1}^4X'_{2,i}/_{i=1}^4X'_{3,i}$ $/X'_{4,1}\boxtimes G/_{i=1}^3X''_{1,i}/_{i=1}^5X''_{2,i}/_{i=1}^3X''_{3,i}/X''_{4,1}$ which is isomorphic to the graph $G$ of Figure~\ref{BiPartiteExampleDecomposition1} after deletion of the vertices with in-degree zero in $G/_{i=1}^3X'_{1,i}/_{i=1}^4X'_{2,i}/_{i=1}^4X'_{3,i}$ $/X'_{4,1}\boxtimes G/_{i=1}^3X''_{1,i}/_{i=1}^5X''_{2,i}/_{i=1}^3X''_{3,i}/X''_{4,1}$ and in-degree greater than zero in $G/_{i=1}^3X'_{1,i}/_{i=1}^4X'_{2,i}/_{i=1}^4X'_{3,i}$ $/X'_{4,1}\Box G/_{i=1}^3X''_{1,i}/_{i=1}^5X''_{2,i}/_{i=1}^3X''_{3,i}/X''_{4,1}$.
Furthermore, because the pairwise intersection of the grids $X_1,X_2$ and $X_3$ are grids, the graph $G$ is isomorphic to the graph $G/_{i=1}^3X'_{1,i}/_{i=1}^4X'_{2,i}$ $/_{i=1}^4X'_{3,i}/X'_{4,1}\boxbackslash G/_{i=1}^3X''_{1,i}/_{i=1}^5X''_{2,i}/_{i=1}^3X''_{3,i}/X''_{4,1}$, which we will prove in Theorem~\ref{theorem_5}.
Due to the numbering scheme of the vertices in $V(G)$ we have that $G/_{i=1}^3X'_{1,i}/_{i=1}^4X'_{2,i}/_{i=1}^4X'_{3,i}/X'_{4,1}\boxbackslash G/_{i=1}^3X''_{1,i}/_{i=1}^5X''_{2,i}/_{i=1}^3X''_{3,i}/X''_{4,1}\cong G/_{i=1}^7R_{i}\boxbackslash G/_{i=1}^7C_{i}\cong G$.
In Theorem~\ref{theorem_5}, we use the notation with the sets $X_i$ and in Theorem~\ref{theorem_6}, we use the notation with the rows $R_i$ and the columns $C_i$.

\begin{figure}[H]
\begin{center}
\resizebox{1\textwidth}{!}{
\begin{tikzpicture}[->,>=latex,shorten >=0pt,auto,node distance=2.5cm,
  main node/.style={circle,fill=blue!10,draw, font=\sffamily\Large\bfseries}]
  \tikzset{VertexStyle/.append style={
  font=\itshape\large, shape = circle,inner sep = 2pt, outer sep = 0pt,minimum size = 20 pt,draw}}
  \tikzset{EdgeStyle/.append style={thin}}
  \tikzset{LabelStyle/.append style={font = \itshape}}
  \SetVertexMath

  \clip (-2,2) rectangle (18, -19.0);

  \def\x{0.0}
  \def\y{2.0}
\node at (\x-1.0,\y-0.75) {$G$};
  \def\x{0.0}
  \def\y{0.0}
  \Vertex[x=\x+0, y=\y+0,L={u_{1,1}}]{u_00}
  \Vertex[x=\x+2, y=\y+0,L={u_{1,2}}]{u_01}
  \Vertex[x=\x+4, y=\y+0,L={u_{1,3}}]{u_02}
  \Vertex[x=\x+6, y=\y+0,L={u_{1,4}}]{u_03}
  \def\y{-2.0}
  \Vertex[x=\x+0, y=\y+0,L={u_{2,1}}]{u_10}
  \Vertex[x=\x+2, y=\y+0,L={u_{2,2}}]{u_11}
  \Vertex[x=\x+4, y=\y+0,L={u_{2,3}}]{u_12}
  \Vertex[x=\x+6, y=\y+0,L={u_{2,4}}]{u_13}
  \Vertex[x=\x+8, y=\y+0,L={u_{2,5}}]{u_14}
  \def\y{-4.0}
  \Vertex[x=\x+0, y=\y+0,L={u_{3,1}}]{u_20}
  \Vertex[x=\x+2, y=\y+0,L={u_{3,2}}]{u_21}
  \Vertex[x=\x+4, y=\y+0,L={u_{3,3}}]{u_22}
  \Vertex[x=\x+6, y=\y+0,L={u_{3,4}}]{u_23}
  \def\y{-6.0}
  \Vertex[x=\x+0, y=\y+0,L={u_{4,1}}]{u_30}
  \Vertex[x=\x+2, y=\y+0,L={u_{4,2}}]{u_31}
  \Vertex[x=\x+4, y=\y+0,L={u_{4,3}}]{u_32}
  \Vertex[x=\x+6, y=\y+0,L={u_{4,4}}]{u_33}
  \Vertex[x=\x+10, y=\y+0,L={u_{4,6}}]{u_35}
  \def\y{-8.0}
  \Vertex[x=\x+0, y=\y+0,L={u_{5,1}}]{u_40}
  \Vertex[x=\x+2, y=\y+0,L={u_{5,2}}]{u_41}
  \Vertex[x=\x+4, y=\y+0,L={u_{5,3}}]{u_42}
  \Vertex[x=\x+6, y=\y+0,L={u_{5,4}}]{u_43}
  \Vertex[x=\x+8, y=\y+0,L={u_{5,5}}]{u_44}
  \Vertex[x=\x+10, y=\y+0,L={u_{5,6}}]{u_45}
  \def\y{-10.0}
  \Vertex[x=\x+2, y=\y+0,L={u_{6,2}}]{u_51}
  \Vertex[x=\x+4, y=\y+0,L={u_{6,3}}]{u_52}
  \Vertex[x=\x+6, y=\y+0,L={u_{6,4}}]{u_53}
  \Vertex[x=\x+8, y=\y+0,L={u_{6,5}}]{u_54}
  \Vertex[x=\x+10, y=\y+0,L={u_{6,6}}]{u_55}

  \def\y{-13.0-2}
  \Vertex[x=\x+0, y=\y+0,L={\tilde{x}''_{1}}]{x_0}
  \Vertex[x=\x+2, y=\y+0,L={\tilde{x}''_{2}}]{x_1}
  \Vertex[x=\x+4, y=\y+0,L={\tilde{x}''_{3}}]{x_2}
  \Vertex[x=\x+6, y=\y+0,L={\tilde{x}''_{4}}]{x_3}
  \Vertex[x=\x+8, y=\y+0,L={\tilde{x}''_{5}}]{x_4}
  \Vertex[x=\x+10, y=\y+0,L={\tilde{x}''_{6}}]{x_5}

  \def\x{2}
  \def\y{-10.0}
  \Vertex[x=\x+12, y=\y+0,L={\tilde{x}'_{6}}]{y_5}
  \Vertex[x=\x+12, y=\y+2,L={\tilde{x}'_{5}}]{y_4}
  \Vertex[x=\x+12, y=\y+4,L={\tilde{x}'_{4}}]{y_3}
  \Vertex[x=\x+12, y=\y+6,L={\tilde{x}'_{3}}]{y_2}
  \Vertex[x=\x+12, y=\y+8,L={\tilde{x}'_{2}}]{y_1}
  \Vertex[x=\x+12, y=\y+10,L={\tilde{x}'_{1}}]{y_0}
  
\def\x{-4}
\def\y{-6.0}
\Vertex[x=\x+16, y=\y-6,L={u_{7,7}}]{v_0}
\def\x{16-2}
\def\y{-5.0-7}
\Vertex[x=\x, y=\y,L={\tilde{x}_{7}}]{y'_0}
\def\x{5+7}
\def\y{-15.0+2}
\Vertex[x=\x, y=\y-2,L={\tilde{x}_{7}}]{y'_1}

\def\x{-4}
\def\y{-6.0}	
\Edge[label = a, labelstyle={xshift=0pt, yshift=-2pt}, style={very thick, dashdotted, in = 60, out = 30,min distance=5cm}](\x+5.2+4.75,\y+1.0)(v_0)
\Edge[label = a, labelstyle={xshift=0pt, yshift=-2pt}, style={very thick, dashdotted, in = 90, out = 0,min distance=2cm}](\x+13.2+1.75,\y-0.5)(v_0)
\Edge[label = b, labelstyle={xshift=0pt, yshift=-0pt}, style={thick,dotted, in = 210, out = 270,min distance=5.5cm}](\x+4.4,\y-3.1)(v_0) 
\Edge[label = c, labelstyle={xshift=0+4pt, yshift=-4pt}, style={thick, in = 60, out = 20,min distance=8cm}](\x+5.5+1.1,\y+4.8)(v_0) 
\Edge[label = c, labelstyle={xshift=-20pt, yshift=12pt}, style={thick, in = 75, out = 0,min distance=1cm}](\x+9.5+3.4,\y+4.8-0.4)(v_0)
\Edge[label = c, labelstyle={xshift=8pt, yshift=+6pt}, style={thick, in = 195, out = 300,min distance=-2cm}](\x+5.5+0.5,\y+4.8-9.6-0.2)(v_0) 
\Edge[label = c, labelstyle={xshift=-28pt, yshift=10pt}, style={thick, in = 180, out = 300,min distance=-2cm}](\x+9.5+2,\y+4.8-9.6-0.2)(v_0)

\def\x{-4}
\def\y{-6.0}	
\Edge[label = \{b\}, labelstyle={xshift=-185pt, yshift=70pt}, style={in = 270, out = 270,min distance=2cm}](x_0)(y'_1) 
\Edge[label = \{a\textsf{,}b\textsf{,}c\}, labelstyle={xshift=-166pt, yshift=53pt}, style={in = 270, out = 270,min distance=2cm}](x_1)(y'_1)
\Edge[label = \{a\textsf{,}b\}, labelstyle={xshift=-130pt, yshift=37pt}, style={in = 270, out = 270}](x_2)(y'_1) 
\Edge[label = \{a\textsf{,}b\textsf{,}c\}, labelstyle={xshift=-105pt, yshift=20pt}, style={in = 270, out = 270}](x_3)(y'_1) 
\Edge[label = \{c\}, labelstyle={xshift=-68pt, yshift=13pt}, style={in = 270, out = 270,min distance=2cm}](x_4)(y'_1) 
\Edge[label = \{a\}, labelstyle={xshift=-40pt, yshift=-8pt}, style={in = 270, out = 270,min distance=1cm}](x_5)(y'_1) 

\def\x{-4}
\def\y{-6.0}	
\Edge[label = \{b\}, labelstyle={xshift=-85pt, yshift=180pt}, style={in = 00, out = 0,min distance=2cm}](y_0)(y'_0) 
\Edge[label = \{b\textsf{,}c\}, labelstyle={xshift=-70pt, yshift=160pt}, style={in = 0, out = 0,min distance=2cm}](y_1)(y'_0)
\Edge[label = \{b\}, labelstyle={xshift=-50pt, yshift=130pt}, style={in = 0, out = 0}](y_2)(y'_0) 
\Edge[label = \{a\textsf{,}b\}, labelstyle={xshift=-35pt, yshift=100pt}, style={in = 0, out = 0}](y_3)(y'_0) 
\Edge[label = \{a\textsf{,}b\textsf{,}c\}, labelstyle={xshift=-40pt, yshift=67pt}, style={in = 0, out = 0,min distance=2cm}](y_4)(y'_0) 
\Edge[label = \{a\textsf{,}c\}, labelstyle={xshift=-20pt, yshift=45pt}, style={in = 0, out = 0,min distance=1cm}](y_5)(y'_0) 

  \def\x{0}
  \def\y{-1.0}
\draw[circle, -,dotted, very thick,rounded corners=8pt] (\x+0.2,\y+2)--(\x+6.4,\y+2) --(\x+6.9,\y+1.5) -- (\x+6.9,\y-7.5)-- (\x+6.4,\y-8.0) -- (\x-0.3,\y-8.0) -- (\x-0.8,\y-7.5) -- (\x-0.8,\y+1.5) -- (\x-0.3,\y+2)--(\x+0.2,\y+2);
\draw[circle, -,dotted, very thick,rounded corners=8pt] (\x+0.0,\y-8)  -- (\x-1,\y-9.0)--(\x-1.5,\y-9.0);
\node at (\x-1.8,\y-9) {$X_2$};
  \def\x{1.9}
  \def\y{-5.0}
\draw[circle, -, very thick,rounded corners=8pt] (\x+0.4,\y+2)  -- (\x+6,\y-0.0)--(\x+6.5,\y-0.0);
\draw[circle, -, very thick,rounded corners=8pt] (\x+0.4,\y-2)  -- (\x+6,\y-0.0)--(\x+6.5,\y-0.0);
\draw[circle, -, very thick,rounded corners=8pt] (\x+5.4,\y+2)  -- (\x+6,\y-0.0)--(\x+6.5,\y-0.0);
\draw[circle, -, very thick,rounded corners=8pt] (\x+5.4,\y-2)  -- (\x+6,\y-0.0)--(\x+6.5,\y-0.0);
\node at (\x+6.9,\y-0) {$X_1$};
  \def\x{2}
  \def\y{-3.0}
\draw[circle, -, very thick,rounded corners=8pt] (\x+0.2,\y+2)--(\x+0.4,\y+2) --(\x+0.9,\y+1.5) -- (\x+0.9,\y+0.5)-- (\x+0.4,\y-0.0) -- (\x-0.3,\y-0.0) -- (\x-0.8,\y+0.5) -- (\x-0.8,\y+1.5) -- (\x-0.3,\y+2)--(\x+0.2,\y+2);
  \def\x{6}
  \def\y{-3.0}
\draw[circle, -, very thick,rounded corners=8pt] (\x+0.2,\y+2)--(\x+2.4,\y+2) --(\x+2.9,\y+1.5) -- (\x+2.9,\y+0.5)-- (\x+2.4,\y-0.0) -- (\x-0.3,\y-0.0) -- (\x-0.8,\y+0.5) -- (\x-0.8,\y+1.5) -- (\x-0.3,\y+2)--(\x+0.2,\y+2);
  \def\x{2}
  \def\y{-9.0}
\draw[circle, -, very thick,rounded corners=8pt] (\x+0.2,\y+2)--(\x+0.4,\y+2) --(\x+0.9,\y+1.5) -- (\x+0.9,\y-1.5)-- (\x+0.4,\y-2.0) -- (\x-0.3,\y-2.0) -- (\x-0.8,\y-1.5) -- (\x-0.8,\y+1.5) -- (\x-0.3,\y+2)--(\x+0.2,\y+2);
  \def\x{6}
  \def\y{-9.0}
\draw[circle, -, very thick,rounded corners=8pt] (\x+0.2,\y+2)--(\x+2.4,\y+2) --(\x+2.9,\y+1.5) -- (\x+2.9,\y-1.5)-- (\x+2.4,\y-2.0) -- (\x-0.3,\y-2.0) -- (\x-0.8,\y-1.5) -- (\x-0.8,\y+1.5) -- (\x-0.3,\y+2)--(\x+0.2,\y+2);
  \def\x{1.9}
  \def\y{-7.0}
  \draw[circle, -,dashdotted, very thick,rounded corners=8pt] (\x+0.2,\y+2)--(\x+4.5,\y+2) --(\x+5,\y+1.5) -- (\x+5,\y-4)-- (\x+4.5,\y-4.5) -- (\x-0.3,\y-4.5) -- (\x-0.8,\y-4) -- (\x-0.8,\y+1.5) -- (\x-0.3,\y+2)--(\x+0.2,\y+2);
    \draw[circle, -,dashdotted, very thick,rounded corners=8pt] (\x+8.2,\y+2)--(\x+8.5,\y+2) --(\x+9,\y+1.5) -- (\x+9,\y-4)-- (\x+8.5,\y-4.5) -- (\x+8-0.3,\y-4.5) -- (\x+8-0.8,\y-4) -- (\x+8-0.8,\y+1.5) -- (\x+8-0.3,\y+2)--(\x+8.2,\y+2);
\def\x{1.6}
\def\y{-1.0}
    \draw[circle, -,dashdotted, very thick,rounded corners=8pt] (\x-0.0,\y-10.5)  -- (\x-1.5,\y-10.75)--(\x-3.0,\y-10.9);
        \draw[circle, -,dashdotted, very thick,rounded corners=8pt] (\x+8.0,\y-10.5)  -- (\x-0.0,\y-11.1)--(\x-3.0,\y-10.9);

\node at (\x-3.3,\y-11) {$X_3$};
\def\x{2}
\def\y{-7.0}

  \def\x{-0.1+12}
  \def\y{-0.9-12}
  \draw[circle, -,dashed, very thick,rounded corners=8pt] (\x+0.2,\y+2)--(\x+0.6,\y+2) --(\x+1.1,\y+1.5) -- (\x+1.1,\y+0.3)-- (\x+0.6,\y-0.2) -- (\x-0.3,\y-0.2) -- (\x-0.8,\y+0.3) -- (\x-0.8,\y+1.5) -- (\x-0.3,\y+2)--(\x+0.2,\y+2);
    \def\y{-0.9-11.5}
  \draw[circle, -,dashed, very thick,rounded corners=8pt] (\x-0.5,\y-0.5)  -- (\x-1,\y-1.0)--(\x-1.5,\y-1.0);
\node at (\x-1.8,\y-1) {$X_4$};

  \def\x{17}
  \def\y{-5}
  \node at (\x-1.8,\y-11) {$G/_{i=1}^3X_{1,i}/_{i=1}^4X_{2,i}/_{i=1}^4X_{3,i}/X_{4,1}$};
    \def\x{17}
  \def\y{12.5}
  \node at (\x-1.8,\y-11.25) {$G/_{i=1}^3X_{1,i}/_{i=1}^5X_{2,i}/_{i=1}^3X_{3,i}/X_{4,1}$};
\end{tikzpicture}
}
\end{center}
\caption{The decomposition of the graph $G$ into the graphs $G/_{i=1}^3X'_{1,i}/_{i=1}^4X'_{2,i}$ $/_{i=1}^4X'_{3,i}/X'_{4,1}$ and $G/_{i=1}^3X''_{1,i}/_{i=1}^5X''_{2,i}/_{i=1}^3X''_{3,i}/X''_{4,1}$, with $G\cong G/_{i=1}^3X'_{1,i}/_{i=1}^4X'_{2,i}/_{i=1}^4X'_{3,i}$ $/X'_{4,1}\boxbackslash G/_{i=1}^3X''_{1,i}/_{i=1}^5X''_{2,i}/_{i=1}^3X''_{3,i}/X''_{4,1}$.}
  \label{BiPartiteExampleDecomposition1}
\end{figure}

\begin{figure}[H]
\begin{center}
\resizebox{1\textwidth}{!}{
\begin{tikzpicture}[->,>=latex,shorten >=0pt,auto,node distance=2.5cm,
  main node/.style={circle,fill=blue!10,draw, font=\sffamily\Large\bfseries}]
  \tikzset{VertexStyle/.append style={
  font=\itshape\large, shape = circle,inner sep = 2pt, outer sep = 0pt,minimum size = 20 pt,draw}}
  \tikzset{EdgeStyle/.append style={thin}}
  \tikzset{LabelStyle/.append style={font = \itshape}}
  \SetVertexMath

  \clip (-2,2.5) rectangle (18, -15.0);
  \tikzset{VertexStyle/.append style={
  font=\itshape\large, shape = rounded rectangle, inner sep = 2pt, outer sep = 0pt,minimum size = 20 pt,draw}}
  \def\x{5.0}
  \def\y{2.5}
\node at (\x-1.0,\y-0.75) {$G/_{i=1}^3X_{1,i}/_{i=1}^4X_{2,i}/_{i=1}^4X_{3,i}/X_{4,1}\boxtimes G/_{i=1}^3X_{1,i}/_{i=1}^5X_{2,i}/_{i=1}^3X_{3,i}/X_{4,1}$};
  \def\x{0.0}
  \def\y{0.0}
  \Vertex[x=\x+0, y=\y+0,L={(\tilde{x}_1'',\tilde{x}_1')}]{u_00}
  \Vertex[x=\x+2, y=\y+0,L={(\tilde{x}_2'',\tilde{x}_1')}]{u_01}
  \Vertex[x=\x+4, y=\y+0,L={(\tilde{x}_3'',\tilde{x}_1')}]{u_02}
  \Vertex[x=\x+6, y=\y+0,L={(\tilde{x}_4'',\tilde{x}_1')}]{u_03}
  \Vertex[x=\x+8, y=\y+0,L={(\tilde{x}_5'',\tilde{x}_1')}]{u_04}
  \Vertex[x=\x+10, y=\y+0,L={(\tilde{x}_6'',\tilde{x}_1')}]{u_05}
  \def\y{-2.0}
  \Vertex[x=\x+0, y=\y+0,L={(\tilde{x}_1'',\tilde{x}_2')}]{u_10}
  \Vertex[x=\x+2, y=\y+0,L={(\tilde{x}_2'',\tilde{x}_2')}]{u_11}
  \Vertex[x=\x+4, y=\y+0,L={(\tilde{x}_3'',\tilde{x}_2')}]{u_12}
  \Vertex[x=\x+6, y=\y+0,L={(\tilde{x}_4'',\tilde{x}_2')}]{u_13}
  \Vertex[x=\x+8, y=\y+0,L={(\tilde{x}_5'',\tilde{x}_2')}]{u_14}
  \Vertex[x=\x+10, y=\y+0,L={(\tilde{x}_6'',\tilde{x}_2')}]{u_15}
  \def\y{-4.0}
  \Vertex[x=\x+0, y=\y+0,L={(\tilde{x}_1'',\tilde{x}_3')}]{u_20}
  \Vertex[x=\x+2, y=\y+0,L={(\tilde{x}_2'',\tilde{x}_3')}]{u_21}
  \Vertex[x=\x+4, y=\y+0,L={(\tilde{x}_3'',\tilde{x}_3')}]{u_22}
  \Vertex[x=\x+6, y=\y+0,L={(\tilde{x}_4'',\tilde{x}_3')}]{u_23}
  \Vertex[x=\x+8, y=\y+0,L={(\tilde{x}_5'',\tilde{x}_3')}]{u_24}
  \Vertex[x=\x+10, y=\y+0,L={(\tilde{x}_6'',\tilde{x}_3')}]{u_25}
  \def\y{-6.0}
  \Vertex[x=\x+0, y=\y+0,L={(\tilde{x}_1'',\tilde{x}_4')}]{u_30}
  \Vertex[x=\x+2, y=\y+0,L={(\tilde{x}_2'',\tilde{x}_4')}]{u_31}
  \Vertex[x=\x+4, y=\y+0,L={(\tilde{x}_3'',\tilde{x}_4')}]{u_32}
  \Vertex[x=\x+6, y=\y+0,L={(\tilde{x}_4'',\tilde{x}_4')}]{u_33}
  \Vertex[x=\x+8, y=\y+0,L={(\tilde{x}_5'',\tilde{x}_4')}]{u_34}
  \Vertex[x=\x+10, y=\y+0,L={(\tilde{x}_6'',\tilde{x}_4')}]{u_35}
  \def\y{-8.0}
  \Vertex[x=\x+0, y=\y+0,L={(\tilde{x}_1'',\tilde{x}_5')}]{u_40}
  \Vertex[x=\x+2, y=\y+0,L={(\tilde{x}_2'',\tilde{x}_5')}]{u_41}
  \Vertex[x=\x+4, y=\y+0,L={(\tilde{x}_3'',\tilde{x}_5')}]{u_42}
  \Vertex[x=\x+6, y=\y+0,L={(\tilde{x}_4'',\tilde{x}_5')}]{u_43}
  \Vertex[x=\x+8, y=\y+0,L={(\tilde{x}_5'',\tilde{x}_5')}]{u_44}
  \Vertex[x=\x+10, y=\y+0,L={(\tilde{x}_6'',\tilde{x}_5')}]{u_45}
  \def\y{-10.0}
  \Vertex[x=\x+0, y=\y+0,L={(\tilde{x}_1'',\tilde{x}_6')}]{u_50}
  \Vertex[x=\x+2, y=\y+0,L={(\tilde{x}_2'',\tilde{x}_6')}]{u_51}
  \Vertex[x=\x+4, y=\y+0,L={(\tilde{x}_3'',\tilde{x}_6')}]{u_52}
  \Vertex[x=\x+6, y=\y+0,L={(\tilde{x}_4'',\tilde{x}_6')}]{u_53}
  \Vertex[x=\x+8, y=\y+0,L={(\tilde{x}_5'',\tilde{x}_6')}]{u_54}
  \Vertex[x=\x+10, y=\y+0,L={(\tilde{x}_6'',\tilde{x}_6')}]{u_55}

  \def\y{-12.0}
  \Vertex[x=\x+0, y=\y+0,L={(\tilde{x}_1'',\tilde{x}_7')}]{u_60}
  \Vertex[x=\x+2, y=\y+0,L={(\tilde{x}_2'',\tilde{x}_7')}]{u_61}
  \Vertex[x=\x+4, y=\y+0,L={(\tilde{x}_3'',\tilde{x}_7')}]{u_62}
  \Vertex[x=\x+6, y=\y+0,L={(\tilde{x}_4'',\tilde{x}_7')}]{u_63}
  \Vertex[x=\x+8, y=\y+0,L={(\tilde{x}_5'',\tilde{x}_7')}]{u_64}
  \Vertex[x=\x+10, y=\y+0,L={(\tilde{x}_6'',\tilde{x}_7')}]{u_65}
  \Vertex[x=\x+12, y=\y+0,L={(\tilde{x}_7'',\tilde{x}_7')}]{v_0}

\def\x{-4}
\def\y{-6.0}	

\draw (\x+5.2+4.75,\y+1.0)[very thick, dashdotted,font={\itshape}] .. controls ($ (\x+16,\y+1.0) +(0,1)$) and ($ (v_0) +(0,8)$) .. (v_0) node[pos=0.5, inner sep=-1pt,  label={a}] {};
\draw (\x+13.2+1.75,\y-0.5)[very thick, dashdotted,font={\itshape}] .. controls ($ (\x+13.2+1.75,\y-0.5) +(0.5,0)$) and ($ (v_0) +(0,4)$) .. (v_0) node[pos=0.5, inner sep=-1pt,  label={a}] {};
\draw (\x+4.9,\y-3.1)[very thick, dotted,font={\itshape}] .. controls ($ (\x+4.9,\y-3.0) +(0,-6)$) and ($ (v_0) +(-10,-4)$) .. (v_0) node[pos=0.5, inner sep=-1pt,  label={b}] {};
\draw (\x+5.5+1.1,\y+4.8)[very thick, font={\itshape}] .. controls ($ (\x+5.5+0.6,\y+5.0) +(12,2)$) and ($ (v_0) +(2,9)$) .. (v_0) node[pos=0.5, inner sep=-1pt,  label={c}] {};
\draw (\x+9.5+3.4,\y+4.8-0.4)[very thick, font={\itshape}] .. controls ($ (\x+9.5+3.4,\y+4.8-0.4) +(2,1)$) and ($ (v_0) +(2,9)$) .. (v_0) node[pos=0.5, inner sep=-1pt,  label={c}] {};
\draw (\x+5.5+1,\y+4.8-9.6-0.05)[very thick, font={\itshape}] .. controls ($ (\x+5.5+0.5,\y+4.8-9.6-0.2) +(4,-1)$) and ($ (v_0) +(-2,1)$) .. (v_0){};
\node at (\x+12, \y-5.2) {c};


\draw (\x+9.5+3.1,\y+4.8-9.6)[very thick, font={\itshape}] .. controls ($(\x+9.5+5,\y+4.8-9.5)$) and ($(v_0) +(-1,1)$) .. (v_0){};
\node at (\x+14, \y-4.6) {c};

  \def\x{0}
  \def\y{-1.0}
\draw[circle, -,dotted, very thick,rounded corners=8pt] (\x+0.2,\y+2)--(\x+6.4,\y+2) --(\x+6.9,\y+1.5) -- (\x+6.9,\y-7.5)-- (\x+6.4,\y-8.0) -- (\x-0.3,\y-8.0) -- (\x-0.8,\y-7.5) -- (\x-0.8,\y+1.5) -- (\x-0.3,\y+2)--(\x+0.2,\y+2);
  \def\x{1.9}
  \def\y{-5.0}
  \def\x{2}
  \def\y{-3.0}
\draw[circle, -, very thick,rounded corners=8pt] (\x+0.2,\y+2)--(\x+0.4,\y+2) --(\x+0.9,\y+1.5) -- (\x+0.9,\y+0.5)-- (\x+0.4,\y-0.0) -- (\x-0.3,\y-0.0) -- (\x-0.8,\y+0.5) -- (\x-0.8,\y+1.5) -- (\x-0.3,\y+2)--(\x+0.2,\y+2);
  \def\x{6}
  \def\y{-3.0}
\draw[circle, -, very thick,rounded corners=8pt] (\x+0.2,\y+2)--(\x+2.4,\y+2) --(\x+2.9,\y+1.5) -- (\x+2.9,\y+0.5)-- (\x+2.4,\y-0.0) -- (\x-0.3,\y-0.0) -- (\x-0.8,\y+0.5) -- (\x-0.8,\y+1.5) -- (\x-0.3,\y+2)--(\x+0.2,\y+2);
  \def\x{2}
  \def\y{-9.0}
\draw[circle, -, very thick,rounded corners=8pt] (\x+0.2,\y+2)--(\x+0.4,\y+2) --(\x+0.9,\y+1.5) -- (\x+0.9,\y-1.5)-- (\x+0.4,\y-2.0) -- (\x-0.3,\y-2.0) -- (\x-0.8,\y-1.5) -- (\x-0.8,\y+1.5) -- (\x-0.3,\y+2)--(\x+0.2,\y+2);
  \def\x{6}
  \def\y{-9.0}
\draw[circle, -, very thick,rounded corners=8pt] (\x+0.2,\y+2)--(\x+2.4,\y+2) --(\x+2.9,\y+1.5) -- (\x+2.9,\y-1.5)-- (\x+2.4,\y-2.0) -- (\x-0.3,\y-2.0) -- (\x-0.8,\y-1.5) -- (\x-0.8,\y+1.5) -- (\x-0.3,\y+2)--(\x+0.2,\y+2);
  \def\x{1.9}
  \def\y{-7.0}
  \draw[circle, -,dashdotted, very thick,rounded corners=8pt] (\x+0.2,\y+2)--(\x+4.5,\y+2) --(\x+5,\y+1.5) -- (\x+5,\y-4)-- (\x+4.5,\y-4.5) -- (\x-0.3,\y-4.5) -- (\x-0.8,\y-4) -- (\x-0.8,\y+1.5) -- (\x-0.3,\y+2)--(\x+0.2,\y+2);
    \draw[circle, -,dashdotted, very thick,rounded corners=8pt] (\x+8.2,\y+2)--(\x+8.5,\y+2) --(\x+9,\y+1.5) -- (\x+9,\y-4)-- (\x+8.5,\y-4.5) -- (\x+8-0.3,\y-4.5) -- (\x+8-0.8,\y-4) -- (\x+8-0.8,\y+1.5) -- (\x+8-0.3,\y+2)--(\x+8.2,\y+2);
\def\x{1.6}
\def\y{-1.0}

\def\x{2}
\def\y{-7.0}

  \def\x{-0.1+12}
  \def\y{-0.9-12}
  \draw[circle, -,dashed, very thick,rounded corners=8pt] (\x+0.2,\y+2)--(\x+0.6,\y+2) --(\x+1.1,\y+1.5) -- (\x+1.1,\y+0.3)-- (\x+0.6,\y-0.2) -- (\x-0.3,\y-0.2) -- (\x-0.8,\y+0.3) -- (\x-0.8,\y+1.5) -- (\x-0.3,\y+2)--(\x+0.2,\y+2);
    \def\y{-0.9-11.5}

  \def\x{2}
  \def\y{2}

\end{tikzpicture}
}
\end{center}
\caption{The intermediate stage of $G/_{i=1}^3X'_{1,i}/_{i=1}^4X'_{2,i}/_{i=1}^4X'_{3,i}/X'_{4,1}$ and $G/_{i=1}^3X''_{1,i}$ $/_{i=1}^5X''_{2,i}/_{i=1}^3X''_{3,i}/X''_{4,1}$, $G/_{i=1}^3X'_{1,i}/_{i=1}^4X'_{2,i}/_{i=1}^4X'_{3,i}/X'_{4,1}\boxtimes G/_{i=1}^3X''_{1,i}/_{i=1}^5X''_{2,i}/_{i=1}^3X''_{3,i}/X''_{4,1}$.}
  \label{BiPartiteExampleDecomposition2}
\end{figure}
\begin{theorem}\label{theorem_5}
Let $G$ be a bipartite matrix graph consisting of semicomplete bipartite subgraphs $B(X_a,X_b)$ only, where each $B(X_a,X_b)$ is arc-induced by a set of all arcs of $G$ with identical labels, $V(G)=X_1\cup\ldots \cup X_x$
, $a,b\in\{1,\ldots,x\}, a\neq b$.
Let $[X_a,X_b]$ have only forward arcs or let $[X_a,X_b]$ have only backward arcs.
Let there be no arc $a=u_iv_j$ in $G$ with $u_i,v_j\in X_a$ or $u_i,v_j\in X_b$.
Let $X_a=\{v_{i,j}\mid i\in I_{X_a}\subseteq I=\{1,\ldots,m\},j\in J_{X_a}\subseteq J=\{1,\ldots,n\}\}$, $|X_a|=k_a\cdot l_a$, $k_a, l_a\in \mathbb{N}^+,|I_{X_a}|=k_a,|J_{X_a}|=l_a$, with rows $X'_{a,i}=\{v_{i,j}\mid j\in J_{X_a}\},i\in I_{X_a}$ and columns $X''_{a,j}=\{v_{i,j}\mid i\in I_{X_a}\},j\in J_{X_a}$ and let $X_b=\{v_{i,j}\mid i\in I_{X_b}\subseteq I=\{1,\ldots,m\},j\in J_{X_b}\subseteq J=\{1,\ldots,n\}\}$, $|X_b|=k_b\cdot l_b$, $k_b, l_b\in \mathbb{N}^+,|I_{X_b}|=k_b,|J_{X_b}|=l_b$, with rows $X'_{b,i}=\{v_{i,j}\mid j\in J_{X_b}\},i\in I_{X_b}$ and columns $X''_{b,j}=\{v_{i,j}\mid i\in I_{X_b}\},j\in J_{X_b}$. 
If the intersection of $X_i$ and $X_j$ is empty or the intersection of $X_i$ and $X_j$ is a grid, for any $X_i$ and any $X_j$ of $G$ for $i,j\in\{1,\ldots,x\}$ then $G\cong G/_{y=1}^{x}/_{z=1}^{k_y}X'_{y,z}\boxbackslash G/_{y=1}^{x}/_{z=1}^{l_y}X''_{y,z}$.
\end{theorem}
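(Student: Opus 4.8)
The plan is to exhibit an explicit isomorphism and then verify it against the construction of the VRSP. Write $H = G/_{y=1}^{x}/_{z=1}^{k_y}X'_{y,z}$ for the graph obtained by contracting every row of every grid and $I = G/_{y=1}^{x}/_{z=1}^{l_y}X''_{y,z}$ for the graph obtained by contracting every column of every grid. Since each vertex of $G$ carries a matrix index $v_{i,j}$, the candidate bijection is $\phi(v_{i,j}) = (r_i, c_j)$, where $r_i$ is the vertex of $H$ representing the contracted $i$-th row and $c_j$ is the vertex of $I$ representing the contracted $j$-th column. I would first record that, because $V(G) = X_1 \cup \cdots \cup X_x$ and every $X_a$ is a grid, each $v_{i,j}$ lies in some row $X'_{y,i}$ and some column $X''_{y,j}$, so that $\phi$ is at least well defined into $V(H) \times V(I) = V(H \Box I)$.

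The first substantive step is a contraction lemma: $H$ has exactly one vertex per occupied matrix row and $I$ exactly one per occupied matrix column. For $H$ this amounts to showing that the successive contractions of the individual rows $X'_{y,z}$ fuse all of $R_i \cap V(G)$ into a single vertex $r_i$ and never merge vertices from different matrix rows. The never-merge-across-rows half is the third requirement (overlapping rows lie in a common $R_i$) together with the hypothesis that each $X_i \cap X_j$ is empty or a grid, which guarantees that two overlapping grids share whole rows rather than crossing a row against a column. The fuse-the-whole-row half is the fourth requirement: its connectivity condition prevents $R_i \cap V(G)$ from splitting into two parts with no overlapping pair of rows, so that the chain of overlapping row-contractions collapses the entire occupied row. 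The statement for $I$ (columns, via the fifth and sixth requirements) is the mirror image. I would also note here that, because distinct bipartite subgraphs share no label, each arc of $G$ lying in $B(X_a,X_b)$ with label $\ell$ descends to an arc $r_i \to r_{i'}$ of $H$ and an arc $c_j \to c_{j'}$ of $I$ both labelled $\ell$, and that multiple parallel images collapse to a single labelled arc by the standing convention on multiple arcs.

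Next I would analyse the product itself. Because every label occurring in $H$ also occurs in $I$ (both are inherited from the same bipartite subgraphs of $G$), no asynchronous arc of $H \Box I$ survives into the intermediate product $H \boxtimes I$; every arc of $H \boxtimes I$ is synchronous, and by the previous step these synchronous arcs are precisely the arcs $(r_i,c_j)\to(r_{i'},c_{j'})$ for which $v_{i,j}v_{i',j'}$ is an arc of $G$, carrying the same label. Consequently $\phi$ respects arcs automatically once the vertex sets are matched, and the whole theorem reduces to the survival analysis of the vertex-removing step: I must show that the iterated deletion in passing from $H \boxtimes I$ to $H \boxbackslash I$ deletes exactly the pairs $(r_i,c_j)$ with $v_{i,j} \notin V(G)$ and retains exactly the images $\phi(v_{i,j})$. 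An empty grid position $(r_i,c_j)$ has no synchronous in- or out-arc (such an arc would need a $G$-arc incident with the non-existent $v_{i,j}$), so it is isolated in $H\boxtimes I$ while carrying Cartesian arcs, and is therefore removed; a short induction propagates this through the iteration.

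The main obstacle is the complementary claim that no genuine vertex $\phi(v_{i,j})$ is ever deleted, and in particular that the images of the sources and sinks of $G$ are kept. This is exactly the point at which Theorems~\ref{theorem_1} and~\ref{theorem_2} needed the hypothesis $S'(G)\subseteq X$ and the absence of backward arcs; here the same effect must be extracted from the grid data. The delicate part is to show that whenever one vertex of a matrix row has an in-arc in $G$ the entire occupied row does, so that a source $v_{i,j}$ forces $r_i$ to be a source of $H$ and $c_j$ a source of $I$, whence $(r_i,c_j)$ has Cartesian in-degree $0$ and escapes deletion; the dual statement with out-arcs handles the sinks. I expect this homogeneity of rows and columns to follow from combining the completeness of each $B(X_a,X_b)$ (which turns an in-arc at one vertex of a row $X'_{b,i}$ into an in-arc at every vertex of that row, since $i\in I_{X_b}$ forces $X'_{b,i}\subseteq X_b$) with the third through sixth requirements and the grid-intersection hypothesis, which chain these rows across overlapping grids so as to cover $R_i\cap V(G)$. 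Granting this homogeneity, every image survives, the vertex bijection is complete, and with the arc correspondence already in hand we conclude $G\cong H\boxbackslash I$. A cleaner alternative worth attempting is an induction on the number $x$ of bipartite subgraphs that peels off one $B(X_a,X_b)$ at a time and invokes Theorem~\ref{theorem_1} (or Theorem~\ref{theorem_2}) locally, the grid-intersection hypothesis being precisely what guarantees that the contractions used in successive applications remain mutually compatible.
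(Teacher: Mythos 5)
Your proposal follows the paper's own route almost step for step: the same bijection $\phi(v_{i,j})=(\tilde{x}'_i,\tilde{x}''_j)$, the same preliminary lemma that the third/fourth (resp.\ fifth/sixth) requirements on a bipartite matrix graph force every occupied matrix row (resp.\ column) to collapse to a single vertex and never merge distinct rows, the same observation that every surviving arc of the intermediate product is a synchronous arc corresponding to an arc of $G$, and the same label-disjointness argument showing that a position $(\tilde{x}'_s,\tilde{x}''_t)$ with $v_{s,t}\notin V(G)$ acquires no synchronous in-arc (because a common label would force $v_{s,t}$ into the head grid, which is a grid) and is therefore removed. Up to that point the proposal is a faithful reconstruction of the paper's proof of Claim~\ref{claim3} and the subsequent removal analysis.

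The one step you leave open --- that no image $\phi(v_{i,j})$ is deleted, in particular that a source $v_{i,j}$ of $G$ yields a vertex $(\tilde{x}'_i,\tilde{x}''_j)$ with in-degree $0$ already in the Cartesian product, so that the removal rule does not fire on it --- is a genuine gap in your write-up: you state the required ``homogeneity of rows and columns'' and say you \emph{expect} it to follow from semicompleteness together with the overlap requirements, but you do not derive it, and it is not automatic (a priori, $i\in I_{X_b}$ for some head grid $X_b$ gives $\tilde{x}'_i$ an in-arc in the row-contracted graph even when $j\notin J_{X_b}$, so $v_{i,j}$ itself need not have an in-arc). You should be aware, however, that the paper's proof does not close this step either: after showing that all vertices outside $Z$ disappear it simply concludes $Z=V(G/_{y=1}^{x}/_{z=1}^{k_y}X'_{y,z}\boxbackslash G/_{y=1}^{x}/_{z=1}^{l_y}X''_{y,z})$, with no argument that the vertices of $Z$ themselves survive the removal phase. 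So you have correctly isolated the only point at which the argument is delicate; to complete the proof you would need to show that, under the stated hypotheses, whenever some vertex of an occupied row $R_i\cap V(G)$ lies in a head grid of a label class, every vertex of that occupied row does (and dually for tails and for columns), or else weaken the conclusion. Your suggested alternative --- peeling off one $B(X_a,X_b)$ at a time and invoking Theorem~\ref{theorem_1} or~\ref{theorem_2} locally --- is not what the paper does and would face its own compatibility issues between successive contractions, so I would not pursue it as the primary argument.
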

\begin{proof}
It suffices to define a mapping $\phi: V(G)\rightarrow V(G/_{y=1}^{x}/_{z=1}^{k_y}X'_{y,z}\boxbackslash G/_{y=1}^{x}$ $/_{z=1}^{l_y}X''_{y,z})$ and to prove that $\phi$ is an isomorphism from $G$ to $G/_{y=1}^{x}/_{z=1}^{k_y}X'_{y,z}$ $/_{y=1}^{x}\boxbackslash G/_{y=1}^{x}/_{z=1}^{l_y}X''_{y,z}$.
Let $\tilde{x}'_i$ be the new vertex replacing the set $X'_{y,z}$ with $v_{i,j}\in X'_{y,z}$, $\tilde{x}''_j$  be the new vertex replacing the set $X''_{y,z}$ with $v_{i,j}\in X''_{y,z}$, when defining $G/_{y=1}^{x}/_{z=1}^{k_y}X'_{y,z}$ and $G/_{y=1}^{x}/_{z=1}^{l_y}X''_{y,z}$, respectively.
Let $\tilde{x}'_i$ be the new vertex replacing the vertices $v_{i,j}\in X'_{y,z}$, $\tilde{x}''_j$  be the new vertex replacing the vertices $v_{i,j}\in X''_{y,z}$, when defining $G/_{y=1}^{x}/_{z=1}^{k_y}X'_{y,z}$ and $G/_{y=1}^{x}/_{z=1}^{l_y}X''_{y,z}$, respectively.
Consider the mapping $\phi: V(G)\rightarrow V(G/_{y=1}^{x}/_{z=1}^{k_y}X'_{y,z}\boxbackslash G/_{y=1}^{x}/_{z=1}^{l_y}$ $X''_{y,z})$ defined by $\phi(v_{i,j})=(\tilde{x}'_i,\tilde{x}''_j)$. 
Then $\phi$ is obviously a bijection if  $V(G/_{y=1}^{x}/_{z=1}^{k_y}X'_{y,z}$ $\boxbackslash G/_{y=1}^{x}/_{z=1}^{l_y}X''_{y,z})=Z$, where $Z$ is defined as $Z=\{(\tilde{x}'_i,\tilde{x}''_j) \mid v_{i,j}\in V(G)$, $ \phi(v_{i,j})=(\tilde{x}'_i,\tilde{x}''_j)\}$. 
We are going to show this later by arguing that all vertices $\tilde{x}'_i$ and $\tilde{x}'_j$, $i\neq j$, are different, and that all vertices $\tilde{x}''_i$ and $\tilde{x}''_j$, $i\neq j$, are different and that all the other vertices $(\tilde{x}'_k,\tilde{x}''_l)$ of $G/_{y=1}^{x}/_{z=1}^{k_y}X'_{y,z}$ $\Box G/_{y=1}^{x}/_{z=1}^{l_y}X''_{y,z}$ for which there is no $v_{k,l}\in V(G)$ will disappear from $G/_{y=1}^{x}/_{z=1}^{k_y}X'_{y,z}$ $\boxbackslash G/_{y=1}^{x}/_{z=1}^{l_y}X''_{y,z}$. 	
But first we are going to prove the following claim. 
\begin{claim}\label{claim3}
The subgraph of $G/_{y=1}^{x}/_{z=1}^{k_y}X'_{y,z}$ $\boxtimes G/_{y=1}^{x}/_{z=1}^{l_y}X''_{y,z}$ induced by $Z$ is isomorphic to $G$.
\end{claim}
\begin{proof}
We start with proving that $\tilde{x}'_i$ and $\tilde{x}'_j, i\neq j,$ implies $\tilde{x}'_i\neq \tilde{x}'_j$ and that $\tilde{x}''_i$ and $\tilde{x}''_j, i\neq j,$ implies $\tilde{x}''_i\neq \tilde{x}'_j$.
Let $R_i$ be the set of rows with all vertices $v_{i,j}$ of $V(G)$.
Therefore, all rows in $R_i$ have the number $i$ as their first index.
Because $G$ is a bipartite matrix graph, we have that for any division of $R_i$ into the sets $R_{i_1}$ and $R_{i_2}$, $R_i=R_{i_1}\cup R_{i_2}$, there is always a row $X'_{k,x}\in R_{i_1}$ and a row $X'_{l,y}\in R_{i_2}$ with $X'_{k,x}\cap X'_{l,y}\neq \emptyset$.
Therefore, all rows of $R_i$ are contracted to $\tilde{x}'_i$. 
Because all rows with vertices $v_{i,j}$ are in $R_i$, a row with a vertex $v_{k,l}$ with $v_{k,l}$ not in in any row of $R_i$ must have $i\neq k$.
Likewise, let $R_j$ be the set of columns with all vertices $v_{i,j}$ of $V(G)$.
Therefore, all columns in $R_j$ have the number $j$ as their second index.
Because $G$ is a bipartite matrix graph, we have that for any division of $R_j$ into the sets $R_{j_1}$ and $R_{j_2}$, $R_j=R_{j_1}\cup R_{j_2}$, there is always a column $X''_{k,x}\in R_{j_1}$ and a column $X''_{l,y}\in R_{j_2}$ with $X''_{k,x}\cap X''_{l,y}\neq \emptyset$.
Therefore, all columns of $R_j$ are contracted to $\tilde{x}''_i$. 
Because all columns with vertices $v_{i,j}$ are in $R_j$, a column with a vertex $v_{k,l}$ with $v_{k,l}$ not in in any column of $R_j$ must have $j\neq l$.
Hence, we have that $\tilde{x}'_i$ and $\tilde{x}'_j, i\neq j,$ implies $\tilde{x}'_i\neq \tilde{x}'_j$ and that $\tilde{x}''_i$ and $\tilde{x}''_j, i\neq j,$ implies $\tilde{x}''_i\neq \tilde{x}'_j$.
Therefore, $\phi$ maps each vertex $v_{i,j}\in V(G)$ to $(\tilde{x}'_i,\tilde{x}''_j)\in Z$ and if $v_{i_1,j_1}\neq v_{i_2,j_2}$ then  $(\tilde{x}'_{i_1},\tilde{x}'_{i_2})\neq (\tilde{x}''_{j_1}\tilde{x}''_{j_2})$, $v_{i_1,j_1}, v_{i_2,j_2}\in V(G)$ and we have that $\phi$ a bijection from $V(G)$ to $Z$. 
It remains to show that this bijection preserves the arcs and their labels.

Because there is no arc $a=v_{i,j}v_{k.l}$ in $A(G)$ with $v_{i,j},v_{k,l}\in X_a$ or $v_{i,j},v_{k,l}\in X_b$, we have that by the contractions each arc $a\in A(G)$ with $\mu(a)=(v_{i,j},v_{k,l}), \lambda(a)=a'$ is replaced by an arc $x'\in G/_{y=1}^{x}/_{z=1}^{k_y}X'_{y,z}$ with $\mu(x')=(\tilde{x}'_i,\tilde{x}'_k), \lambda(x')=a'$ and an arc $x''\in G/_{y=1}^{x}/_{z=1}^{l_y}X''_{y,z}$ with $\mu(x'')=(\tilde{x}''_j,\tilde{x}''_l), \lambda(x'')=a'$.
Therefore, all arcs $x'$ are synchronising arcs of $G/_{y=1}^{x}/_{z=1}^{k_y}X'_{y,z}$ with respect to $G/_{y=1}^{x}/_{z=1}^{l_y}X''_{y,z}$ (by hypothesis) and all arcs $x''$ are synchronising arcs of $G/_{y=1}^{x}/_{z=1}^{l_y}X''_{y,z}$ with respect to $G/_{y=1}^{x}/_{z=1}^{k_y}X'_{y,z}$ (by hypothesis).  
It follows that the arcs $x'$ and $x''$ correspond to an arc $y=(\tilde{x}'_i,\tilde{x}''_j)(\tilde{x}'_{k},\tilde{x}''_{l})$ of $G/_{y=1}^{x}/_{z=1}^{k_y}X'_{y,z}$ $\boxtimes G/_{y=1}^{x}/_{z=1}^{l_y}X''_{y,z}$ with $\lambda(y)=\lambda(x')$.
Furthermore, $\phi$ maps vertices $v_{i,j}$ and $v_{k,l}$ on vertices $(\tilde{x}'_i,\tilde{x}''_j)$ and $(\tilde{x}'_k,\tilde{x}''_l)$, respectively, and therefore we have that an arc $z=v_{i,j}v_{k,l}$ of $G$ corresponds with an arc $y=(\tilde{x}'_i,\tilde{x}''_j)(\tilde{x}'_k,\tilde{x}''_l)$ of $G/_{y=1}^{x}/_{z=1}^{k_y}X'_{y,z}$ $\boxtimes G/_{y=1}^{x}/_{z=1}^{l_y}X''_{y,z}$, with $\lambda(y)=\lambda(z)$. 
Because $(\tilde{x}'_i,\tilde{x}''_j)$ and $(\tilde{x}'_{k},\tilde{x}''_{l})$ are in $Z$, the arc $y$ is an arc of the graph induced by $Z$ and we have the one-to-one relationship between the arcs $y$ of $G/_{y=1}^{x}/_{z=1}^{k_y}X'_{y,z}$ $\boxtimes G/_{y=1}^{x}/_{z=1}^{l_y}X''_{y,z}$ and $z$ in $G$.
Together with, there are no other vertices in $Z$ than $(\tilde{x}'_i,\tilde{x}''_j)$ and $(\tilde{x}'_{k},\tilde{x}''_{l}))$ and there are no other vertices in $G$ than $v_{i,j}$ and $v_{k,l}$, the subgraph of $G/_{y=1}^{x}/_{z=1}^{k_y}X'_{y,z}$ $\boxtimes G/_{y=1}^{x}/_{z=1}^{l_y}X''_{y,z}$ induced by $Z$ is isomorphic to $G$.
\end{proof}

By the definition of the Cartesian product, for each pair of vertices $\tilde{x}'_{i} \in V(G/_{y=1}^{x}/_{z=1}^{k_y}$ $X'_{y,z})$ and $\tilde{x}''_{j} \in V(G/_{y=1}^{x}/_{z=1}^{l_y}X''_{y,z})$, there exists a vertex $(\tilde{x}'_{i},\tilde{x}''_{j}) \in V(G/_{y=1}^{x}/_{z=1}^{k_y}X'_{y,z} \boxtimes G/_{y=1}^{x}/_{z=1}^{l_y}X''_{y,z})$. 
It remains to show that $\phi$ is a bijection from $V(G)$ to $Z'=V(G/_{y=1}^{x}/_{z=1}^{k_y}$ $X'_{y,z}\boxbackslash G/_{y=1}^{x}/_{z=1}^{l_y}X''_{y,z})$ preserving the arcs and their labels.
Therefore, we have to show that all vertices of $V(G/_{y=1}^{x}/_{z=1}^{k_y}X'_{y,z}\boxtimes G/_{y=1}^{x}/_{z=1}^{l_y}X''_{y,z})$ not in $Z$ are removed from $V(G/_{y=1}^{x}/_{z=1}^{k_y}$ $X'_{y,z}\boxtimes G/_{y=1}^{x}/_{z=1}^{l_y}X''_{y,z})$.
Let $|G/_{y=1}^{x}/_{z=1}^{k_y}$ $/X'_{y,z}|=m_1\leq m$ and $|G/_{y=1}^{x}/_{z=1}^{l_y}X''_{y,z}|=n_1\leq n$.
Let $v_{s,t}\notin G$ with $s\in \{1,\ldots ,m_1\}$ and $t\in \{1,\ldots ,n_1\}$.
Then there cannot exist an arc $v_{i,j}v_{s,t}\in A(G)$ otherwise $v_{s,t}$ must be in $V(G)$. 
But there exist a vertex $\tilde{x}'_s\in G/_{y=1}^{x}/_{z=1}^{k_y}$ $X'_{y,z}$ and a vertex $\tilde{x}''_t \in G/_{y=1}^{x}/_{z=1}^{l_y}X''_{y,z}$, and, therefore, there exists a vertex $(\tilde{x}'_s,\tilde{x}''_t)\in V(G/_{y=1}^{x}/_{z=1}^{k_y}$ $/X'_{y,z}\boxtimes G/_{y=1}^{x}/_{z=1}^{l_y}X''_{y,z})$. 
The intersection of the set of labels $L'$ of arcs with head $\tilde{x}'_s$ and the set of labels $L''$ of arcs with head $\tilde{x}''_t$ is empty, because otherwise there exists an arc $a$ in $A(G)$ with head $v_{s,t}$. 
Hence, all arcs with head $\tilde{x}'_s$ are asynchronous with respect to all arcs with head $\tilde{x}''_t$.
Therefore, there cannot exist a vertex $\tilde{x}'_s,\tilde{x}''_t \in V(G/_{y=1}^{x}/_{z=1}^{k_y}X'_{y,z}\boxbackslash G/_{y=1}^{x}/_{z=1}^{l_y}X''_{y,z}$ and $Z$ must be equal to $V(G/_{y=1}^{x}/_{z=1}^{k_y}X'_{y,z}\boxbackslash G/_{y=1}^{x}/_{z=1}^{l_y}X''_{y,z})$.
Because the subgraph of $V(G/_{y=1}^{x}/_{z=1}^{k_y}X'_{y,z}\boxtimes G/_{y=1}^{x}/_{z=1}^{l_y}X''_{y,z})$ induced by $Z$ is isomorphic to $G$ and $Z=V(G/_{y=1}^{x}/_{z=1}^{k_y}X'_{y,z}\boxbackslash G/_{y=1}^{x}/_{z=1}^{l_y}X''_{y,z})$, it follows that $G \cong G/_{y=1}^{x}/_{z=1}^{k_y}$ $X'_{y,z}\boxbackslash G/_{y=1}^{x}/_{z=1}^{l_y}X''_{y,z}$.
This completes the proof of Theorem~\ref{theorem_5}.
\end{proof}
We call a bipartite matrix graph consisting of semicomplete bipartite subgraphs that is decomposable by Theorem~\ref{theorem_5} a \emph{VRSP-decomposable bipartite matrix graph}.

In the fourth decomposition theorem we are going to prove that $G/_{i\in I_R}R_i\boxbackslash G/_{j\in J_C}C_j\cong G$, where $V(G)$ consists of nonempty pairwise disjoint subsets $R_i=\{v_{i,j}\mid j\in J_C\subseteq J\},i\in I_R\subseteq I$, and nonempty pairwise disjoint subsets $C_j=\{v_{i,j}\mid i\in I_R\subseteq I\},j\in J_C\subseteq J,|I_R|=m_1,|J_C|=n_1,$, with $V(G)=\bigcup\limits_{i\in I_R}R_i=\bigcup\limits_{j\in J_C}C_j$, for which $G[R_x]\cong G[R_y],x,y\in I_R$, $G[C_x]\cong G[C_y],x,y\in J_C$, the arcs of $A_R=\bigcup\limits_{x\in I_R}A[R_x]$ and the arcs of $A_C=\bigcup\limits_{y\in I_C}A[C_y]$ have no labels in common and there are no other arcs in $A(G)$ than the arcs of $A_R$ and the arcs of $A_C$.
We give an illustrative example of the decomposition by Theorem~\ref{theorem_6} in Figure~\ref{ThirdDecomposition}.
\begin{figure}[H]
\begin{center}
\resizebox{0.75\textwidth}{!}{
\begin{tikzpicture}[->,>=latex,shorten >=0pt,auto,node distance=2.5cm,
  main node/.style={circle,fill=blue!10,draw, font=\sffamily\Large\bfseries}]
  \tikzset{VertexStyle/.append style={
  font=\itshape\large, shape = circle,inner sep = 2pt, outer sep = 0pt,minimum size = 20 pt,draw}}
  \tikzset{EdgeStyle/.append style={thin}}
  \tikzset{LabelStyle/.append style={font = \itshape}}
  \SetVertexMath
  \def\x{0.0}
  \def\y{1.0}
\node at (\x+0.5,3+\y+3) {$G$};
\node at (\x+4.5,\y+5.4) {$Y_1$};
\node at (\x+1.5,\y+4.4) {$X_1$};
\node at (\x+4.5,\y+2.6) {$Y_2$};
\node at (\x+5.5,\y+4.4) {$X_2$};
\node at (\x+4.5,\y-0.4) {$Y_3$};
\node at (\x+9.5,\y+4.4) {$X_3$};
\node at (\x+13.5,\y+4.4) {$X_4$};
\node at (\x+5.5,\y-1-2) {$G/X_1^4$};
\node at (\x+0.5,\y-5-1) {$G/Y_1^3$};
\node at (\x+6,\y-5-1) {$Z=V(G/Y_1^3\boxtimes G/X_1^4)$};
\node at (\x+7.5,\y-3.1-2) {$G/Y_1^3\boxtimes G/X_1^4$};
  \def\x{0.5}
  \def\y{6.0+1}
  \Vertex[x=\x+1.5, y=\y+0.0,L={u_2}]{u_2}
  \Vertex[x=\x+1.5, y=\y-3.0,L={u_3}]{u_3}
  \Vertex[x=\x+1.5, y=\y-6.0,L={u_4}]{u_4}
  \Vertex[x=\x+5.5, y=\y+0.0,L={u_5}]{u_5}
  \Vertex[x=\x+5.5, y=\y-3,L={u_6}]{u_6}
  \Vertex[x=\x+5.5, y=\y-6,L={u_7}]{u_7}
  \Vertex[x=\x+9.5, y=\y+0.0,L={u_8}]{u_8}
  \Vertex[x=\x+9.5, y=\y-3.0,L={u_9}]{u_9}
  \Vertex[x=\x+9.5, y=\y-6.0,L={u_{10}}]{u_10}
  \Vertex[x=\x+13.5, y=\y+0,L={u_{11}}]{u_11}
  \Vertex[x=\x+13.5, y=\y-3,L={u_{12}}]{u_12}
  \Vertex[x=\x+13.5, y=\y-6,L={u_{13}}]{u_13}

  \Edge[label = b](u_2)(u_3) 
  \Edge[label = c](u_3)(u_4) 
  \Edge[label = b](u_5)(u_6) 
  \Edge[label = c](u_6)(u_7) 
  \Edge[label = b](u_8)(u_9) 
  \Edge[label = c](u_9)(u_10) 
  \Edge[label = b](u_11)(u_12) 
  \Edge[label = c](u_12)(u_13) 
  \Edge[label = d](u_2)(u_5) 
  \Edge[label = e](u_5)(u_8) 
  \Edge[label = f](u_8)(u_11) 
  \Edge[label = d](u_3)(u_6) 
  \Edge[label = e](u_6)(u_9) 
  \Edge[label = f](u_9)(u_12) 
  \Edge[label = d](u_4)(u_7) 
  \Edge[label = e](u_7)(u_10) 
  \Edge[label = f](u_10)(u_13) 
  
  \Edge(u_2)(u_3) 
  \Edge(u_3)(u_4) 
  \Edge(u_5)(u_6) 
  \Edge(u_6)(u_7) 
  \Edge(u_8)(u_9) 
  \Edge(u_9)(u_10) 
  \Edge(u_11)(u_12) 
  \Edge(u_12)(u_13) 
  \Edge(u_2)(u_5) 
  \Edge(u_5)(u_8) 
  \Edge(u_8)(u_11) 
  \Edge(u_3)(u_6) 
  \Edge(u_6)(u_9) 
  \Edge(u_9)(u_12) 
  \Edge(u_4)(u_7) 
  \Edge(u_7)(u_10) 
  \Edge(u_10)(u_13) 
  
  \def\x{4.0}
  \def\y{-3}
  \Vertex[x=\x+1, y=\y+0.0,L={\tilde{x_1}}]{s_1}
  \Vertex[x=\x+4, y=\y+0.0,L={\tilde{x_2}}]{s_2}
  \Vertex[x=\x+7, y=\y+0.0,L={\tilde{x_3}}]{s_3}
  \Vertex[x=\x+10, y=\y+0.0,L={\tilde{x_4}}]{s_4}
  
  \Edge[label = d](s_1)(s_2) 
  \Edge[label = e](s_2)(s_3) 
  \Edge[label = f](s_3)(s_4) 

  \def\x{+1.5}
  \def\y{-3.0}
  \Vertex[x=\x+0, y=\y-3.0,L={\tilde{y}_1}]{t_1}
  \Vertex[x=\x+0, y=\y-6.0,L={\tilde{y}_2}]{t_2}
  \Vertex[x=\x+0, y=\y-9.0,L={\tilde{y}_3}]{t_3}

  \Edge[label = b](t_1)(t_2) 
  \Edge[label = c](t_2)(t_3) 
  \Edge(t_1)(t_2) 
  \Edge(t_2)(t_3) 

\tikzset{VertexStyle/.append style={
  font=\itshape\large,shape = rounded rectangle,inner sep = 0pt, outer sep = 0pt,minimum size = 20 pt,draw}}

  \def\x{2.0}
  \def\y{-7.0}

  \def\x{2.0}
  \def\y{-6.0}
  \Vertex[x=\x+3.0, y=\y-0.0,L={(\tilde{y}_1,\tilde{x}_1)}]{t_1s_1}
  \Vertex[x=\x+6.0, y=\y-0.0,L={(\tilde{y}_1,\tilde{x}_2)}]{t_1s_2}
  \Vertex[x=\x+9.0, y=\y-0.0,L={(\tilde{y}_1,\tilde{x}_3)}]{t_1s_3}
  \Vertex[x=\x+12.0, y=\y-0.0,L={(\tilde{y}_1,\tilde{x}_4)}]{t_1s_4}
  \def\x{2.0}
  \def\y{-9.0}
  \Vertex[x=\x+3.0, y=\y-0.0,L={(\tilde{y}_2,\tilde{x}_1)}]{t_2s_1}
  \Vertex[x=\x+6.0, y=\y-0.0,L={(\tilde{y}_2,\tilde{x}_2)}]{t_2s_2}
  \Vertex[x=\x+9.0, y=\y-0.0,L={(\tilde{y}_2,\tilde{x}_3)}]{t_2s_3}
  \Vertex[x=\x+12.0, y=\y-0.0,L={(\tilde{y}_2,\tilde{x}_4)}]{t_2s_4}
  \def\x{2.0}
  \def\y{-12.0}
  \Vertex[x=\x+3.0, y=\y-0.0,L={(\tilde{y}_3,\tilde{x}_1)}]{t_3s_1}
  \Vertex[x=\x+6.0, y=\y-0.0,L={(\tilde{y}_3,\tilde{x}_2)}]{t_3s_2}
  \Vertex[x=\x+9.0, y=\y-0.0,L={(\tilde{y}_3,\tilde{x}_3)}]{t_3s_3}
  \Vertex[x=\x+12.0, y=\y-0.0,L={(\tilde{y}_3,\tilde{x}_4)}]{t_3s_4}
    \def\x{2.0}
  \def\y{-19.0}


  \Edge[label = d](t_1s_1)(t_1s_2) 
  \Edge[label = e](t_1s_2)(t_1s_3) 
  \Edge[label = f](t_1s_3)(t_1s_4) 
  \Edge(t_1s_1)(t_1s_2) 
  \Edge(t_1s_2)(t_1s_3) 
  \Edge(t_1s_3)(t_1s_4) 

  \Edge[label = d](t_2s_1)(t_2s_2) 
  \Edge[label = e](t_2s_2)(t_2s_3) 
  \Edge[label = f](t_2s_3)(t_2s_4) 
  \Edge(t_2s_1)(t_2s_2) 
  \Edge(t_2s_2)(t_2s_3) 
  \Edge(t_2s_3)(t_2s_4) 

  \Edge[label = d](t_3s_1)(t_3s_2) 
  \Edge[label = e](t_3s_2)(t_3s_3) 
  \Edge[label = f](t_3s_3)(t_3s_4) 
  \Edge(t_3s_1)(t_3s_2) 
  \Edge(t_3s_2)(t_3s_3) 
  \Edge(t_3s_3)(t_3s_4) 


  \Edge[label = b](t_1s_1)(t_2s_1) 
  \Edge[label = b](t_1s_2)(t_2s_2) 
  \Edge[label = b](t_1s_3)(t_2s_3)
  \Edge[label = b](t_1s_4)(t_2s_4)  
  \Edge(t_1s_1)(t_2s_1) 
  \Edge(t_1s_2)(t_2s_2) 
  \Edge(t_1s_3)(t_2s_3)
  \Edge(t_1s_4)(t_2s_4)  
 
  \Edge[label = c](t_2s_1)(t_3s_1) 
  \Edge[label = c](t_2s_2)(t_3s_2) 
  \Edge[label = c](t_2s_3)(t_3s_3)
  \Edge[label = c](t_2s_4)(t_3s_4)  
  \Edge(t_2s_1)(t_3s_1) 
  \Edge(t_2s_2)(t_3s_2) 
  \Edge(t_2s_3)(t_3s_3)
  \Edge(t_2s_4)(t_3s_4)  


  \def\x{1.7}
  \def\y{5.9+1}
\draw[circle, -,dashed, very thick,rounded corners=18pt] (\x-0.5,\y+0.0)--(\x-0.5,\y+0.7) --(\x+1.0,\y+0.7) -- (\x+1.0,\y-6.6) -- (\x-0.5,\y-6.6) --  (\x-0.5,\y+0.0);
  \def\x{5.7}
  \def\y{5.9+1}
\draw[circle, -,dashed, very thick,rounded corners=18pt] (\x-0.5,\y+0.0)--(\x-0.5,\y+0.7) --(\x+1.0,\y+0.7) -- (\x+1.0,\y-6.6) -- (\x-0.5,\y-6.6) --  (\x-0.5,\y+0.0);
  \def\x{9.7}
  \def\y{5.9+1}
\draw[circle, -,dashed, very thick,rounded corners=18pt] (\x-0.5,\y+0.0)--(\x-0.5,\y+0.7) --(\x+1.0,\y+0.7) -- (\x+1.0,\y-6.6) -- (\x-0.5,\y-6.6) --  (\x-0.5,\y+0.0);
  \def\x{13.7}
  \def\y{5.9+1}
\draw[circle, -,dashed, very thick,rounded corners=18pt] (\x-0.5,\y+0.0)--(\x-0.5,\y+0.7) --(\x+1.0,\y+0.7) -- (\x+1.0,\y-6.6) -- (\x-0.5,\y-6.6) --  (\x-0.5,\y+0.0);

  \def\x{1.7}
  \def\y{5.1+1}
\draw[circle, -,dashed, very thick,rounded corners=18pt] (\x+2.8,\y+0.0)--(\x-0.5,\y+0.0)--(\x-0.5,\y+1.5)--(\x+13.0,\y+1.5) --(\x+13.0,\y) -- (\x+2.8,\y+0.0);

  \def\x{1.7}
  \def\y{2.3+1}
\draw[circle, -,dashed, very thick,rounded corners=18pt] (\x+2.8,\y+0.0)--(\x-0.5,\y+0.0)--(\x-0.5,\y+1.5)--(\x+13.0,\y+1.5) --(\x+13.0,\y) -- (\x+2.8,\y+0.0);

  \def\x{1.7}
  \def\y{-0.7+1}
\draw[circle, -,dashed, very thick,rounded corners=18pt] (\x+2.8,\y+0.0)--(\x-0.5,\y+0.0)--(\x-0.5,\y+1.5)--(\x+13.0,\y+1.5) --(\x+13.0,\y) -- (\x+2.8,\y+0.0);

    \def\x{0.5}
  \def\y{6.2+1}

  \def\x{1.5}
  \def\y{-6.0}

  \def\x{3.5}
  \def\y{-6.4}
\draw[circle, -,dashed, very thick,rounded corners=8pt] (\x+0.2,\y+2)--(\x+11.4,\y+2) --(\x+11.9,\y+1.5) -- (\x+11.9,\y-7)-- (\x+11.4,\y-7.5) -- (\x-0.3,\y-7.5) -- (\x-0.8,\y-7) -- (\x-0.8,\y+1.5) -- (\x-0.3,\y+2)--(\x+0.1,\y+2);
\end{tikzpicture}
}
\end{center}
\caption{Decomposition of $G\cong G/_{i=1}^3Y_i\boxbackslash G/_{i=1}^4X_i$. The set $Z$ from the proof of Theorem~\ref{theorem_6} and the graph isomorphic to $G$ induced by $Z$ in $G/_{i=1}^3Y_i\boxtimes G/_{i=1}^4X_i$ are indicated within the dotted region.}
  \label{ThirdDecomposition}
\end{figure}
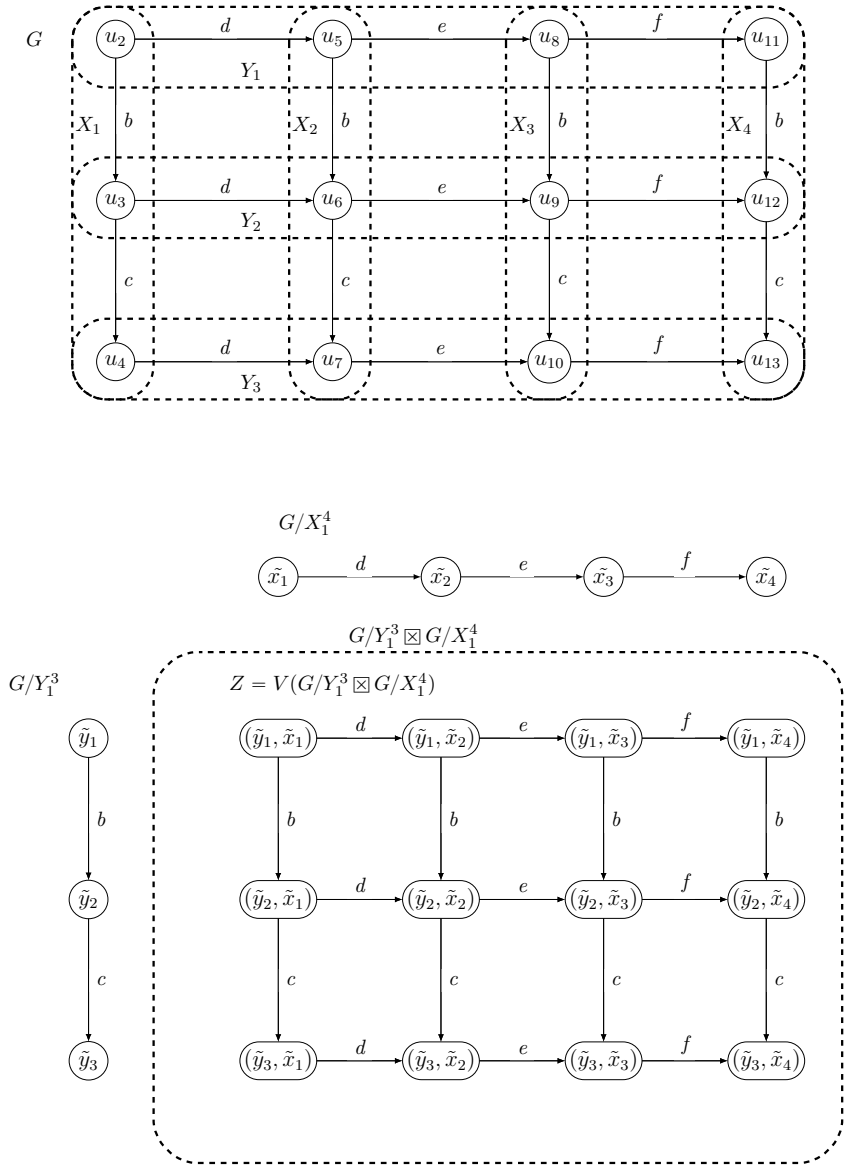


\begin{theorem}\label{theorem_6}
Let $G$ be a Cartesian matrix graph where $V(G)$ consists of nonempty pairwise disjoint subsets $R_i=\{v_{i,j}\mid j\in J_C\subseteq J\},i\in I_R\subseteq I$, and nonempty pairwise disjoint subsets $C_j=\{v_{i,j}\mid i\in I_R\subseteq I\},j\in J_C\subseteq J,|I_R|=m_1,|J_C|=n_1,$, with $V(G)=\bigcup\limits_{i\in I_R}R_i=\bigcup\limits_{j\in J_C}C_j$, for which $G[R_x]\cong G[R_y],x,y\in I_R$, $G[C_x]\cong G[C_y],x,y\in J_C$, the arcs of $A_R=\bigcup\limits_{x\in I_R}A[R_x]$ and the arcs of $A_C=\bigcup\limits_{y\in I_C}A[C_y]$ have no labels in common and there are no other arcs in $A(G)$ than the arcs of $A_R$ and the arcs of $A_C$.
Then $G/_{i\in I_R}R_i\boxbackslash G/_{j\in J_C}C_j\cong G$.
\end{theorem}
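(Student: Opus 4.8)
The plan is to show first that the vertex-removing synchronised product collapses to the ordinary Cartesian product for the two factors at hand, and then to exhibit the expected grid isomorphism $\phi(v_{i,j})=(\tilde{y}_i,\tilde{x}_j)$, mirroring the structure of the proof of Theorem~\ref{theorem_5}. Write $H_R=G/_{i\in I_R}R_i$ and $H_C=G/_{j\in J_C}C_j$. First I would describe the two factors explicitly. Contracting every row $R_i$ deletes all arcs of $A_R$ (they have both ends in one row) and sends each column arc $v_{i,j}v_{k,j}$ of $A_C$ to an arc $\tilde{y}_i\tilde{y}_k$; because $G[C_x]\cong G[C_y]$ for all $x,y\in J_C$, read as an index-preserving isomorphism compatible with the numbering $v_{i,j}$, the arcs coming from different columns carry identical labels and are merged, so $H_R$ is well defined with vertex set $\{\tilde{y}_i\mid i\in I_R\}$, arc labels drawn from $L_C$, and arc set a single copy of the column structure. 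Symmetrically, $H_C$ has vertex set $\{\tilde{x}_j\mid j\in J_C\}$, arc labels in $L_R$, and arc set a single copy of the row structure.

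Next I would reduce the VRSP to the Cartesian product. In $H_R\Box H_C$ the first-factor arcs carry labels in $L_C$ and the second-factor arcs carry labels in $L_R$; since $A_R$ and $A_C$ share no labels by hypothesis, no pair of arcs from the two factors is synchronising. Hence every arc of $H_R\Box H_C$ is asynchronous and is kept, no synchronous arc is created, and therefore $H_R\boxtimes H_C=H_R\Box H_C$. As the intermediate product coincides with the Cartesian product, the in- and out-degrees in the two graphs are identical, so the vertex-removal step of the VRSP deletes nothing, giving $H_R\boxbackslash H_C=H_R\Box H_C$. This is the directed, labelled analogue of recognising $G$ as a Cartesian product, and it is the step that makes the theorem essentially a Cartesian-product decomposition rather than a genuinely VRSP-specific one.

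It then remains to prove $G\cong H_R\Box H_C$. The vertex set of $H_R\Box H_C$ is $\{(\tilde{y}_i,\tilde{x}_j)\mid i\in I_R,\ j\in J_C\}$, of cardinality $m_1 n_1$, which equals $|V(G)|$ because the sets $R_i$ partition $V(G)$ and each has $n_1$ vertices (equivalently $R_i\cap C_j=\{v_{i,j}\}$ is a single vertex, so $G$ is a full grid with no holes). Thus $\phi(v_{i,j})=(\tilde{y}_i,\tilde{x}_j)$ is a bijection, and unlike in Theorem~\ref{theorem_5} there are no spurious product vertices to remove. For arc preservation I would split into the two arc types forced by the hypothesis that every arc of $G$ lies inside a row or inside a column: a row arc $v_{i,j}v_{i,l}\in A_R$ with label $\ell$ contracts to the arc $\tilde{x}_j\tilde{x}_l$ of $H_C$ and hence reappears as the second-factor arc $(\tilde{y}_i,\tilde{x}_j)(\tilde{y}_i,\tilde{x}_l)=\phi(v_{i,j})\phi(v_{i,l})$, and symmetrically a column arc $v_{i,j}v_{k,j}\in A_C$ reappears as the first-factor arc $\phi(v_{i,j})\phi(v_{k,j})$, each with its label preserved. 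Conversely, every arc of $H_R\Box H_C$ is of first- or second-factor type and, using the label consistency established above, pulls back to a genuine arc of $G$ in the correct column or row; this yields the required one-to-one, label-preserving correspondence and completes the isomorphism.

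The main obstacle I anticipate is exactly this converse (no-spurious-arcs) direction, which hinges on the label of a column arc between rows $i$ and $k$ being the same in every column in which it occurs, and dually for row arcs. This consistency is what guarantees that the contractions $H_R$ and $H_C$ are single-valued, carrying at most one arc per ordered pair and label rather than a tangle of differently labelled parallel arcs, and it is where the hypotheses $G[R_x]\cong G[R_y]$ and $G[C_x]\cong G[C_y]$ must be used in their index-preserving form dictated by the grid numbering; without it the contracted factors could even fail to be acyclic. Once this consistency is in hand, the remaining verifications are routine bookkeeping.
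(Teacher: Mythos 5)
Your proposal is correct and follows essentially the same route as the paper: the same bijection $\phi(v_{i,j})=(\tilde{y}_i,\tilde{x}_j)$ onto the full $m_1\times n_1$ vertex grid of the product, with the disjointness of the row-arc and column-arc label sets guaranteeing that every product arc is asynchronous, no synchronisation occurs, and no vertices are removed. Your explicit preliminary step that $H_R\boxbackslash H_C=H_R\Box H_C$ is merely a cleaner packaging of what the paper's proof establishes implicitly, and your remark about needing the isomorphisms $G[C_x]\cong G[C_y]$ in index-preserving form to avoid differently labelled parallel arcs after contraction is a point the paper itself glosses over.
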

\begin{proof}
It clearly suffices to define a mapping $\phi: V(G)\rightarrow V(G/_{i\in I_R}R_i\boxbackslash G/_{j\in J_C}C_j)$ and to prove that $\phi$ is an isomorphism from $G$ to $G/_{i\in I_R}R_i\boxbackslash G/_{j\in J_C}C_j$.

Let $\tilde{x}'_i$ be the new vertex replacing the set $R_i$ and let $\tilde{x}''_j$  be the new vertex replacing the set $C_j$, when defining $G/_{i\in I_R}R_i$ and $G/_{j\in J_C}C_j$, respectively.
Consider the mapping $\phi: V(G)\rightarrow V(G/_{i\in I_R}R_i\boxbackslash G/_{j\in J_C}C_j)$ defined by  $\phi(v_{i,j})=(\tilde{x}'_i,\tilde{x}''_j)$ for all $v_{i,j}\in V(G)$.
   
\noindent Then $\phi$ is obviously a bijection if  $V(G/_{i\in I_R}R_i\boxbackslash G/_{j\in J_C}C_j)=Z$, where $Z$ is defined as $Z=\{(\tilde{x}'_i,\tilde{x}''_j)\mid \phi(v_{i,j})=(\tilde{x}_i,\tilde{x}_j), v_{i,j}\in V(G)\}$.
Furthermore, the set of vertices of $Z$ is identical to the set of vertices of $G/_{i\in I_R}R_i\boxtimes G/_{j\in J_C}C_j$. 

We start with proving that the contraction of $R_i$ to $\tilde{x}'_i$ and the contraction of $R_j$  to $\tilde{x}'_j$ for $i\neq j,$ implies $\tilde{x}'_i\neq \tilde{x}'_j$ and that the contraction of $C_j$ to $\tilde{x}''_j$ and the contraction of $C_k$  to $\tilde{x}''_k$ for $j\neq k,$ implies $\tilde{x}''_j\neq \tilde{x}''_k$.
Because $R_i$ is the row with all vertices $v_{i,k}$ of $V(G)$ (by hypothesis) the vertices of $R_i$ are replaced by $\tilde{x}'_i$ and $R_j$ is the row with all vertices $v_{j,k}$ of $V(G)$ (by hypothesis) the vertices of $R_j$ are replaced by $\tilde{x}'_j$, and $R_i\cap R_j=\emptyset$ for $i\neq j$, we have that the contraction of $R_i$ to $\tilde{x}'_i$ and the contraction of $R_j$  to $\tilde{x}'_j$ for $i\neq j$ implies $\tilde{x}'_i\neq \tilde{x}'_j$. 
Likewise, Because $C_j$ is the column with all vertices $v_{i,j}$ of $V(G)$ (by hypothesis) the vertices of $C_j$ are replaced by $\tilde{x}''_j$ and $C_k$ is the column with all vertices $v_{i,k}$ of $V(G)$ (by hypothesis) the vertices of $C_k$ are replaced by $\tilde{x}''_k$, and $C_j\cap C_k=\emptyset$ for $j\neq k$, we have that the contraction of $C_k$  to $\tilde{x}''_k$ for $j\neq k,$ implies $\tilde{x}''_j\neq \tilde{x}''_k$. 

Because $Z$ consists of vertices $(\tilde{x}'_i,\tilde{x}''_j)$ only and $\phi$ maps $v_{i,j}$ onto $(\tilde{x}'_i,\tilde{x}''_j)$, and if $v_{i_1,j_1}\neq v_{i_2,j_2}$ then  $(\tilde{x}'_{i_1},\tilde{x}''_{i_2})\neq (\tilde{x}'_{j_1}\tilde{x}''_{j_2})$, $v_{i_1,j_1}, v_{i_2,j_2}\in V(G)$ we have that $\phi$ is a bijection from $V(G)$ to $Z$. 
It remains to show that this bijection preserves the arcs and their labels. 
By hypothesis, the arcs of the rows $R_i$ of $G$ are asynchronous with respect to the arcs of the columns $C_j$ of $G$ and by hypothesis we have only arcs $a\in A(G)$ with $\mu(a)=(u_{i,j},u_{i,k})$ for $u_{i,j}\in R_i$, $u_{i,k}\in R_i$ and arcs $a\in A(G)$ with $\mu(a)=(u_{i,k},u_{j,k})$ for $u_{i,k}\in C_k$, $u_{j,k}\in C_k$.
Hence, together with the definition of the Cartesian product, for each arc $a\in A(G)$ with $\mu(a)=(u_{i,j},u_{i,k})$ for $u_{i,j}\in R_i$, $u_{i,k}\in R_i$, there exists an arc $b$ in $G/_{i\in I_R}R_i\boxtimes G/_{j\in J_C}C_j$ with $\mu(b)=((\tilde{x}'_i,\tilde{x}''_j),(\tilde{x}'_i,\tilde{x}'_k))=(\phi(u_{i,j}),\phi(u_{i,k}))$ and $\lambda(b)=\lambda(a)$. 
Likewise, for each arc $a\in A(G)$ with $\mu(a)=(u_{i,k},u_{j,k})$ for $u_{i,k}\in C_k$, $u_{j,k}\in C_k$, there exists an arc $b$ in $G/_{i\in I_R}R_i\boxtimes G/_{j\in J_C}C_j$ with $\mu(b)=((\tilde{x}'_i,\tilde{x}''_k),(\tilde{x}'_j,\tilde{x}''_k))=(\phi(u_{i,k}),\phi(u_{j,k}))$ and $\lambda(b)=\lambda(a)$.

Because $G$ is acyclic, the above arcs are the only arcs in $G/_{i\in I_R}R_i\boxtimes G/_{j\in J_C}C_j$ induced by the vertices of $Z$.
Furthermore, there are no other vertices in $G/_{i\in I_R}R_i\boxtimes G/_{j\in J_C}C_j$ than the vertices of $Z$, because all vertices of $Z$ are of the type $(\tilde{x}'_i,\tilde{x}''_j)$ (for the head and the tail of asynchronous arcs).
This completes the proof of Theorem~\ref{theorem_6}. 
\end{proof}
Note that the decomposition by Theorem~\ref{theorem_6} iteratively decomposes any graph $G$ that is the product of graphs $G_1,\ldots,G_n$, $G\cong \overundersyncprod{i=1}{{n}}G_i$, that do not share a label.
We call a matrix graph that is decomposable by Theorem~\ref{theorem_6} a \emph{VRSP-decomposable Cartesian matrix graph} and we call a subgraph $G'$ of a matrix graph $G$ a \emph{maximal VRSP-decomposable Cartesian matrix subgraph} if $G'$ is a VRSP-decomposable Cartesian matrix graph and there is no subgraph $G''$ of $G$ where $G''$ is a VRSP-decomposable Cartesian matrix graph and $G'$ is a proper subgraph of $G''$.

We continue with a decomposition theorem where we use implicitly both Theorem~\ref{theorem_5} and Theorem~\ref{theorem_6}.
The graphs containing maximal VRSP-decomposable 
 Cartesian matrix subgraphs and VRSP-decomposable 
 semicomplete bipartite matrix subgraphs cannot be decomposed by either Theorem~\ref{theorem_5} or Theorem~\ref{theorem_6}.
In Figure~\ref{FourthDecomposition}, we give an example where the vertices are numbered according to the matrix scheme for maximal VRSP-decomposable 
 Cartesian matrix subgraphs and VRSP-decomposable 
 semicomplete bipartite matrix subgraphs.
 This scheme leads to five rows and six columns for which the contraction of the rows produces the graph $G/_{i=1}^5R_i$ and the contraction of the columns produces the graph $G/_{i=1}^6C_i$. 
 The VRSP of these two graphs gives the graph $G/_{i=1}^5R_i\boxbackslash G/_{i=1}^6C_i$ which is isomorphic to $G$.
In Theorem~\ref{theorem_7}, we state and proof the scheme described in Figure~\ref{FourthDecomposition}.
 
 \begin{theorem}\label{theorem_7}
Let $G$ be a matrix graph consisting solely of a set of
 maximal VRSP-decomposable 
 Cartesian matrix subgraphs $G_{M}$ of $G$ and a set of 
 VRSP-decomposable 
 semicomplete bipartite matrix subgraphs $G_B$ of $G$ where each semicomplete bipartite subgraph is arc-induced by a set of all arcs of $G$ with identical labels.
Let any subgraph $G_{M_{1}}$ of $G_M$ and any subgraph $G_{M_{2}}$ of $G_M$ with $ V(G_{M_1})\cap V(G_{M_2})=\emptyset$ and the subgraphs of $G_B$ have no labels in common.
Let there be no arc $a$ of $G_B$ with $\mu(a)=v_{i,j}v_{i,k}$ and $v_{i,j},v_{i,k}$ in any $V(G_{M_x})$ of $G_M$ and let there be no arc $a$ of $G_B$ with $\mu(a)=v_{i,j}v_{k,j}$ and $v_{i,j},v_{k,j}$ in any $V(G_{M_y})$ of $G_M$.
If each row $R_x$ of $G$ that contains the vertex $v_{i,j}$ has the index $i$ and
if each column $C_y$ of $G$ that contains the vertex $v_{i,j}$ has the index $j$ then $G\cong G/_{i=1}^{m}R_i\boxbackslash G/_{j=1}^{n}C_j$.
\end{theorem}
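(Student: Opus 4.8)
The plan is to follow the template established in the proofs of Theorems~\ref{theorem_5} and~\ref{theorem_6}, defining the mapping $\phi\colon V(G)\rightarrow V(G/_{i=1}^{m}R_i\boxbackslash G/_{j=1}^{n}C_j)$ by $\phi(v_{i,j})=(\tilde{x}'_i,\tilde{x}''_j)$, where $\tilde{x}'_i$ is the vertex replacing the row $R_i$ and $\tilde{x}''_j$ is the vertex replacing the column $C_j$, and then showing that $\phi$ is an isomorphism. First I would record that, by the two indexing hypotheses (each row $R_x$ through $v_{i,j}$ carries index $i$ and each column $C_y$ through $v_{i,j}$ carries index $j$), the rows $R_i$ and columns $C_j$ are well defined and pairwise disjoint, so that the contractions yield distinct vertices: $i\neq j$ implies $\tilde{x}'_i\neq\tilde{x}'_j$, and likewise for the columns. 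This is the exact analogue of the distinctness arguments used in both earlier theorems and makes $\phi$ a bijection from $V(G)$ onto the set $Z=\{(\tilde{x}'_i,\tilde{x}''_j)\mid v_{i,j}\in V(G)\}$.

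The core of the argument is to establish, as in Claim~\ref{claim3}, that the subgraph of $G/_{i=1}^{m}R_i\boxtimes G/_{j=1}^{n}C_j$ induced by $Z$ is isomorphic to $G$, and for this I would partition $A(G)$ into the arcs $A_M$ of the Cartesian matrix subgraphs $G_M$ and the arcs $A_B$ of the semicomplete bipartite subgraphs $G_B$, handling each class by the mechanism of the theorem that governs it. For an arc $a\in A_B$ with $\mu(a)=(v_{i,j},v_{k,l})$, the two contractions produce a row version with incidence $(\tilde{x}'_i,\tilde{x}'_k)$ in $G/_{i=1}^{m}R_i$ and a column version with incidence $(\tilde{x}''_j,\tilde{x}''_l)$ in $G/_{j=1}^{n}C_j$ carrying the same label, so these form a synchronising pair that merges into the single arc $(\tilde{x}'_i,\tilde{x}''_j)(\tilde{x}'_k,\tilde{x}''_l)$ exactly as in Theorem~\ref{theorem_5}. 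For an arc $a\in A_M$, the Cartesian matrix structure forces $a$ to lie entirely inside a row or entirely inside a column; since the row labels and column labels of $G_M$ are disjoint and disjoint from all bipartite labels, $a$ survives as an asynchronous arc of the intermediate product landing between the correct grid points, exactly as in Theorem~\ref{theorem_6}. The two hypotheses forbidding an arc of $G_B$ from lying inside a row or a column of any $G_{M_x}$ are precisely what guarantee that these two regimes never collide, so that each arc of $G$ is reproduced once and only once and $\phi$ preserves arcs and labels on $Z$.

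It then remains to verify that $Z$ is the entire vertex set of the VRSP, i.e.\ that every vertex $(\tilde{x}'_s,\tilde{x}''_t)$ of the Cartesian product $G/_{i=1}^{m}R_i\Box G/_{j=1}^{n}C_j$ with no corresponding $v_{s,t}\in V(G)$ is deleted. Here I would argue as at the end of the proof of Theorem~\ref{theorem_5}: if such a phantom vertex survived, then the label set of the arcs with head $\tilde{x}'_s$ and the label set of the arcs with head $\tilde{x}''_t$ would share a label, which would in turn force an arc of $G$ with head $v_{s,t}$ and hence $v_{s,t}\in V(G)$, a contradiction. Thus all in-arcs of every phantom vertex are asynchronous, every phantom vertex acquires in-degree $0$ in the intermediate product while having positive in-degree in the Cartesian product, and the vertex-removing step of the VRSP eliminates it. Combining this with the induced isomorphism on $Z$ yields $Z=V(G/_{i=1}^{m}R_i\boxbackslash G/_{j=1}^{n}C_j)$ and therefore $G\cong G/_{i=1}^{m}R_i\boxbackslash G/_{j=1}^{n}C_j$.

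The step I expect to be the main obstacle is the clean separation of the two arc regimes inside the single contracted pair of graphs: one must check that after contracting all rows (resp.\ all columns) the synchronising arcs coming from $G_B$ and the asynchronous arcs coming from $G_M$ do not acquire coincidental shared labels or spurious multiplicities, and that the vertex-removal step deletes exactly the grid points created by the non-complete bipartite blocks without disturbing any genuine vertex. Verifying that the combined label-disjointness hypotheses are strong enough to exclude every such interference, in particular across two different maximal Cartesian subgraphs that happen to share a row index or a column index, is where the bookkeeping is most delicate.
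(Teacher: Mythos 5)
Your overall architecture coincides with the paper's: the same map $\phi(v_{i,j})=(\tilde{x}'_i,\tilde{x}''_j)$, the same bijection onto $Z$, and the same splitting of $A(G)$ into the bipartite arcs (which reappear as synchronising pairs and merge, as in Theorem~\ref{theorem_5}) and the Cartesian-matrix arcs (which reappear as asynchronous arcs, as in Theorem~\ref{theorem_6}). That part of your argument is sound and matches the paper's Claim~\ref{claim4}.

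The gap is in your final step. You import the closing argument of Theorem~\ref{theorem_5} verbatim and conclude that every phantom vertex $(\tilde{x}'_s,\tilde{x}''_t)$ with $v_{s,t}\notin V(G)$ ``acquires in-degree $0$ in the intermediate product.'' That conclusion is valid in Theorem~\ref{theorem_5} only because there \emph{every} arc of both factors is synchronising, so the intermediate product retains no asynchronous arcs and the in-degree of a phantom vertex is governed entirely by matching label pairs. In Theorem~\ref{theorem_7} this is false: an asynchronous arc $b$ of $G/_{i=1}^{m}R_i$ with head $\tilde{x}'_k$ (coming from a column arc of some $G_{M_x}$, whose label does not occur in $G/_{j=1}^{n}C_j$) is retained in $G/_{i=1}^{m}R_i\boxtimes G/_{j=1}^{n}C_j$ as an arc into $(\tilde{x}'_k,\tilde{x}''_t)$ for \emph{every} $t$, including values of $t$ with $v_{k,t}\notin V(G)$. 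So phantom vertices can have strictly positive in-degree in the intermediate product (this already happens in the example of Figure~\ref{FourthDecomposition}, e.g.\ at $(\tilde{x}'_3,\tilde{x}''_1)$, which receives the asynchronous arc labelled $b$ from the phantom vertex $(\tilde{x}'_2,\tilde{x}''_1)$), and a one-shot deletion argument does not eliminate them. What is needed—and what the paper supplies—is an iterative argument: one shows that no arc of the intermediate product runs from a vertex of $Z$ to a phantom vertex (otherwise it would correspond to an arc of $G$ whose head would then lie in $Z$), so every in-arc of a phantom vertex originates at another phantom vertex; backtracking such a path must terminate at a phantom vertex of level $0$, which the VRSP removes, and the removal then propagates along the path until all phantom vertices are gone. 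Without this cascading argument your proof does not establish $Z=V(G/_{i=1}^{m}R_i\boxbackslash G/_{j=1}^{n}C_j)$. (Incidentally, the place you flag as the main obstacle—accidental label collisions between the two regimes—is dispatched directly by the label-disjointness hypotheses; the real delicacy sits in this vertex-removal step.)
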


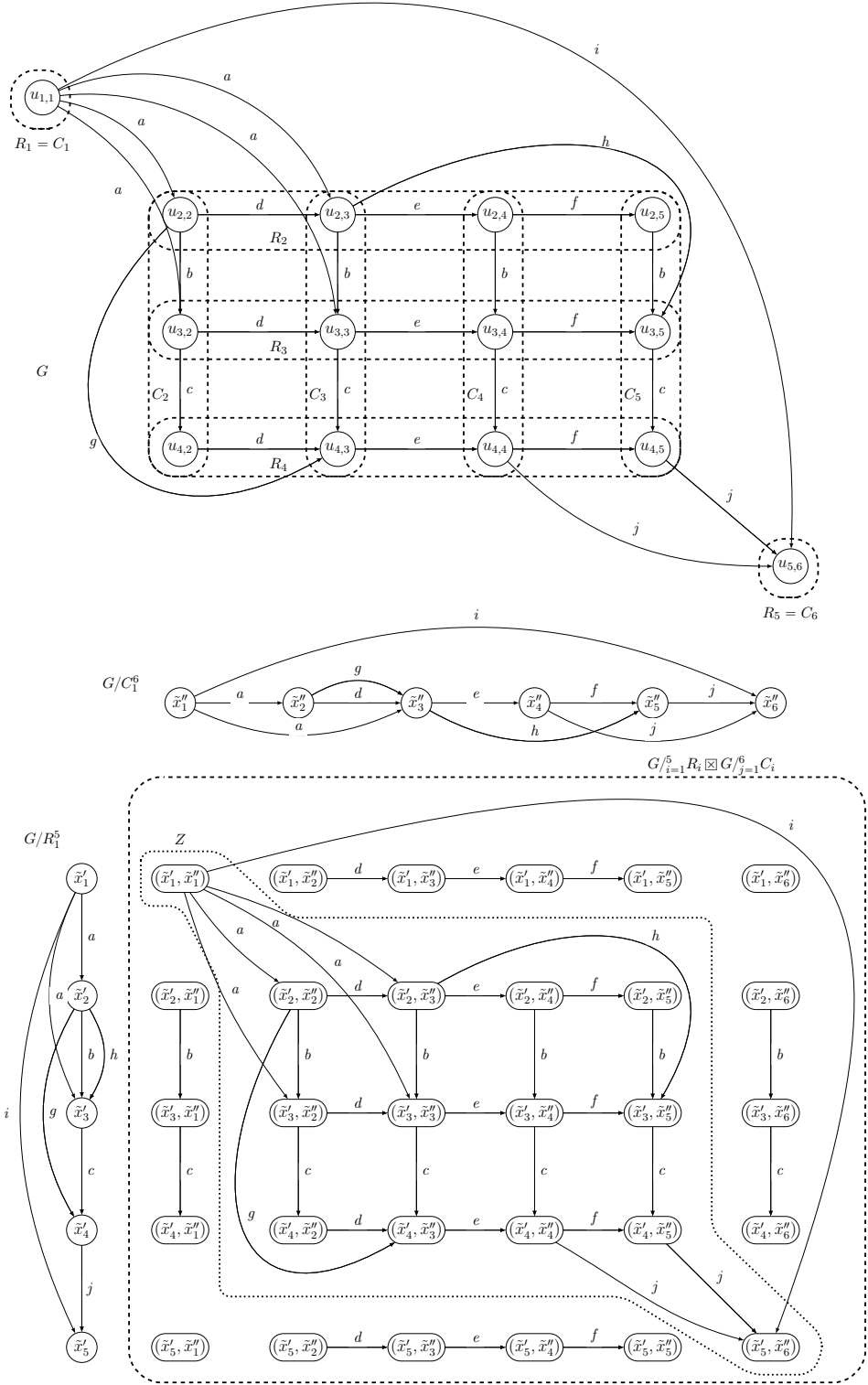
\begin{figure}[H]
\begin{center}
\resizebox{0.95\textwidth}{!}{
\begin{tikzpicture}[->,>=latex,shorten >=0pt,auto,node distance=2.5cm,
  main node/.style={circle,fill=blue!10,draw, font=\sffamily\Large\bfseries}]
  \tikzset{VertexStyle/.append style={
  font=\itshape\large, shape = circle,inner sep = 2pt, outer sep = 0pt,minimum size = 20 pt,draw}}
  \tikzset{EdgeStyle/.append style={thin}}
  \tikzset{LabelStyle/.append style={font = \itshape}}
  \SetVertexMath
  \def\x{0.0}
  \def\y{1.0}
\node at (\x-1.5,3+\y+2) {$G$};
\node at (\x-1.5,\y+10.8) {$R_1=C_1$};
\node at (\x-1.5+19,\y-1.2) {$R_5=C_6$};
\node at (\x+4.5,\y+8.4) {$R_2$};
\node at (\x+1.5,\y+4.4) {$C_2$};
\node at (\x+4.5,\y+5.6) {$R_3$};
\node at (\x+5.5,\y+4.4) {$C_3$};
\node at (\x+4.5,\y+3-0.4) {$R_4$};
\node at (\x+9.5,\y+4.4) {$C_4$};
\node at (\x+13.5,\y+4.4) {$C_5$};
\node at (\x+0.5,\y-1-2) {$G/C_1^6$};
\node at (\x-1.5,\y-5-2) {$G/R_1^5$};
\node at (\x+2,\y-5-2) {$Z$};
\node at (\x+15.5,\y-3.1-2) {$G/_{i=1}^5R_i\boxtimes G/_{j=1}^6C_i$};
  \def\x{0.5}
  \def\y{9.0+1}
  \Vertex[x=\x+-2, y=\y+3.0,L={u_{1,1}}]{u_1}
  \Vertex[x=\x+1.5, y=\y+0.0,L={u_{2,2}}]{u_2}
  \Vertex[x=\x+1.5, y=\y-3.0,L={u_{3,2}}]{u_3}
  \Vertex[x=\x+1.5, y=\y-6.0,L={u_{4,2}}]{u_4}
  \Vertex[x=\x+5.5, y=\y+0.0,L={u_{2,3}}]{u_5}
  \Vertex[x=\x+5.5, y=\y-3,L={u_{3,3}}]{u_6}
  \Vertex[x=\x+5.5, y=\y-6,L={u_{4,3}}]{u_7}
  \Vertex[x=\x+9.5, y=\y+0.0,L={u_{2,4}}]{u_8}
  \Vertex[x=\x+9.5, y=\y-3.0,L={u_{3,4}}]{u_9}
  \Vertex[x=\x+9.5, y=\y-6.0,L={u_{4,4}}]{u_10}
  \Vertex[x=\x+13.5, y=\y+0,L={u_{2,5}}]{u_11}
  \Vertex[x=\x+13.5, y=\y-3,L={u_{3,5}}]{u_12}
  \Vertex[x=\x+13.5, y=\y-6,L={u_{4,5}}]{u_13}
  \Vertex[x=\x+17, y=\y-9.0,L={u_{5,6}}]{u_14}

  \Edge[label = b](u_2)(u_3) 
  \Edge[label = c](u_3)(u_4) 
  \Edge[label = b](u_5)(u_6) 
  \Edge[label = c](u_6)(u_7) 
  \Edge[label = b](u_8)(u_9) 
  \Edge[label = c](u_9)(u_10) 
  \Edge[label = b](u_11)(u_12) 
  \Edge[label = c](u_12)(u_13) 
  \Edge[label = d](u_2)(u_5) 
  \Edge[label = e](u_5)(u_8) 
  \Edge[label = f](u_8)(u_11) 
  \Edge[label = d](u_3)(u_6) 
  \Edge[label = e](u_6)(u_9) 
  \Edge[label = f](u_9)(u_12) 
  \Edge[label = d](u_4)(u_7) 
  \Edge[label = e](u_7)(u_10) 
  \Edge[label = f](u_10)(u_13) 
  \Edge[label = j](u_13)(u_14) 
  
  \Edge(u_2)(u_3) 
  \Edge(u_3)(u_4) 
  \Edge(u_5)(u_6) 
  \Edge(u_6)(u_7) 
  \Edge(u_8)(u_9) 
  \Edge(u_9)(u_10) 
  \Edge(u_11)(u_12) 
  \Edge(u_12)(u_13) 
  \Edge(u_2)(u_5) 
  \Edge(u_5)(u_8) 
  \Edge(u_8)(u_11) 
  \Edge(u_3)(u_6) 
  \Edge(u_6)(u_9) 
  \Edge(u_9)(u_12) 
  \Edge(u_4)(u_7) 
  \Edge(u_7)(u_10) 
  \Edge(u_10)(u_13) 
  \Edge(u_13)(u_14)   
  
 \Edge[label = j, labelstyle={xshift=0pt, yshift=2pt}, style={in = -180, out = -45,min distance=1cm}](u_10)(u_14)
\Edge[label = i, labelstyle={xshift=0pt, yshift=2pt}, style={bend right=-60,min distance=12cm}](u_1)(u_14)
\Edge[label = a, labelstyle={xshift=0pt, yshift=2pt}, style={bend right=-30,min distance=1cm}](u_1)(u_2)
\Edge[label = a, labelstyle={xshift=-30pt, yshift=-5pt}, style={bend right=-30,min distance=1cm}](u_1)(u_3)
\Edge[label = a, labelstyle={xshift=0pt, yshift=2pt}, style={bend right=-45,min distance=1cm}](u_1)(u_5)
\Edge[label = a, labelstyle={xshift=0pt, yshift=2pt}, style={bend right=-45,min distance=2cm}](u_1)(u_6)
\Edge[label = g, labelstyle={xshift=-16pt, yshift=-4pt}, style={in = 210, out=225,min distance=6cm}](u_2)(u_7)
  \def\x{4.0}
  \def\y{-2.5}
  \Vertex[x=\x+-2, y=\y+0.0,L={\tilde{x}''_1}]{u_1}
  \Vertex[x=\x+1, y=\y+0.0,L={\tilde{x}''_2}]{s_1}
  \Vertex[x=\x+4, y=\y+0.0,L={\tilde{x}''_3}]{s_2}
  \Vertex[x=\x+7, y=\y+0.0,L={\tilde{x}''_4}]{s_3}
  \Vertex[x=\x+10, y=\y+0.0,L={\tilde{x}''_5}]{s_4}
  \Vertex[x=\x+13, y=\y+0.0,L={\tilde{x}''_6}]{u_14}
  
  \Edge[label = a](u_1)(s_1) 
  \Edge[label = d](s_1)(s_2) 
  \Edge[label = e](s_2)(s_3) 
  \Edge[label = f](s_3)(s_4) 
  \Edge[label = j](s_4)(u_14) 
  \Edge[label = j, labelstyle={xshift=0pt, yshift=0pt}, style={in = -150, out = -30,min distance=1cm}](s_3)(u_14)
\Edge[label = i, labelstyle={xshift=0pt, yshift=2pt}, style={bend right=-25,min distance=1cm}](u_1)(u_14)
\Edge[label = a, labelstyle={xshift=0pt, yshift=0pt}, style={bend left=-25,min distance=1cm}](u_1)(s_2)
\Edge[label = g, labelstyle={xshift=0pt, yshift=-0pt}, style={in = 150, out=30,min distance=1cm}](s_1)(s_2)
\Edge[label = h, labelstyle={xshift=0pt, yshift=-0pt}, style={in = -150, out=-30,min distance=1cm}](s_2)(s_4)
\Edge[style={in = 150, out=30,min distance=1cm}](s_1)(s_2)
\Edge[style={in = -150, out=-30,min distance=1cm}](s_2)(s_4)

  \def\x{-0.5}
  \def\y{-7.0}
  \Vertex[x=\x+0, y=\y+0.0,L={\tilde{x}'_1}]{u_1}
  \Vertex[x=\x+0, y=\y-3.0,L={\tilde{x}'_2}]{t_1}
  \Vertex[x=\x+0, y=\y-6.0,L={\tilde{x}'_3}]{t_2}
  \Vertex[x=\x+0, y=\y-9.0,L={\tilde{x}'_4}]{t_3}
  \Vertex[x=\x+0, y=\y-12.0,L={\tilde{x}'_5}]{u_14}

  \Edge[label = a](u_1)(t_1) 
  \Edge[label = b](t_1)(t_2) 
  \Edge[label = c](t_2)(t_3) 
  \Edge[label = j](t_3)(u_14) 
\Edge[label = i, labelstyle={xshift=-16pt, yshift=0pt}, style={bend right=25,min distance=1cm}](u_1)(u_14)
\Edge[label = a, labelstyle={xshift=0pt, yshift=0pt}, style={bend right=25,min distance=1cm}](u_1)(t_2)
\Edge[label = h, labelstyle={xshift=-16pt, yshift=-0pt}, style={in = 60, out=30,min distance=5.5cm}](u_5)(u_12)
  \Edge(u_1)(t_1) 
  \Edge(t_1)(t_2) 
  \Edge(t_2)(t_3) 
  \Edge(t_3)(u_14) 
  \Edge[style={in = 210, out=225,min distance=6cm}](u_2)(u_7)
  \Edge[style={in = 60, out=30,min distance=5.5cm}](u_5)(u_12)
\Edge[label = g, labelstyle={xshift=0pt, yshift=-0pt}, style={in = 120, out=-120,min distance=1cm}](t_1)(t_3)
\Edge[label = h, labelstyle={xshift=0pt, yshift=-0pt}, style={in = 60, out=-60,min distance=1cm}](t_1)(t_2)
\Edge[style={in = 120, out=-120,min distance=1cm}](t_1)(t_3)
\Edge[style={in = 60, out=-60,min distance=1cm}](t_1)(t_2)

\tikzset{VertexStyle/.append style={
  font=\itshape\large,shape = rounded rectangle,inner sep = 0pt, outer sep = 0pt,minimum size = 20 pt,draw}}

  \def\x{2.0}
  \def\y{-7.0}
  \Vertex[x=\x+0.0, y=\y-0.0,L={(\tilde{x}'_1,\tilde{x}''_1)}]{u_1u_1}
  \Vertex[x=\x+3.0, y=\y-0.0,L={(\tilde{x}'_1,\tilde{x}''_2)}]{u_1s_1}
  \Vertex[x=\x+6.0, y=\y-0.0,L={(\tilde{x}'_1,\tilde{x}''_3)}]{u_1s_2}
  \Vertex[x=\x+9.0, y=\y-0.0,L={(\tilde{x}'_1,\tilde{x}''_4)}]{u_1s_3}
  \Vertex[x=\x+12.0, y=\y-0.0,L={(\tilde{x}'_1,\tilde{x}''_5)}]{u_1s_4}
  \Vertex[x=\x+15.0, y=\y-0.0,L={(\tilde{x}'_1,\tilde{x}''_6)}]{u_1u_14}

  \def\x{2.0}
  \def\y{-10.0}
  \Vertex[x=\x+0.0, y=\y-0.0,L={(\tilde{x}'_2,\tilde{x}''_1)}]{t_1u_1}
  \Vertex[x=\x+3.0, y=\y-0.0,L={(\tilde{x}'_2,\tilde{x}''_2)}]{t_1s_1}
  \Vertex[x=\x+6.0, y=\y-0.0,L={(\tilde{x}'_2,\tilde{x}''_3)}]{t_1s_2}
  \Vertex[x=\x+9.0, y=\y-0.0,L={(\tilde{x}'_2,\tilde{x}''_4)}]{t_1s_3}
  \Vertex[x=\x+12.0, y=\y-0.0,L={(\tilde{x}'_2,\tilde{x}''_5)}]{t_1s_4}
  \Vertex[x=\x+15.0, y=\y-0.0,L={(\tilde{x}'_2,\tilde{x}''_6)}]{t_1u_14}
  \def\x{2.0}
  \def\y{-13.0}
  \Vertex[x=\x+0.0, y=\y-0.0,L={(\tilde{x}'_3,\tilde{x}''_1)}]{t_2u_1}
  \Vertex[x=\x+3.0, y=\y-0.0,L={(\tilde{x}'_3,\tilde{x}''_2)}]{t_2s_1}
  \Vertex[x=\x+6.0, y=\y-0.0,L={(\tilde{x}'_3,\tilde{x}''_3)}]{t_2s_2}
  \Vertex[x=\x+9.0, y=\y-0.0,L={(\tilde{x}'_3,\tilde{x}''_4)}]{t_2s_3}
  \Vertex[x=\x+12.0, y=\y-0.0,L={(\tilde{x}'_3,\tilde{x}''_5)}]{t_2s_4}
  \Vertex[x=\x+15.0, y=\y-0.0,L={(\tilde{x}'_3,\tilde{x}''_6)}]{t_2u_14}
  \def\x{2.0}
  \def\y{-16.0}
  \Vertex[x=\x+0.0, y=\y-0.0,L={(\tilde{x}'_4,\tilde{x}''_1)}]{t_3u_1}
  \Vertex[x=\x+3.0, y=\y-0.0,L={(\tilde{x}'_4,\tilde{x}''_2)}]{t_3s_1}
  \Vertex[x=\x+6.0, y=\y-0.0,L={(\tilde{x}'_4,\tilde{x}''_3)}]{t_3s_2}
  \Vertex[x=\x+9.0, y=\y-0.0,L={(\tilde{x}'_4,\tilde{x}''_4)}]{t_3s_3}
  \Vertex[x=\x+12.0, y=\y-0.0,L={(\tilde{x}'_4,\tilde{x}''_5)}]{t_3s_4}
  \Vertex[x=\x+15.0, y=\y-0.0,L={(\tilde{x}'_4,\tilde{x}''_6)}]{t_3u_14}
    \def\x{2.0}
  \def\y{-19.0}
  \Vertex[x=\x+0.0, y=\y-0.0,L={(\tilde{x}'_5,\tilde{x}''_1)}]{u_14u_1}
  \Vertex[x=\x+3.0, y=\y-0.0,L={(\tilde{x}'_5,\tilde{x}''_2)}]{u_14s_1}
  \Vertex[x=\x+6.0, y=\y-0.0,L={(\tilde{x}'_5,\tilde{x}''_3)}]{u_14s_2}
  \Vertex[x=\x+9.0, y=\y-0.0,L={(\tilde{x}'_5,\tilde{x}''_4)}]{u_14s_3}
  \Vertex[x=\x+12.0, y=\y-0.0,L={(\tilde{x}'_5,\tilde{x}''_5)}]{u_14s_4}
  \Vertex[x=\x+15.0, y=\y-0.0,L={(\tilde{x}'_5,\tilde{x}''_6)}]{u_14u_14}

\Edge[label = a, labelstyle={xshift=0pt, yshift=2pt}, style={bend right=10,min distance=1cm}](u_1u_1)(t_1s_1)
\Edge[label = a, labelstyle={xshift=-30pt, yshift=-5pt}, style={bend right=-10,min distance=1cm}](u_1u_1)(t_1s_2)
\Edge[label = a, labelstyle={xshift=0pt, yshift=2pt}, style={bend right=10,min distance=1cm}](u_1u_1)(t_2s_1)
\Edge[label = a, labelstyle={xshift=0pt, yshift=2pt}, style={bend right=-20,min distance=2cm}](u_1u_1)(t_2s_2)

  \Edge[label = j](t_3s_4)(u_14u_14) 
  \Edge[label = d](u_1s_1)(u_1s_2) 
  \Edge[label = e](u_1s_2)(u_1s_3) 
  \Edge[label = f](u_1s_3)(u_1s_4) 
  \Edge(t_3s_4)(u_14u_14) 
  \Edge(u_1s_1)(u_1s_2) 
  \Edge(u_1s_2)(u_1s_3) 
  \Edge(u_1s_3)(u_1s_4) 

  \Edge[label = d](t_1s_1)(t_1s_2) 
  \Edge[label = e](t_1s_2)(t_1s_3) 
  \Edge[label = f](t_1s_3)(t_1s_4) 
  \Edge(t_1s_1)(t_1s_2) 
  \Edge(t_1s_2)(t_1s_3) 
  \Edge(t_1s_3)(t_1s_4) 

  \Edge[label = d](t_2s_1)(t_2s_2) 
  \Edge[label = e](t_2s_2)(t_2s_3) 
  \Edge[label = f](t_2s_3)(t_2s_4) 
  \Edge(t_2s_1)(t_2s_2) 
  \Edge(t_2s_2)(t_2s_3) 
  \Edge(t_2s_3)(t_2s_4) 

  \Edge[label = d](t_3s_1)(t_3s_2) 
  \Edge[label = e](t_3s_2)(t_3s_3) 
  \Edge[label = f](t_3s_3)(t_3s_4) 
  \Edge(t_3s_1)(t_3s_2) 
  \Edge(t_3s_2)(t_3s_3) 
  \Edge(t_3s_3)(t_3s_4) 

  \Edge[label = d](u_14s_1)(u_14s_2) 
  \Edge[label = e](u_14s_2)(u_14s_3) 
  \Edge[label = f](u_14s_3)(u_14s_4) 
  \Edge(u_14s_1)(u_14s_2) 
  \Edge(u_14s_2)(u_14s_3) 
  \Edge(u_14s_3)(u_14s_4)

  \Edge[label = b](t_1u_1)(t_2u_1) 
  \Edge[label = b](t_1s_1)(t_2s_1) 
  \Edge[label = b](t_1s_2)(t_2s_2) 
  \Edge[label = b](t_1s_3)(t_2s_3)
  \Edge[label = b](t_1s_4)(t_2s_4)  
  \Edge[label = b](t_1u_14)(t_2u_14) 
  \Edge(t_1u_1)(t_2u_1) 
  \Edge(t_1s_1)(t_2s_1) 
  \Edge(t_1s_2)(t_2s_2) 
  \Edge(t_1s_3)(t_2s_3)
  \Edge(t_1s_4)(t_2s_4)  
  \Edge(t_1u_14)(t_2u_14) 
 
  \Edge[label = c](t_2u_1)(t_3u_1) 
  \Edge[label = c](t_2s_1)(t_3s_1) 
  \Edge[label = c](t_2s_2)(t_3s_2) 
  \Edge[label = c](t_2s_3)(t_3s_3)
  \Edge[label = c](t_2s_4)(t_3s_4)  
  \Edge[label = c](t_2u_14)(t_3u_14) 
  \Edge(t_2u_1)(t_3u_1) 
  \Edge(t_2s_1)(t_3s_1) 
  \Edge(t_2s_2)(t_3s_2) 
  \Edge(t_2s_3)(t_3s_3)
  \Edge(t_2s_4)(t_3s_4)  
  \Edge(t_2u_14)(t_3u_14)

  \Edge[label = j, labelstyle={xshift=0pt, yshift=0pt}, style={in = 165, out = -30,min distance=1cm}](t_3s_3)(u_14u_14)
 \Edge[label = i, labelstyle={xshift=0pt, yshift=2pt}, style={in = 70, out = 15,min distance=15.1cm}](u_1u_1)(u_14u_14)
\Edge[label = g, labelstyle={xshift=0pt, yshift=-0pt}, style={in = 210, out=240,min distance=5cm}](t_1s_1)(t_3s_2)
\Edge[label = h, labelstyle={xshift=0pt, yshift=-0pt}, style={in = 60, out=30,min distance=4.8cm}](t_1s_2)(t_2s_4)
\Edge[style={in = 210, out=240,min distance=5cm}](t_1s_1)(t_3s_2)
\Edge[style={in = 60, out=30,min distance=4.8cm}](t_1s_2)(t_2s_4)

  \def\x{1.7}
  \def\y{8.9+1}
\draw[circle, -,dashed, very thick,rounded corners=18pt] (\x-0.5,\y+0.0)--(\x-0.5,\y+0.7) --(\x+1.0,\y+0.7) -- (\x+1.0,\y-6.6) -- (\x-0.5,\y-6.6) --  (\x-0.5,\y+0.0);
  \def\x{5.7}
  \def\y{8.9+1}
\draw[circle, -,dashed, very thick,rounded corners=18pt] (\x-0.5,\y+0.0)--(\x-0.5,\y+0.7) --(\x+1.0,\y+0.7) -- (\x+1.0,\y-6.6) -- (\x-0.5,\y-6.6) --  (\x-0.5,\y+0.0);
  \def\x{9.7}
  \def\y{8.9+1}
\draw[circle, -,dashed, very thick,rounded corners=18pt] (\x-0.5,\y+0.0)--(\x-0.5,\y+0.7) --(\x+1.0,\y+0.7) -- (\x+1.0,\y-6.6) -- (\x-0.5,\y-6.6) --  (\x-0.5,\y+0.0);
  \def\x{13.7}
  \def\y{8.9+1}
\draw[circle, -,dashed, very thick,rounded corners=18pt] (\x-0.5,\y+0.0)--(\x-0.5,\y+0.7) --(\x+1.0,\y+0.7) -- (\x+1.0,\y-6.6) -- (\x-0.5,\y-6.6) --  (\x-0.5,\y+0.0);

  \def\x{1.7}
  \def\y{8.1+1}
\draw[circle, -,dashed, very thick,rounded corners=18pt] (\x+2.8,\y+0.0)--(\x-0.5,\y+0.0)--(\x-0.5,\y+1.5)--(\x+13.0,\y+1.5) --(\x+13.0,\y) -- (\x+2.8,\y+0.0);

  \def\x{1.7}
  \def\y{5.3+1}
\draw[circle, -,dashed, very thick,rounded corners=18pt] (\x+2.8,\y+0.0)--(\x-0.5,\y+0.0)--(\x-0.5,\y+1.5)--(\x+13.0,\y+1.5) --(\x+13.0,\y) -- (\x+2.8,\y+0.0);

  \def\x{1.7}
  \def\y{3-0.7+1}
\draw[circle, -,dashed, very thick,rounded corners=18pt] (\x+2.8,\y+0.0)--(\x-0.5,\y+0.0)--(\x-0.5,\y+1.5)--(\x+13.0,\y+1.5) --(\x+13.0,\y) -- (\x+2.8,\y+0.0);

  \def\x{17.2}
  \def\y{-0.8+1}
\draw[circle, -,dashed, very thick,rounded corners=18pt] (\x+0.5,\y+0.0)--(\x-0.5,\y+0.0)--(\x-0.5,\y+1.5)--(\x+1.0,\y+1.5) --(\x+1.0,\y) -- (\x+0.0,\y+0.0);

  \def\x{-1.8}
  \def\y{-0.8+13}
\draw[circle, -,dashed, very thick,rounded corners=18pt] (\x+0.5,\y+0.0)--(\x-0.5,\y+0.0)--(\x-0.5,\y+1.5)--(\x+1.0,\y+1.5) --(\x+1.0,\y) -- (\x+0.0,\y+0.0);

    \def\x{0.5}
  \def\y{9.2+1}

  \def\x{1.5}
  \def\y{-6.0}
\draw[circle, -,dotted, very thick,rounded corners=8pt] (\x-0.5,\y-0.5)--(\x-0.5,\y-0.3) --(\x+1.5,\y-0.3) -- (\x+3.3,\y-2.0) -- (\x+14,\y-2.0) -- (\x+14,\y-4.7)--(\x+14,\y-10)-- (\x+16.8,\y-12.7) -- (\x+16.8,\y-13.2) -- (\x+16.5,\y-13.7)  -- (\x+15,\y-13.7) -- (\x+11.75,\y-11.7)-- (\x+1.5,\y-11.7)-- (\x+1.5,\y-3.7)-- (\x+0.5,\y-1.7)-- (\x-0.5,\y-1.7)  --  (\x-0.5,\y-0.5);

  \def\x{1.5}
  \def\y{-6.4}
\draw[circle, -,dashed, very thick,rounded corners=8pt] (\x+0.2,\y+2)--(\x+17.4,\y+2) --(\x+17.9,\y+1.5) -- (\x+17.9,\y-13)-- (\x+17.4,\y-13.5) -- (\x-0.3,\y-13.5) -- (\x-0.8,\y-13) -- (\x-0.8,\y+1.5) -- (\x-0.3,\y+2)--(\x+0.1,\y+2);

\end{tikzpicture}
}
\end{center}
\caption{Decomposition of $G\cong G/_{i=1}^5R_i\boxbackslash G/_{j=1}^6C_i$. The set $Z$ from the proof of Theorem~\ref{theorem_6} and the graph isomorphic to $G$ induced by $Z$ in $G/_{i=1}^5R_i\boxtimes G/_{j=1}^6C_i$ are indicated within the dotted region (except for the arc with label $i$).}
  \label{FourthDecomposition}
\end{figure}

\begin{proof}
It clearly suffices to define a mapping $\phi: V(G)\rightarrow V(G/_{i=1}^{m}R_i\boxbackslash G/_{j=1}^{n}C_j)$ and to prove that $\phi$ is an isomorphism from $G$ to $G/_{i=1}^{m}R_i\boxbackslash G/_{j=1}^{n}C_j$.

Let $\tilde{x}'_i$ be the new vertex replacing the sets $R_i$ with $v_{i,j}\in R_i$, $\tilde{x}''_j$  be the new vertex replacing the set $C_j$ with $v_{i,j}\in C_j$, when defining $G/_{i=1}^{m}R_i$ and $G/_{j=1}^{n}C_j$, respectively.
Consider the mapping $\phi: V(G)\rightarrow V(G/_{i=1}^{m}R_i\boxbackslash G/_{j=1}^{n}C_j)$ defined by  $\phi(v_{i,j})=(\tilde{x}_i,\tilde{x}_j)$ for all $v_{i,j}\in V(G)$.
Then $\phi$ is obviously a bijection if  $V(G/_{i=1}^{m}R_i\boxbackslash G/_{j=1}^{n}C_j)=Z$, where $Z$ is defined as $Z=\{(\tilde{x}_i,\tilde{x}_j)\mid \phi(v_{i,j})=(\tilde{x}_i,\tilde{x}_j), v_{i,j}\in V(G)\}$. 
We are going to show this later by arguing that all the other vertices of $G/_{i=1}^{m}R_i\Box G/_{j=1}^{n}C_j$ will disappear from $G/_{i=1}^{m}R_i\boxtimes G/_{j=1}^{n}C_j$. But first we are going to prove the following claim.

\begin{claim}\label{claim4}
The subgraph of $G/_{i=1}^{m}R_i\boxtimes G/_{j=1}^{n}C_j$ induced by $Z$ is isomorphic to $G$.
\end{claim}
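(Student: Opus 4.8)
The plan is to mirror the proof of Claim~\ref{claim3} and the body of Theorem~\ref{theorem_6} simultaneously, splitting the arcs of $G$ into the asynchronous arcs supplied by the Cartesian matrix subgraphs $G_M$ and the synchronous arcs supplied by the semicomplete bipartite matrix subgraphs $G_B$, and handling each family by the reconstruction mechanism already established for it. First I would check that $\phi$ is a bijection from $V(G)$ onto $Z$. By hypothesis every row of $G$ containing $v_{i,j}$ carries the index $i$ and every column containing $v_{i,j}$ carries the index $j$, so $R_i$ is exactly the set of vertices with first index $i$ and $C_j$ is exactly the set of vertices with second index $j$; in particular the $R_i$ are pairwise disjoint and the $C_j$ are pairwise disjoint. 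Hence $i\neq i'$ forces $\tilde{x}'_i\neq\tilde{x}'_{i'}$ and $j\neq j'$ forces $\tilde{x}''_j\neq\tilde{x}''_{j'}$, exactly as in Theorem~\ref{theorem_6}, so $\phi(v_{i,j})=(\tilde{x}'_i,\tilde{x}''_j)$ is injective and, $Z$ being its image by definition, a bijection onto $Z$. It then remains to show that $\phi$ preserves arcs and their labels, and here I treat the two arc families separately.

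For the arcs of the Cartesian matrix subgraphs I would argue as in Theorem~\ref{theorem_6}. A row arc $v_{i,j}v_{i,k}$ has both ends in $R_i$, so it becomes a loop and is deleted under the row-contractions, surviving only in $G/_{j=1}^{n}C_j$ as $\tilde{x}''_j\tilde{x}''_k$; dually a column arc $v_{i,j}v_{k,j}$ survives only in $G/_{i=1}^{m}R_i$ as $\tilde{x}'_i\tilde{x}'_k$. Since, by the definition of a Cartesian matrix graph together with the hypothesis that the constituent subgraphs share no labels, the row-labels and column-labels are pairwise disjoint, these arcs are asynchronous, and the definition of the Cartesian product produces for each of them the corresponding arc between the appropriate vertices of $Z$ carrying the same label.

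For the arcs of the semicomplete bipartite matrix subgraphs I would argue as in Claim~\ref{claim3}. Each such subgraph is arc-induced by a single label, and by the matrix numbering together with the two hypotheses forbidding a $G_B$-arc $v_{i,j}v_{i,k}$ or $v_{i,j}v_{k,j}$ from lying inside a Cartesian block, every bipartite arc $v_{i,j}v_{k,l}$ changes both indices, i.e.\ $i\neq k$ and $j\neq l$. Consequently it survives the row-contractions as an arc $\tilde{x}'_i\tilde{x}'_k$ of $G/_{i=1}^{m}R_i$ and the column-contractions as an arc $\tilde{x}''_j\tilde{x}''_l$ of $G/_{j=1}^{n}C_j$ with the same label; these form a synchronising pair and are combined, in $G/_{i=1}^{m}R_i\boxtimes G/_{j=1}^{n}C_j$, into a single arc between $(\tilde{x}'_i,\tilde{x}''_j)$ and $(\tilde{x}'_k,\tilde{x}''_l)$, both of which lie in $Z$, again with the correct label.

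Finally I would invoke the hypothesis that $A(G)$ consists solely of these two families, so the correspondence above exhausts the arcs of the subgraph induced by $Z$, while the acyclicity of $G$ guarantees that no further arcs are induced on $Z$, exactly as in Theorem~\ref{theorem_6}; the correspondence being one-to-one and label-preserving on both vertices and arcs then yields the claimed isomorphism. I expect the main obstacle to be the step asserting that every bipartite arc changes both indices while every Cartesian arc stays within a single row or column: this is precisely what makes the synchronising mechanism for $G_B$ and the asynchronous mechanism for $G_M$ compatible inside one product, and it is where the label-disjointness hypotheses and the ban on $G_B$-arcs lying along a row or column of a Cartesian block must be used with care.
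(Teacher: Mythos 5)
Your proposal is correct and follows essentially the same route as the paper's own proof: it first establishes that the row- and column-contractions send distinct indices to distinct vertices so that $\phi$ is a bijection onto $Z$, and then verifies arc- and label-preservation by treating the asynchronous row/column arcs of the Cartesian matrix subgraphs and the synchronous arcs of the semicomplete bipartite matrix subgraphs separately, exactly as the paper does. If anything, you are slightly more explicit than the paper about why every bipartite arc must change both indices (so that it survives both contractions and forms a synchronising pair), which is a point the paper's proof passes over quickly.
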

\begin{proof}
We start with proving that $\tilde{x}'_i$ and $\tilde{x}'_j, i\neq j,$ implies $\tilde{x}'_i\neq \tilde{x}'_j$ and that $\tilde{x}''_i$ and $\tilde{x}''_j, i\neq j,$ implies $\tilde{x}''_i\neq \tilde{x}'_j$.
Because $R_i$ is the row with all vertices $v_{i,k}$ of $V(G)$ (by hypothesis) the vertices of $R_i$ are replaced by $\tilde{x}'_i$ and $R_j$ is the row with all vertices $v_{j,k}$ of $V(G)$ (by hypothesis) the vertices of $R_j$ are replaced by $\tilde{x}'_j$, and $R_i\cap R_j=\emptyset, i\neq j$, $\tilde{x}'_i$ and $\tilde{x}'_j, i\neq j,$ implies $\tilde{x}'_i\neq \tilde{x}'_j$. 
Likewise, Because $C_i$ is the column with all vertices $v_{i,k}$ of $V(G)$ (by hypothesis) the vertices of $C_i$ are replaced by $\tilde{x}'_i$ and $C_j$ is the column with all vertices $v_{j,k}$ of $V(G)$ (by hypothesis) the vertices of $C_j$ are replaced by $\tilde{x}''_j$, and $C_i\cap C_j=\emptyset$, $\tilde{x}''_i$ and $\tilde{x}''_j, i\neq j,$ implies $\tilde{x}''_i\neq \tilde{x}''_j$ . 
Next, because all vertices $v_{i,j}$ are replaced by $\tilde{x}'_i$ by $G/_{i=1}^{m}R_i$ and all vertices $v_{i,j}$ are replaced by $\tilde{x}''_j$ by $G/_{j=1}^{n}C_j$, it follows that $G/_{i=1}^{m}R_i\boxtimes G/_{j=1}^{n}C_j$ contains $(\tilde{x}'_i,\tilde{x}''_j)$ with as a result that er is a one-to-one correspondence between $v_{i,j}$ and $(\tilde{x}'_i,\tilde{x}''_j)$.
It follows that $\phi:V(G)\rightarrow Z$ is a bijection.
It remains to show that this bijection preserves the arcs and their labels. 
By hypothesis, the arcs of the rows of the subgraphs of $G_M$ are asynchronous with respect to the arcs of the columns of the subgraphs of $G_M$ and the arcs of the subgraphs of $G_M$ are asynchronous with respect to the arcs of the subgraphs of $G_B$.
For each arc $a$ of $G$ with $\mu(a)=(v_{i,j},v_{k,l})$, $i\neq k$, there is an arc $b$ of $G/_{i=1}^mR_i$ with $\mu(b)=(\tilde{x}'_i,\tilde{x}'_k)$ and $\lambda(a)=\lambda(b)$ and for each arc $c$ of $G$ with $\mu(c)=(v_{i,j},v_{k,l})$, $j\neq l$, there is an arc $d$ of $G/_{j=1}^nC_j$  with $\mu(d)=(\tilde{x}''_j,\tilde{x}''_l)$ and $\lambda(c)=\lambda(d)$.
Because the arcs of each subgraph $G_{B_x}$ of $G_B$ are synchronous arcs, we have that if $a$ is a synchronous arc of $G_{B_x}$ then $G/_{i=1}^{m}R_i\boxtimes G/_{j=1}^{n}C_j$ contains an arc $d$ with $\mu(d)=((\tilde{x}'_i,\tilde{x}''_j),(\tilde{x}'_k,\tilde{x}''_l))$  and $\lambda(a)=\lambda(d)$.
Because the arcs of the rows of each subgraph $G_{M_x}$ of $G_M$ are asynchronous arcs with respect to the arcs of the columns of $G_{M_x}$ (and vice versa), we have that if $a$ is such an asynchronous arc of a subgraph of $G_M$ then $G/_{i=1}^{m}R_i\boxtimes G/_{j=1}^{n}C_j$ contains arcs $d$ with $\mu(d)=((\tilde{x}'_i,\tilde{x}''_k),(\tilde{x}'_j,\tilde{x}''_k))$  and $\lambda(a)=\lambda(d)$ or arcs $d$ with $\mu(d)=((\tilde{x}'_i,\tilde{x}''_k),(\tilde{x}'_i,\tilde{x}''_l))$  and $\lambda(a)=\lambda(d)$ 
Because $G$ consists of subgraphs of $G_M$ and $G_B$ only, there are no other arcs $a$ of $G$.
Therefore, the subgraph of $G/_{i=1}^{m}R_i\boxtimes G/_{j=1}^{n}C_j$ induced by $Z$ is isomorphic to $G$.
\end{proof}
We continue with the proof of Theorem~\ref{theorem_7}. 
It remains to show that all other vertices of $G/_{i=1}^{m}R_i\boxtimes G/_{j=1}^{n}C_j$, except for the vertices of $Z$, disappear from $G/_{i=1}^{m}R_i\boxtimes G/_{j=1}^{n}C_j$.
First, we observe that all vertices of $Z$ are of the type $(\tilde{x}'_i,\tilde{x}''_j)$.
Therefore, it suffices to show that vertices of the types $(\tilde{x}'_i,v_j)$, $(v_i,\tilde{x}''_i)$ and $(v_i,v_j)$ do not exist in $G/_{i=1}^{m}R_i\boxtimes G/_{j=1}^{n}C_j$ and the vertices $(\tilde{x}'_i, \tilde{x}''_j)$ of $G/_{i=1}^{m}R_i\boxtimes G/_{j=1}^{n}C_j$ that are not in $Z$ will disappear from $G/_{i=1}^{m}R_i\boxtimes G/_{j=1}^{n}C_j$.
Because all vertices $v_{i,j}$ of $G$ are in $R_{i}$, the set of vertices $\{v_{i,j}\}$ is replaced by the vertex $\tilde{x}'_i$ and therefore $v_{i,j}$ does not exist in $G/_{i=1}^{m}R_i$ and all vertices $v_{i,j}$ of $G$ are in $C_{j}$, the set of vertices of $v_{i,j}$ is replaced by the vertex $\tilde{x}''_j$ and therefore $v_{i,j}$ does not exist in $G/_{i=1}^{n}C_i$.
Hence, by definition of the Cartesian product, vertices of the types $(\tilde{x}'_i,v_j)$, $(v_i,\tilde{x}''_i)$ and $(v_i,v_j)$ do not exist in $G/_{i=1}^{m}R_i\boxtimes G/_{j=1}^{n}C_j$.
By definition of the VRSP, if a vertex $(\tilde{x}'_i, \tilde{x}''_j)\notin Z$ has $level~0$ in $G/_{i=1}^{m}R_i\boxtimes G/_{j=1}^{n}C_j$, $(\tilde{x}'_i, \tilde{x}''_j)$ is removed from $G/_{i=1}^{m}R_i\boxtimes G/_{j=1}^{n}C_j$. 
This followes directly from $\phi$ mapping the source of $G$ into the source of the graph induced by $Z$.
Therefore, assume $(\tilde{x}'_k, \tilde{x}''_l)\notin Z$ has $level > 0$ in $G/_{i=1}^{m}R_i\boxtimes G/_{j=1}^{n}C_j$.
For a vertex  $(\tilde{x}'_k, \tilde{x}''_l)\notin Z$ to have level$>0$ in $G/_{i=1}^{m}R_i\boxtimes G/_{j=1}^{n}C_j$ there must be an arc $a$ in $G/_{i=1}^{m}R_i\boxtimes G/_{j=1}^{n}C_j$ with $\mu(a)=((\tilde{x}'_i, \tilde{x}''_j),(\tilde{x}'_k, \tilde{x}''_l))$ with either $(\tilde{x}'_i, \tilde{x}''_j)\in Z$ or $(\tilde{x}'_i, \tilde{x}''_j)\notin Z$.
In the case that $(\tilde{x}'_i, \tilde{x}''_j)\notin Z$ we can recursively backtrack the paths until we reach a vertex $(\tilde{x}'_i, \tilde{x}''_j)\in Z$ or we reach a vertex $(\tilde{x}'_i, \tilde{x}''_j)\notin Z$ with $level~0$.
In the former case, the arc $a$ cannot exist, because otherwise $a$ corresponds to an arc $b$ in $A(G_{M_x})$ or $a$ corresponds to an arc $b$ in $A(G_{B_x})$ with $\mu(b)=(v_{i,j},v_{k,l})$ and $\lambda(a)=\lambda(b)$. 
But such an arc $b$ cannot exist because for such an arc $b$ we have that there exists an arc $c$ in  $G/_{i=1}^{m}R_i$ with $\mu(c)=(\tilde{x}'_i,\tilde{x}_k')$ and $\lambda(b)=\lambda(c)$ and there exists an arc $d$ in  $G/_{j=1}^{n}C_j$ with $\mu(d)=(\tilde{x}''_j,\tilde{x_l}'')$ and $\lambda(b)=\lambda(c)=\lambda(d)$. 
Therefore, there exists an arc $e$ with $\mu(e) = ((\tilde{x}'_i,\tilde{x}''_j), (\tilde{x}'_k,\tilde{x}''_l))$ and $\lambda(e)=\lambda(a)$ in the graph induced by $Z$.
This contradicts the assumption $(\tilde{x}'_i, \tilde{x}''_j)\notin Z$.
In the latter case, the vertex $(\tilde{x}'_i, \tilde{x}''_j)$ is removed from $G/_{i=1}^{m}R_i\boxtimes G/_{j=1}^{n}C_j$ together with the arc $a$ with $\mu(a)=((\tilde{x}'_i, \tilde{x}''_j),(\tilde{x}'_{i_1}, \tilde{x}''_{j_1}))$, recursively, until the arc $a'$ with $\mu(a')=((\tilde{x}'_{i_n}, \tilde{x}''_{j_n}),(\tilde{x}'_{k}, \tilde{x}''_{l}))$ is removed.
This completes the proof of Theorem~\ref{theorem_7}. 
\end{proof}
\section{Future work}
In this paper, we believe that we have supplied all ingredients with which we can decompose a labelled acyclic directed multigraph with respect to the VRSP.
Based on Theorems~\ref{theorem_1},~\ref{theorem_2},~\ref{theorem_5},~\ref{theorem_6}~and~\ref{theorem_7}  we believe that graphs that cannot be decomposed by any of these theorems must be a prime-graph with respect to the VRSP. 
But, this still has to be proved in future work.

\end{document}